\documentclass[11pt]{amsart}
\usepackage[color=blue!20!white,textsize=tiny]{todonotes}

\usepackage{amsmath} 
\usepackage{amsthm}
\usepackage{amssymb} 
\usepackage{amsfonts}
\usepackage{tikz-cd}
\usepackage{graphicx}
\usepackage{graphics}

\usepackage{mathtools}
\usepackage{comment}
\usepackage{tikz}
\usepackage{subfigure}
\usepackage[pagebackref=true]{hyperref}
\usepackage{soul}
\usepackage{float}

\newtheorem{theorem}{Theorem}[section]
\newtheorem{lemma}[theorem]{Lemma}
\newtheorem{proposition}[theorem]{Proposition}
\newtheorem{question}[theorem]{Question}

\newtheorem{corollary}[theorem]{Corollary}

\theoremstyle{definition}
\newtheorem{definition}[theorem]{Definition}
\newtheorem{example}[theorem]{Example}

\newcommand{\into}{\hookrightarrow}

\newcommand{\gru}{\mathrm{gr}_U}
\newcommand{\grv}{\mathrm{gr}_V}

\newcommand{\D}{\mathbb{D}}
\newcommand{\F}{\mathbb{F}}

\newcommand{\R}{\mathbb{R}}
\newcommand{\Z}{\mathbb{Z}}

\newcommand{\cF}{\mathcal{F}}
\newcommand{\cH}{\mathcal{H}}

\newcommand{\cR}{\mathcal{R}}

\newcommand{\cZ}{\mathcal{Z}}

\newcommand\alphas{\boldsymbol\alpha}
\newcommand\betas{\boldsymbol\beta}

\newcommand{\spinc}{\mathrm{Spin}^{c}}
\newcommand{\Sg}{\Sigma_{g}}

\newcommand{\Ta}{\mathbb{T}_{\alpha}}
\newcommand{\Tb}{\mathbb{T}_{\beta}}

\newcommand{\x}{\mathbf{x}}
\newcommand{\y}{\mathbf{y}}

\def\s{\mathfrak s}

\def\CF {\mathit{CF}}
\def\HF {\mathit{HF}}
\newcommand \HFhat{\widehat{\HF}}
\newcommand \CFhat{\widehat{\CF}}

\newcommand{\CFKm}{CFK^{-}}

\newcommand{\CFKhat}{\widehat{CFK}}

\newcommand{\HFKhat}{\widehat{HFK}}

\newcommand{\CFKR}{CFK_{\cR}}

\newcommand{\Hee}{\mathcal{H}}

\newcommand{\Sbar}{\overline{\Sigma}_{g}}
\newcommand{\muk}{K^{*}_{1/n}}

\newcommand{\Dmu}{D_{+}(K^{*}_{1/n})}
\newcommand{\dW}{S^{3}_{1/n}(K)}
\newcommand{\dWxI}{S^{3}_{1/n}(K)\times I}

\newcommand{\del}{\partial}
\newcommand \sthree {S^{3}}

\title{Deeply slice knot detection via immersed curves}

\author[R. McConkey]{Rob McConkey}
\address{Department of Mathematics and Physics, Colorado State University Pueblo, 2200 Bonforte Blvd, Pueblo, CO 81001}
\email{rob.mcconkey@csupueblo.edu}

\author[C. St. Clair]{Christopher St. Clair}
\address {Department of Mathematics, Saginaw Valley State University, University Center, MI 48710}
\email{cdstclai@svsu.edu}

\author[T. Wells]{Tristan Wells}
\address {Department of Mathematics, Louisiana State University, Baton Rouge, LA 70803}
\email{trwells22@lsu.edu}

\author[C. Zhang]{Chen Zhang}
\address {Simons Center for Geometry and Physics, State University of New York, Stony Brook, NY 11794}
\email{czhang@scgp.stonybrook.edu}

\begin{document}
\vspace*{-1cm}
\maketitle
\vspace*{-0.4cm}

\begin{abstract}
On the Kirby list, Akbulut poses the question of whether there exists a homology 3-sphere $Y$, other than $\sthree$, with the following property: Any knot $K$, representing $0\in\pi_{1}(Y),$ which is slice in some contractible 4-manifold $X$ which $Y$ bounds, is already slice in $Y\times[0,1]$. In this paper, we make progress on this question by producing a class of deeply slice knots. We construct these knots by first specifying a pair $(X, K)$, where $X$ is a contractible 4-manifold with integral homology 3-sphere boundary and $K$ is slice in $X$. Then, we show the knot is deeply slice using concordance invariants from Heegaard Floer homology. We employ immersed curve techniques to compute these invariants.
\end{abstract}

\tableofcontents

\section{Introduction}

\textit{Deeply slice} knots first appeared in \cite{klug2021deep} as part of the search for pairs of exotic 4-manifolds for the purposes of attacking the Smooth 4-Dimensional Poincar\'{e} Conjecture, which claims that every smooth 4-manifold that is homotopy equivalent to the 4-sphere is also diffeomorphic to the standard 4-sphere. In particular the 4D Poincar\'{e} conjecture would be false if there was a knot $K$ in $\sthree$, where $\sthree$ is the boundary of some exotic homotopy 4-ball $X$ such that $K$ is \textit{slice} in $X$ but the requisite smoothly embedded disk $\D\into X$ whose boundary is $K$ cannot be found in a collar neighborhood $\del X \times I$. Recall that if $X$ is a smooth 4-manifold with boundary $Y$, we say a knot $K\subset Y$ is \textit{smoothly slice}, or simply \textit{slice in $X$}, if there exists a smoothly embedded disk $\D\into X$ such that $K=\del\D\subset\del X=Y.$ Equivalently, a knot is slice if it is \textit{concordant} to the unknot. A schematic of a slice disk as well as a trivial example is shown in Figure \ref{fig:sliceexample}. Sliceness is an extremely active topic for study when considering knots in $\sthree$ thought of as the boundary of the standard smooth 4-ball, $B^{4}$. When not specified, when we say ``$K$ is slice,'' we mean that $K\subset \sthree$ is slice in $B^{4}$. Klug and Ruppik introduced a more nuanced property called \textit{deeply slice} in \cite{klug2021deep}.

\begin{figure}
    \centering
    \includegraphics[width = .8\linewidth]{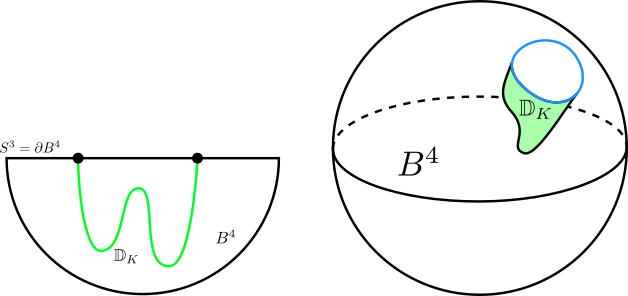}
    \caption{Left: A half-dimensional schematic of a slice disk for some knot $K$. Right: A slice disk schematic for the unknot, pushed into $B^{4}.$}
    \label{fig:sliceexample}
\end{figure}

\begin{definition}[Deeply Slice, \cite{klug2021deep}]\label{def:deeplyslice}
    A knot $K\subset\del X$ is \textit{deeply slice} in $X$ if it is slice in $X$ but $K$ is not slice in $\del X\times I$.
\end{definition}

In other words, none of the slice disks for $K$ live in $\del X \times I$ the collar neighborhood of the boundary. In essence, this means that the slice disk is ``interesting'' because it requires one to dig ``deep'' into the 4-manifold $X$. If a knot is slice but not deeply slice, we call it \textit{shallow slice}. 

While the Smooth 4-Dimensional Poincar\'{e} Conjecture motivated this notion, Klug and Ruppik used it to make progress on a question on the Kirby list attributed to Akbulut: 

\begin{question}[Kirby list 1.95, \cite{kirby1997problems}]\label{q:1.95}
    Does there exist a homology 3-sphere $Y$, other than $\sthree$, with the following property: Any knot $K$, representing $0\in\pi_{1}(Y),$ which is slice in some contractible 4-manifold $X$ which $Y$ bounds, is already slice in $Y\times[0,1]$?
\end{question} 

The negation of Question \ref{q:1.95} can be phrased as: Does every homology 3-sphere bound a contractible, smooth 4-manifold which contains a deeply slice knot that is null-homotopic in the boundary? The relevant result to this paper is the following theorem.

\begin{theorem}[\cite{klug2021deep}]
    Every 2-handlebody $X$ contains a null-homotopic deeply slice knot in its boundary.
\end{theorem}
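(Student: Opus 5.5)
Let $X$ be a 2-handlebody, i.e. $X$ is built from $B^{4}$ by attaching 2-handles along a framed link $L=L_1\cup\dots\cup L_n\subset \sthree$ (allowing $1$-handles as well only adds bookkeeping), and let $Y=\del X$. The plan is to find a knot $K\subset Y$ in three steps: first build a slice disk for $K$ in $X$ that passes over a 2-handle; then check that $K$ is null-homotopic in $Y$; finally show that $K$ bounds no disk in $Y\times I$ by a Heegaard Floer concordance invariant that vanishes on knots slice in $Y\times I$.

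For the construction, take a cocore $C$ of one 2-handle $h$, say the one attached along $L_1$. Its boundary is a meridian $\mu$ of $L_1$, viewed as a knot in $Y$. The cocore is an embedded disk, but $\mu$ is typically not null-homotopic in $Y$. To repair this, replace $\mu$ by a knot $K$ obtained from $\mu$ (or from a parallel pair of cocore boundaries with opposite orientations) by tube/band moves. These moves are chosen so that:
\begin{itemize}
\item[(i)] the resulting surface is still an embedded disk $\D\subset X$; for example, band two oppositely oriented parallel copies of $C$ along an arc in $Y$, or take a Whitehead-double-type satellite of $\mu$ that bounds a disk built from the cocore;
\item[(ii)] $[K]=0$ in $\pi_1(Y)$, which holds when $K$ is a commutator-type or winding-number-zero pattern around $\mu$, so it is null-homotopic.
\end{itemize}
A cleaner alternative is to take $K=\mu\#\overline{\mu}'$, built from parallel cocores with a band, arranged so that the band twists around $L_1$ enough to make the result homotopically trivial while keeping the disk embedded.

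For deep sliceness I would use an invariant of knots in a homology sphere, or more generally of null-homologous knots in $Y$, that behaves under concordance in $Y\times I$. Examples are the $\tau$-type invariant from knot Floer homology of $(Y,K)$, or the $d$-invariants of surgeries $Y_{\pm1}(K)$. If $K$ were slice in $Y\times I$, then $Y_{1}(K)$ would be homology cobordant to $Y$, so $d(Y_1(K))=d(Y)$. In $X$, on the other hand, the disk passes geometrically through the handle, and we can arrange $K$ (via the twisting parameter in the band) so that surgery changes $d$. Concretely, I would compute $d(Y_{1}(K))$ through the surgery formula and force the knot's $V_0$ to be positive by making the pattern locally contain, say, a positive trefoil-like clasp. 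A local trefoil summand is concordant-nontrivial in $Y\times I$, yet it can be undone in $X$ by sliding over $h$.

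The main obstacle is this last compatibility: the knot must be slice in $X$ using the handle, and simultaneously carry a nonvanishing obstruction in $Y\times I$. I would handle it by choosing $K$ as the result of a handle-slide-type modification. Start from a nontrivial knot $J\#$ that becomes unknotted after a band move over $h$; then use the sliding to exhibit the disk in $X$, and use $V_0(J)>0$, together with additivity or monotonicity under the satellite operation, to obstruct sliceness in the collar.
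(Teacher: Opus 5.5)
The paper does not prove this statement. It quotes it from \cite{klug2021deep} and only records that the argument splits according to whether $\partial X\cong S^3$ or $\pi_1(\partial X)\neq 1$, and that it uses the topology of $X$ rather than Floer invariants.

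\textbf{The construction step.} This part of your proposal is sound in its Whitehead-double form. Let $\mu$ be the belt circle of a $2$-handle and form $K=D_+(\mu)$ with the cocore framing. Then $K$ is null-homotopic, because its pattern has winding number zero in $\nu(\mu)$, whose $\pi_1$ is $\Z$. It also bounds the Whitehead double of the cocore, as in Corollary~\ref{cor:dualslice}. The ``commutator-type'' band sums, by contrast, need not be null-homotopic once the band leaves $\nu(\mu)$.

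\textbf{The obstruction step.} The gap is the step you yourself call the main obstacle, and the mechanism you propose does not work.
\begin{itemize}
\item Sliding over $h$ is an isotopy of the knot in $Y=\partial X$, so it cannot undo a local trefoil summand.
\item Whether (and in which class) such a knot bounds a disk in $X$ depends on $X$. For example, in $\overline{\mathbb{CP}^2}\setminus B^4$ a right-handed trefoil bounds no null-homologous disk, by the bound $2\tau(K)+|[\Delta]|+[\Delta]^2\le 2g$ for negative definite fillings of $S^3$.
\item Nothing in your proposal produces a knot that is both slice in $X$ and has a nonvanishing invariant. ``Additivity or monotonicity of $V_0$ under satellites'' in an arbitrary $Y$ is not an available result.
\item $d(Y_{1}(K))$ and $V_0$ are set up for (rational) homology spheres, while $\partial X$ can have $b_1>0$ (a $0$-framed unknot gives $S^1\times S^2$).
\item Floer invariants of Whitehead doubles genuinely vanish for many companions: in $S^3$, $\tau(D_+(J))=0$ whenever $\tau(J)\le 0$. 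This is why the present paper must impose hypotheses on the companion in Theorem~\ref{thm:deepslice} to get distinct $\tau_\alpha$.
\end{itemize}
So a Floer-theoretic obstruction should not be expected to work uniformly over all $2$-handlebodies.

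\textbf{What is missing: the case $\pi_1(Y)\neq 1$.} You need the case split and an obstruction that sees $\pi_1$. Since $\pi_1(Y)$ is normally generated by the meridians of the attaching circles, some belt circle $\mu$ is homotopically essential; take $K=D_+(\mu)$ as above. One way to obstruct sliceness in the collar is by covering spaces. A slice disk for $K$ in $Y\times I$ would lift to pairwise disjoint disks in $\widetilde Y\times I$, bounding the lifts of $K$ to the universal cover. Inside a component of the preimage of $\nu(\mu)$, the lifts form a chain in which consecutive lifts clasp. Each lift bounds a disk in the lifted tube, so their linking number in $\widetilde Y$ is defined, and it equals $\pm1$ (or $\pm2$ if $[\mu]$ has order two). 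Knots bounding disjoint disks in $\widetilde Y\times I$ have linking number $0$, a contradiction.

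\textbf{What is missing: the case $\partial X\cong S^3$.} Here belt-circle constructions can fail outright. For $X=\mathbb{CP}^2\setminus B^4$ the belt circle is an unknot, so $D_+(\mu)$ is unknotted. You need a separate construction of a knot in $S^3$ that is not slice in $B^4$ but bounds a disk in $X$ running over the $2$-handles.

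\textbf{A remark on $1$-handles.} Allowing $1$-handles is not mere bookkeeping. A cancelling $1$/$2$-handle pair gives $B^4$, in which no knot is deep slice, so the statement must be read for $X=B^4\cup(\text{at least one }2\text{-handle})$.
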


Instead of starting with candidate 4-manifolds and seeking deeply slice knots, our main theorem starts with a slice knot and constructs a 4-manifold:

\begin{theorem}
    Associated to a non-trivial slice knot (with some constraints) and any $n\in \Z_{>0}$, there is a contractible, smooth 4-manifold that contains a deeply slice knot which is null-homotopic in its boundary.
\end{theorem}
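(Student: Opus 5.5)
Given a nontrivial slice knot $K\subset \sthree$ with slice disk $D\subset B^4$ and $n\in\Z_{>0}$, the first step is to build the contractible manifold. I would take $X$ to be $B^4$ with a $(-1/n)$- or $1/n$-surgery performed along a suitable curve, realized by a Mazur-type cork twist or a Rolfsen twist construction. The macros $\dW$, $\muk$ and $\Dmu$ suggest the precise form. The boundary should be $\partial X=\dW$, an integral homology sphere bounding a contractible manifold. The candidate deeply slice knot is $\muk$, the image in $\dW$ of a meridian-type curve of $K$, or equivalently the dual knot of the surgery. Concretely, I would view $X$ as the complement of a neighborhood of $D$ in $B^4$ with a 2-handle reattached, or alternatively as the result of a logarithmic-transform-style modification of $B^4$ along $D\times D^2$. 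Contractibility follows from a Mayer--Vietoris and van Kampen computation, using that $K$ is slice, so that the disk complement has $H_*$ of a circle.

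Next I would show that $\muk$ is slice in $X$ by exhibiting the disk explicitly. It is the core or cocore of the added handle, or the pushed-in copy of $D$ capped off. I would also check that $\muk$ is null-homotopic in $\dW$. The knot is homologically trivial because $\dW$ is a homology sphere. For null-homotopy, I would choose the knot as a commutator-type curve, for instance a band sum of the dual knot with its reverse, or a Whitehead-double-like satellite $\Dmu$ as in the macro. The positive Whitehead double is null-homotopic whenever the companion is, and Whitehead doubling preserves sliceness, since the double of a slice knot is slice by doubling the disk. So the final deeply slice candidate is $\Dmu\subset\dW$.

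The main step is to show that $\Dmu$ is not slice in $\dWxI$. If it were, it would be concordant in $\dWxI$ to the unknot, and any Heegaard Floer concordance invariant of knots in homology spheres would vanish. Such invariants include $\tau$, or the $\mathbb{F}[U,V]$-module invariants such as $\varphi$ or $\nu^+$-type invariants computed from $\CFKR$. I would therefore compute $\tau(\Dmu)$, or the $d$-invariant of a surgery on it, and show that it is nonzero. To do this I would first compute the knot Floer complex of $\muk\subset\dW$ with immersed curves. The complement of $\muk$ is the complement of $K$ glued to a solid torus, so its immersed curve is obtained from $\HFhat$ of the complement of $K$ via a reparametrization. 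This is where the constraints on $K$ enter, for example requiring $K$ to have a nontrivial $\tau$-like or $\epsilon$-invariant detectable after $1/n$ filling, since slice knots have $\tau=0$ and we need a subtler invariant. Then I would apply the Hanselman--Watson immersed-curve satellite formula for the $(1,1)$-pattern of the Whitehead double. This pairs the curve of $\muk$ with the pattern's doubly pointed diagram and reads off $\tau$ or the relevant invariant, aiming for $\tau(\Dmu)=1$.

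The main obstacle is this final computation. It requires tracking how the immersed curve of a slice, yet Floer-nontrivial, knot $K$ transforms under $1/n$-filling, and ensuring that the resulting $\muk$ has an invariant, such as $\epsilon\neq0$, that forces $\tau(\Dmu)=1$. In the homology-sphere setting, $\tau$ must be normalized with respect to $d(\dW)$, and the constraints on $K$ in the statement are presumably exactly what guarantees that this nonvanishing holds.
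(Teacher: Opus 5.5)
Your construction of $W$ (delete $\nu(D)$ from $B^4$ and attach a $-n$-framed 2-handle along a meridian of $K$) matches the paper. So do the slice disk for $\muk$ coming from the 2-handle and the passage to $\Dmu$. The gap is in the obstruction. Every invariant you propose is an invariant of homology concordance: it depends only on the local equivalence class of the knot Floer complex. This covers the tower-type $\tau$ normalized by $d(\dW)$, $\nu^+$, $\varphi_i$, $d$-invariants of surgeries on $\Dmu$, and $\tau$ or $\epsilon$ of $\muk$.

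Now delete a small ball about an interior point of the slice disk for $\Dmu$ in the contractible $W$. What remains is a homology cobordism from $S^3$ to $\dW$ containing an annulus from the unknot to $\Dmu$, and the same holds for $\muk$. So all of these invariants take their unknot values, e.g.\ $\tau(\muk)=\epsilon(\muk)=\tau(\Dmu)=0$. No hypothesis on $K$ can make them nonzero. This is consistent with the immersed-curve picture: slice $K$ has $\epsilon(K)=0$, so the essential component of its curve is the horizontal line, and it stays horizontal for $\muk$, whose longitude is still $\lambda_K$. An obstruction to sliceness in $\dWxI$ must be one that the slice disk in $W$ does not kill.

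The missing idea is to use the whole Hedden--Raoux family $\tau_\alpha(\dW,\Dmu)$, one for each $0\neq\alpha\in\HFhat(\dW)$. The product $\dWxI$ induces the identity on $\HFhat$, so a slice disk there forces $\tau_\alpha=0$ for \emph{every} $\alpha$. A slice disk in $W$ only constrains the class in the image of $W$'s cobordism map. The extra classes come from the non-essential part of $K$'s curve, and this is the real role of the paper's hypothesis. An acyclic summand of $\CFKR(K)$ with a simplified basis gives a closed ``figure 8'' component of the immersed curve. Pairing it with the Whitehead diagram skewed to slope $-1/n$ leaves two generators on the $E_\infty$ page with different Alexander gradings, so some $\tau_\alpha\neq 0$.

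Separately, your null-homotopy argument needs repair. $\muk$ is isotopic to a pushoff of $\lambda_K$, which is nontrivial in $\pi_1(\dW)$ by Property P, so $\muk$ is not null-homotopic and your ``whenever the companion is'' clause does not apply. $\Dmu$ is null-homotopic because the Whitehead pattern is already null-homotopic in $S^1\times D^2$.
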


See Theorem \ref{thm:deepslice} for a technical version, which imposes some further restrictions on the slice knot. The proof of the main theorem has two steps. First, we construct a 4-manifold and a knot with the desired properties, then show that said knot is not slice in a collar neighborhood of the boundary. The novel component to the proof is in the second step, in which we utilize the $\tau_{\alpha}$-invariants developed in \cite{heddenraoux}, generalizing the $\tau$-invariant coming from knot Floer homology. Similar to the $\tau$-invariant, these invariants obstruct sliceness. Geometrically, $\tau_{\alpha}(Y,K)$ bounds the genus of surfaces with boundary $K$ in 4-manifolds with boundary $Y$. In order to compute these, we turn to more recent technology in \textit{bordered Floer theory}. In particular, the \textit{immersed curves} perspective on bordered invariants, developed in \cite{hanselman2023bordered} serves as the perfect visual tool to compute $\tau_{\alpha}$. In this paper, we refine a \textit{pairing theorem} from \cite{chenhanselman} to fully leverage their \textit{immersed Heegaard Floer homology}.

The magic of obstructing sliceness using $\tau_{\alpha}$ is that the invariant depends entirely on the knot and the 3-manifold. Therefore, we have agency in constructing a suitable 4-manifold with the needed topological properties like contractibility. We use this freedom along with a result of Gordon to construct a slice knot in a contractible 4-manifold with boundary an integral homology 3-sphere. This differs from the approach of Klug and Ruppik, who had to operate in a setting with prescribed 4-manifolds (such as 2-handlebodies) and then seek knots with the right sliceness properties. 

Ultimately, the 3-manifold we consider is the $1/n$-surgery (described below) on the provided slice knot, and the knot we consider is the Whitehead double of the dual knot to that surgery. Performing $1/n$-surgery guarantees that the resulting 3-manifold is an integral homology 3-sphere. We choose the 4-manifold so that the dual knot is slice in it, and Whitehead doubling preserves sliceness. The tricky component to the proof is ensuring that the immersed curves for the class of slice knots we consider behave appropriately, which boils down to a diagrammatic approach. 

\subsection{Organization}

Section \ref{section:topology} details the construction of the 4-manifold in the main theorem. Here, we also give Gordon's result and provide the slice knot in consideration for being deeply slice. In Section \ref{chap:HF}, we define $\tau_{\alpha}(Y,K)$. In Section \ref{sec:bordered} we introduce immersed curves, immersed Heegaard Floer theory, and our refined pairing theorems for the context of this paper. At the end of Section \ref{sec:bordered}, we provide example computations of immersed Heegaard diagrams and extract invariants from them. In Section \ref{sec:maintheorem}, we prove Theorem \ref{thm:deepslice}, the technical version of the main theorem.

\subsection{Acknowledgments} We thank Matthew Hedden for inspiration on the project as well as guidance in learning required skills. We thank Whenzhao Chen, Jake Rasmussen, Luke Seaton, Ivan So, Dean Spyropoulos, and Rithwik Vidyarthi for helpful conversations.

We also want to thank the National Science Foundation for supporting our research with grants DMS-2104664 supporting St. Clair, Wells, and Zhang, the RTG NSF Award DMS-2135960 supporting both McConkey and Wells who was also supported by DMS-1709016. And DMS-2304033 which provided partial support for McConkey. Their financial support made this research possible.

\section{Topological Preliminaries}\label{section:topology}

Dehn surgery and satellite knots are topological tools used throughout the paper that we review here.

\textit{Dehn surgery} is a method to construct a new 3-manifold from a given 3-manifold by cutting and pasting along a knot. In this paper, we focus on the case for knots in $S^3$. Formally, one excises a neighborhood of the given knot $K$, whose closure is homeomorphic to a solid torus, to obtain the knot complement, then reattaches the solid torus to the knot complement along their common boundary, a torus, via some homeomorphism of the torus $h:\sthree\setminus\nu(K)\to\del\,\overline{\nu(K)}.$ The result is a new, closed 3-manifold $\sthree_{p/q}(K):=\sthree\setminus\nu(K)\cup_{h}\del\,\overline{\nu(K)}$. Note that, since $K$ is in $S^3$, there is a natural choice of the meridian and longitude of the boundary of the knot complement, which we denote by $(\mu,\lambda)$. The pair $(p,q)$ describes the image of the meridian of the solid torus as $p\mu+q\lambda$, which uniquely determines the homeomorphism $h$.

A \textit{satellite operation} is a method of constructing a new knot from two given knots. Let $P$ be a knot embedded in the solid torus $S^{1}\times\D^{2}$ and $K$ an arbitrary knot in $\sthree$. Gluing $S^{1}\times\D^{2}$ to $\sthree\setminus\nu(K)$ along their common torus boundary yields a new knot in $S^{3}$, where the meridians of the tori are identified to each other and the longitude of $S^{1}\times\D^{2}$ is identified with the Seifert longitude of $K$. The resulting knot is called the \textit{satellite knot} $P(K)$, which is the image of $P$ under the gluing.  We call $P\subset S^{1}\times\D^{2}$ the \textit{pattern knot} and $K\subset\sthree$ the \textit{companion knot}. We pay special attention to the Whitehead pattern, denoted $D_{+}$. Here, the Whitehead pattern is the positively clasped unknot wrapped around the $S^{1}$ factor of the solid torus in which its embedded. See Figure \ref{fig:satelliteop} for the Whitehead double of the right-handed trefoil.

\begin{figure}
    \centering
    \includegraphics[width = 4in]{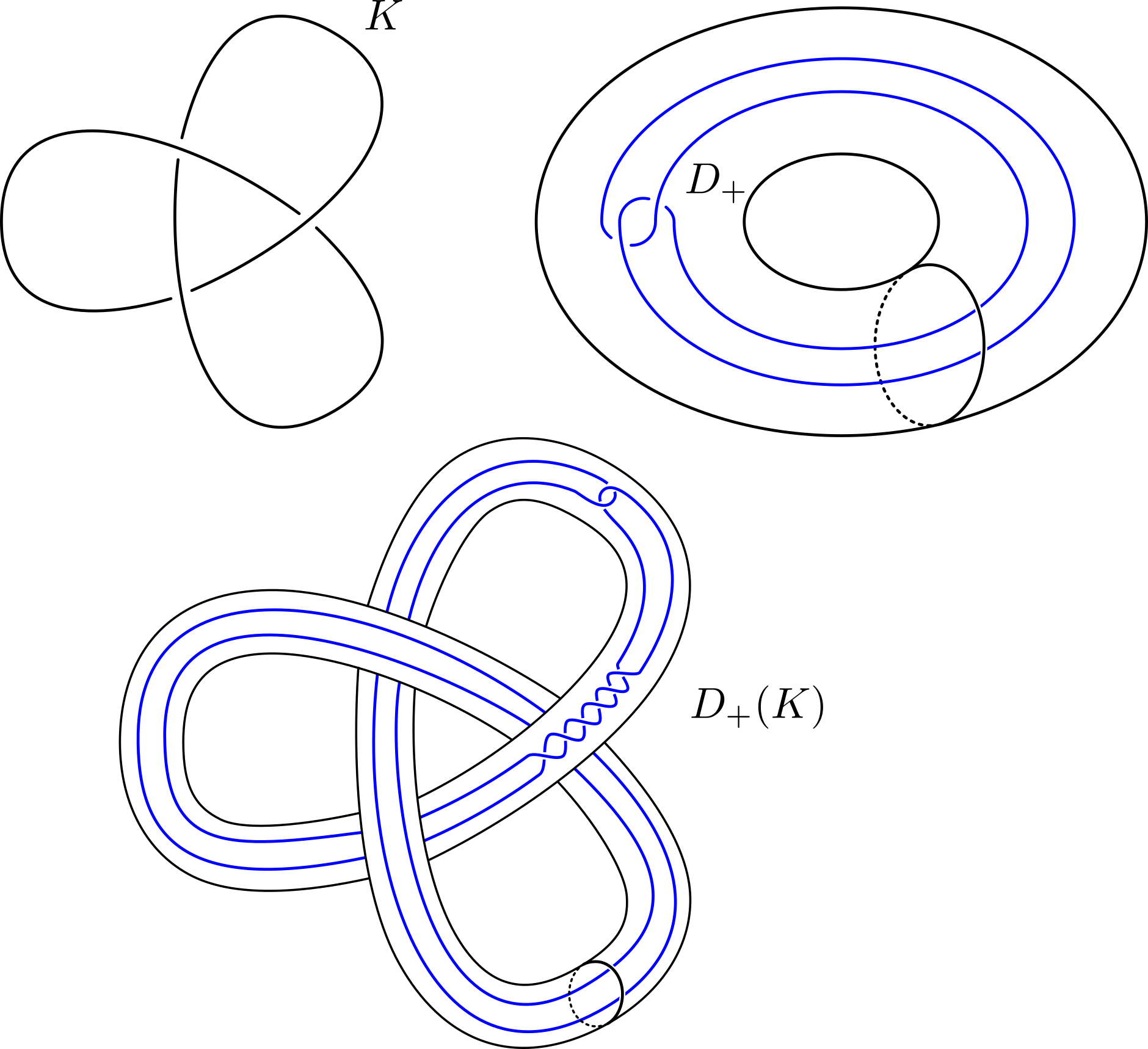}
    \caption{Left: A diagram for the right handed trefoil in $\sthree$. Right: The Whitehead pattern, $D_{+}$, in $S^{1}\times\D^{2}$. Bottom: A diagram for the knot $D_{+}(T_{2,3})$ in $\sthree$.}
    \label{fig:satelliteop}
\end{figure}

Recall that surgery along $K$ requires removing a solid torus in $\sthree$ and gluing it back in. The image of the core of the surgery solid torus under gluing is called the \textit{dual knot} to the surgery along $K$. In the case of $1/n$ surgery, we denote the dual knot as $\muk\subset\dW$. Much information is known about dual knots, as well as various flavors of their knot Floer homology, including surgery formulae in \cite{hedden2019dual}.

We highlight a feature of the dual knot of an integral surgery here. 
Given an integral framed link $(L,\Lambda)$ in $S^3$, one can construct a 4-manifold by attaching 2-handles along the framed link to $B^4$. The resulting 4-manifold has $\sthree_{\Lambda}(L)$ as the boundary. See Figure \ref{fig:handleattach} for an example, where the 2-handle is parametrized as $\D^{2}\times \D^{2}.$ Therefore, each core of the surgery tori is isotopic to the boundary of the cocore (the second $\D^{2}$ factor) of the attached 2-handle. Thus, the dual knot is slice in the 4-manifold, where the slice disk is the cocore of the 2-handle itself.  For a reference on handle decompositions of 4-manifolds and Kirby calculus, see \cite{gompf19994}. Moreover, 

\begin{theorem}[\cite{gordon1975knots}]\label{thm:contractible}
    If $K$ is slice, then there is a smooth, contractible 4-manifold $W$ with boundary $\dW$.
\end{theorem}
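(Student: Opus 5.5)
The plan is to build $W$ by hand from a slice disk and then check contractibility with the usual Kirby-calculus and homotopy bookkeeping. Let $D\subset B^4$ be a smooth slice disk for $K$, and let $X_D=B^4\setminus\nu(D)$ be its exterior. The boundary of $X_D$ is $S^3_0(K)$: we remove $\nu(K)$ from $S^3$ and glue in $D\times S^1$, where the meridian $\{pt\}\times S^1$ of $D$ is the meridian $\mu$ of $K$ and $\partial D\times\{pt\}$ is the $0$-framed longitude. Alexander duality gives $H_*(X_D)\cong H_*(S^1)$, with $H_1$ generated by $\mu$. Now attach a single $2$-handle to $X_D$ along a pushoff of $\mu$ in the boundary, with a framing to be chosen. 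The resulting manifold $W$ has $H_*(W)\cong H_*(pt)$, because the $2$-handle kills the generator $\mu$ of $H_1(X_D)\cong\mathbb{Z}$ and creates no new $H_2$.

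Next I identify $\partial W$. On the $3$-dimensional level, attaching the handle does surgery on $\mu\subset S^3_0(K)$. Equivalently, we have the two-component surgery diagram $K\cup\mu$ with framings $0$ and $m$. The slam-dunk replaces this by surgery on $K$ alone with coefficient $0-1/m=-1/m$. Choosing $m=-n$ gives $\partial W=S^3_{1/n}(K)$, as required. For any $n\neq0$ this is an integral homology sphere, which is consistent with $W$ being acyclic.

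It remains to get contractibility and not just acyclicity. The group $\pi_1(X_D)$ is normally generated by the meridian $\mu$. This holds because $X_D$ is built from $\partial$-collar pieces in such a way that the disk complement's fundamental group is normally generated by a meridian: removing $D$ from $B^4$, the van Kampen theorem with $B^4=X_D\cup\nu(D)$ gives $1=\pi_1(B^4)=\pi_1(X_D)/\langle\langle\mu\rangle\rangle$. So attaching the $2$-handle along $\mu$ kills $\pi_1$, and $\pi_1(W)=1$. A simply connected compact $4$-manifold with the homology of a point is contractible, by Hurewicz and Whitehead since $W$ has the homotopy type of a CW complex. Smoothness is automatic, since $D$ is smooth and handle attachment is smooth.

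The main point to get right is the framing and sign bookkeeping in the slam-dunk, that is, matching $m$ to obtain exactly $+1/n$ rather than $-1/n$. If the orientation conventions come out the other way, one can instead use the mirror, noting $-S^3_{1/n}(K)=S^3_{-1/n}(\overline{K})$ with $\overline K$ also slice, or reverse the orientation of $W$. The normal generation of $\pi_1(X_D)$ by the meridian is the only genuinely topological input, and the van Kampen argument above handles it.
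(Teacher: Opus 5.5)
Your proposal is correct and follows essentially the same route as the paper's proof of Proposition~\ref{prop:W}. You remove a neighborhood of a slice disk from $B^4$, attach a $-n$-framed $2$-handle along a meridian of $K$, identify the boundary via the slam dunk, and kill $\pi_1$ using that the disk exterior's fundamental group is carried by the meridian. Your version is in fact more careful than the paper's: you correctly say \emph{normally} generated, and you supply the acyclicity computation and the Hurewicz--Whitehead step that the paper leaves implicit.
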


\begin{figure}
    \centering
    \includegraphics{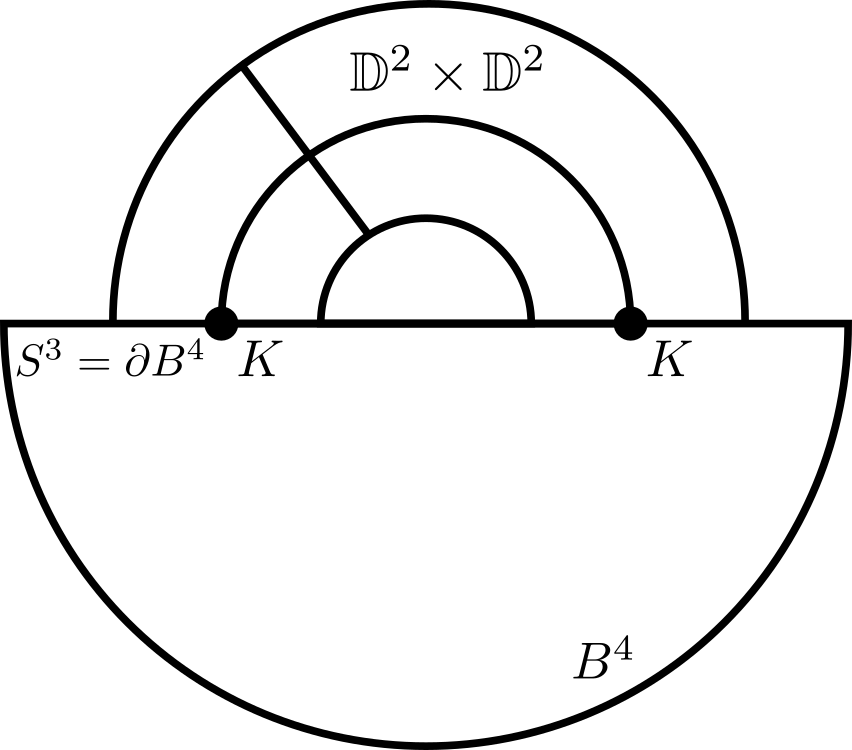}
    \caption{A half-dimensional schematic if a 2-handle attachment.}
    \label{fig:handleattach}
\end{figure}

There is a nice Kirby diagram for a 4-manifold $W$ as in Theorem~\ref{thm:contractible} obtained from the surgery diagram for $\dW$, shown in Figure \ref{fig:slam dunk}. We first note that the dual knot to $1/n$ surgery along $K$ looks like a pushoff of $K$ in the surgery diagram in Figure \ref{fig:1/n surgery}. We then perform a Kirby move known as the \textit{slam dunk} to obtain a surgery diagram with 0-framed surgery on $K$ and $-n$-framed surgery on a new, unknotted component, denoted $L$. Finally, we slide the dual knot representative over $K$ itself to obtain the desired surgery diagram for $\dW$. Now we show that all the Kirby diagrams in Figure \ref{fig:handleslide} are Kirby diagrams for $\dW$.

\begin{figure}
    \centering
    \includegraphics[scale=.7]{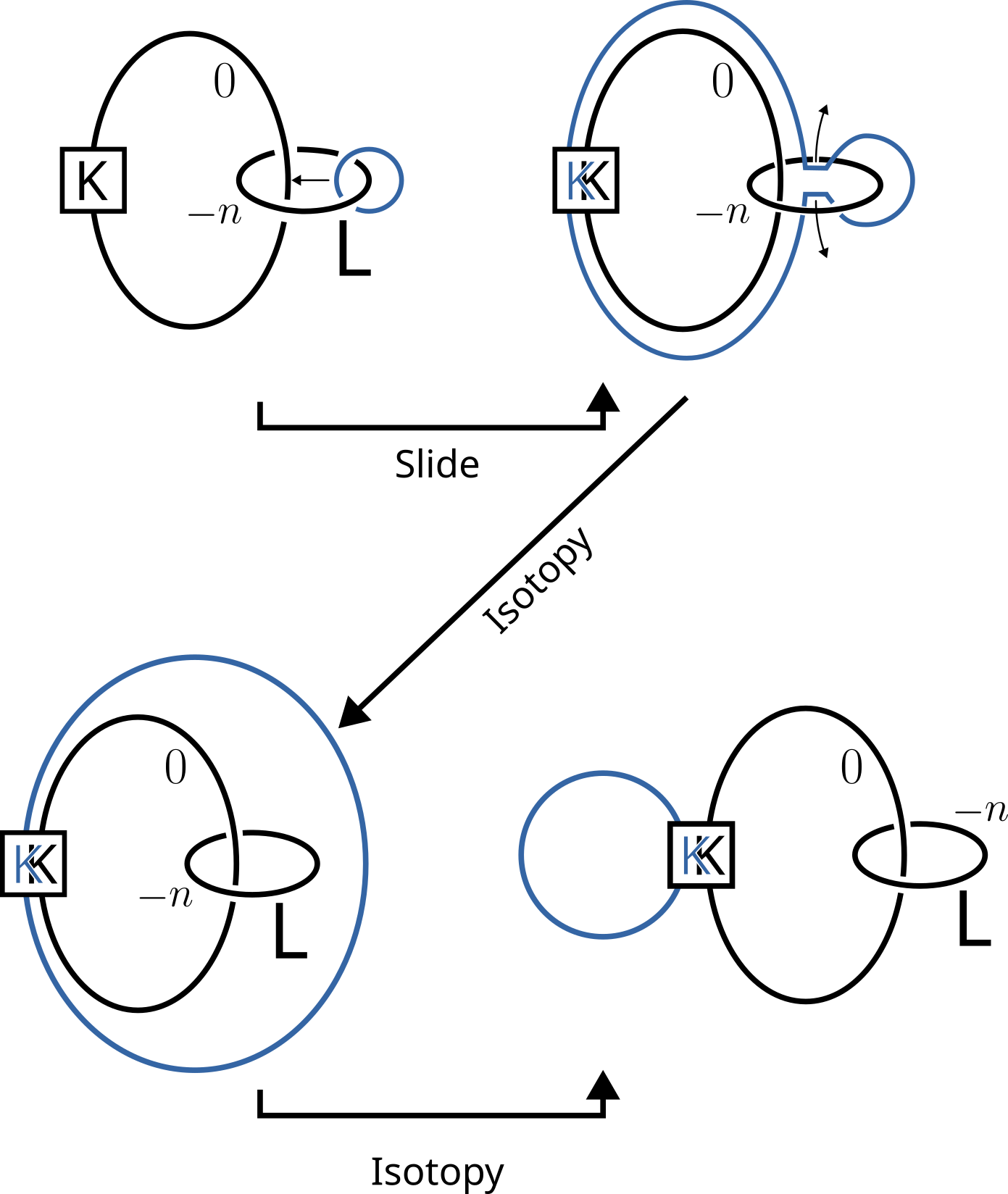}
    \caption{Using isotopies and handle slides to see the isotopy class of the dual knot to $-n$ surgery on $L$.}
    \label{fig:handleslide}
\end{figure}

\begin{figure}
    \centering
    \includegraphics{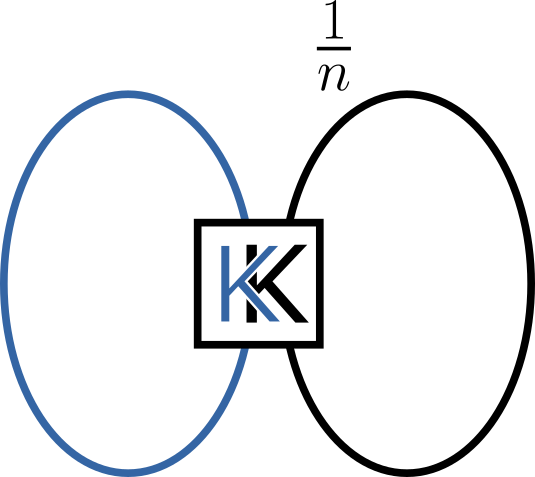}
    \caption{A surgery diagram for $1/n$ surgery along $K$, keeping track of the dual knot to the surgery, $\muk$, in blue.}
    \label{fig:1/n surgery}
\end{figure}

\begin{figure}
    \centering
    \includegraphics[width=.95\linewidth]{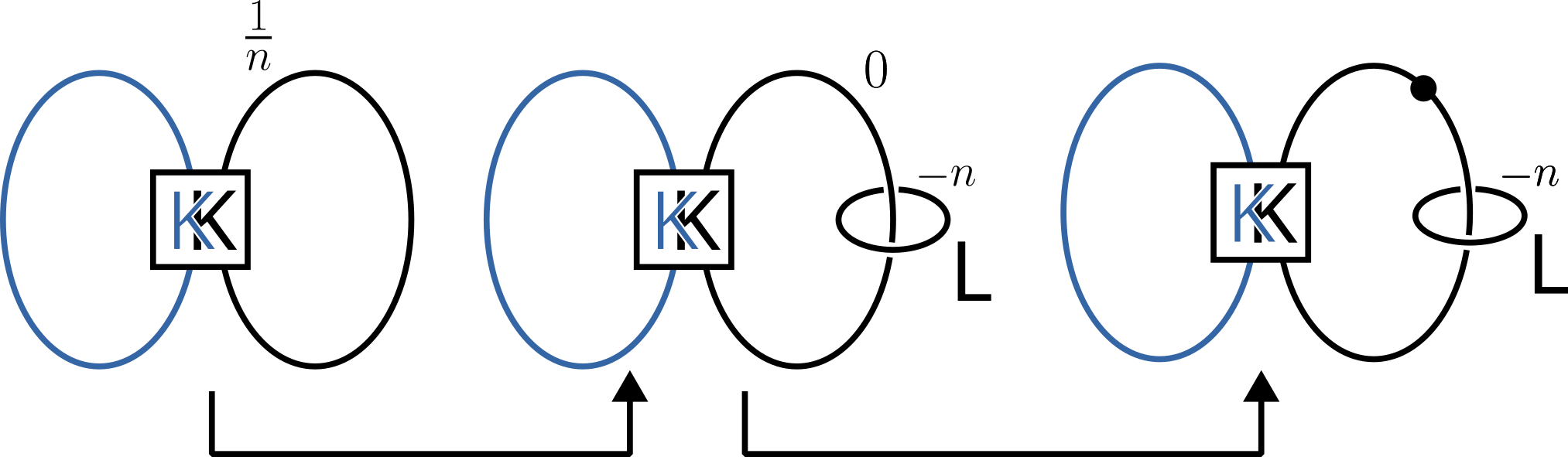}
    \caption{A \textit{slam dunk} Kirby move applied to obtain an integral surgery diagram for $\dW$. On the right is a Kirby diagram for a new 4-manifold, $W$, whose boundary is homeomorphic to $\dW$.}
    \label{fig:slam dunk}
\end{figure}

\begin{proposition}\label{prop:W}
    Suppose $K$ is slice. Then the Kirby diagram on the right hand side of Figure \ref{fig:slam dunk} describes a smooth, contractible 4-manifold $W$ whose boundary is $\dW$.
\end{proposition}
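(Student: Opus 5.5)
The plan is to read the right-hand diagram of Figure \ref{fig:slam dunk} as a relative handle diagram. The component $K$ stands for the complement of a slice disk in $B^{4}$ (the generalization of a dotted circle). The unknotted component $L$ is a meridian of $K$ and carries a $-n$-framed $2$-handle. Concretely, fix a smooth slice disk $D\subset B^{4}$ for $K$ and set $X_{D}=B^{4}\setminus\nu(D)$. Define $W = X_{D}\cup_{L} h^{2}$, where $h^{2}$ is a $2$-handle attached along a meridian $L$ of $K$ in $\partial X_{D}$ with framing $-n$. If $K$ is the unknot, $D$ is the standard disk, $X_{D}\cong S^{1}\times D^{3}$, and $K$ is literally a dotted circle, so this is a genuine Kirby diagram. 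For general slice $K$ this relative-diagram reading is forced: if both components of Figure \ref{fig:slam dunk} were honest $2$-handles attached to $B^{4}$, the intersection form would have matrix $\begin{pmatrix}0&1\\1&-n\end{pmatrix}$ and the manifold would have $H_{2}\cong\Z^{2}$, so it would not be contractible.

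The proof then has three steps. First, identify the boundary. We have $\partial X_{D}=S^{3}_{0}(K)$, since the boundary of a slice disk complement is $0$-surgery on $K$. Attaching $h^{2}$ along $L$ with framing $-n$ does $-n$-surgery on the meridian $L$. The slam dunk of Figure \ref{fig:slam dunk}, read from right to left, converts this framed pair into the single coefficient $0-1/(-n)=1/n$ on $K$. Hence $\partial W\cong\dW$.

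Second, compute the algebraic topology. By Alexander duality, $H_{*}(X_{D})\cong H_{*}(S^{1})$, with $H_{1}$ generated by the meridian of $D$, which is isotopic to $L$ in $\partial X_{D}$. Moreover, $\pi_{1}(X_{D})$ is normally generated by that meridian: a Wirtinger-type argument from a handle decomposition of $D$ relative to the boundary shows the complement's group is generated by conjugates of meridians. Attaching $h^{2}$ along $L$ kills this normal generator, so $\pi_{1}(W)=1$. The Mayer--Vietoris or cellular chain computation gives $H_{1}(W)=0$, and $H_{2}(W)=0$ because the new $2$-cell maps isomorphically onto $H_{1}(X_{D})\cong\Z$. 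All higher homology vanishes, so $W$ is acyclic.

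Third, conclude contractibility. $W$ is simply connected and has trivial reduced homology, and it has the homotopy type of a CW complex. By Hurewicz all homotopy groups vanish, and by Whitehead's theorem $W$ is contractible. This recovers Theorem~\ref{thm:contractible} with an explicit picture.

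The main obstacle is the first point of the second step: justifying that $\pi_{1}(B^{4}\setminus\nu(D))$ is normally generated by a meridian for an arbitrary, not necessarily ribbon, slice disk. I would argue this by putting $D$ in Morse position with respect to the radial function. The complement is then built from $S^{3}\setminus\nu(K)$, whose group is normally generated by meridians, by adding handles. Each index-$0$ or index-$1$ critical point of $D$ contributes a generator that is a meridian or a relation, and each index-$2$ critical point contributes no new generator. Hence every generator is a conjugate of a meridian, and all meridians are conjugate since $D$ is connected. A secondary point is making rigorous that the figure, drawn with $K$ as an ordinary component, is meant as this relative diagram; I would state this convention explicitly.
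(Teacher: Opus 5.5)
Your proposal is correct and follows the paper's own argument: read $K$ as a carved-out slice disk ("digging a ditch"), let the $-n$-framed $2$-handle along the meridian $L$ kill $\pi_1(B^4\setminus\nu(D))$, and conclude that $W$ is contractible. Your version is more careful than the paper's in two places. The paper says this group is \emph{generated} by a meridian, which is false in general; only normal generation holds, and that is what you use. The paper also omits the homology computation and the Hurewicz--Whitehead step that you supply. The normal-generation point you flag as the main obstacle can be settled faster by general position than by Morse theory: a null-homotopy in $B^4$ of a loop in the complement meets $D$ in finitely many transverse points, so the loop is a product of conjugates of meridians.
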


\begin{proof}
    First, we note that the sequence of Kirby diagrams in Figure \ref{fig:slam dunk} are surgery diagrams for $\dW$, by Kirby's Theorem \cite{kirby1978calculus}. This is because we used only Kirby moves to manipulate the diagram, keeping track of the isotopy class of the dual knot, in blue. Since $K$ is slice in $\sthree=\del B^{4}$, we begin by removing a neighborhood of a slice disk for $K$. This neighborhood is diffeomorphic to $\D^{2}\times \D^{2}$, with the first factor thought of as the slice disk, and the second as the thickening to a neighborhood. Hence, we are removing a 2-handle from $B^{4}$ by instead introducing its canceling 1-handle, as in the ``digging a ditch'' analogy in \cite{gompf19994}. With this perspective, we can see that the fundamental group of $B^{4}\setminus (\D^{2}\times\D^{2})$ is generated by a meridian of the boundary of the removed disk; that is, a meridian of $K$. Now, we attach the other 2-handle with framing $-n$ along $L$, which is also a meridian of $K$, killing that generator in $\pi_{1}(B^{4}\setminus(\D^{2}\times\D^{2}))$, so $W=(B^{4}\setminus (\D^{2}\times\D^{2}))\cup (\mathrm{2-handle})$ is contractible.
\end{proof}

The particular feature of the dual knot to $1/n$ surgery which is important to us is the following.

\begin{lemma}\label{lem:dualslice}
    The dual knot to $1/n$ surgery along a slice knot $K\subset\sthree$, $\muk\subset\dW$, is slice in $W$ as in Proposition \ref{prop:W}.
\end{lemma}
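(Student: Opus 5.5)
The plan is to use the observation made just before Theorem~\ref{thm:contractible}: the dual knot to an integral surgery is isotopic to the boundary of the cocore of the corresponding 2-handle, so it bounds a smooth disk in the 4-manifold. I will apply this to the $-n$-framed 2-handle attached along $L$ in the construction of $W$ from Proposition~\ref{prop:W}.

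\textbf{Step 1: identify $\muk$ in the final diagram.} Follow the blue curve through the Kirby moves of Figures~\ref{fig:1/n surgery}, \ref{fig:slam dunk} and \ref{fig:handleslide}. Start from the pushoff of $K$ representing $\muk$ in the $1/n$-surgery diagram. Perform the slam dunk, which trades $1/n$-surgery on $K$ for $0$-surgery on $K$ together with $-n$-surgery on a meridian $L$. Then slide the blue curve over the $0$-framed component $K$. The blue curve should become isotopic, in $\dW$, to a meridian of $L$.

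The key point is that a meridian of a surgery component is isotopic in the surgered manifold to the core of the glued-in solid torus, that is, to the dual knot of that surgery. Hence, once the blue curve is a meridian of $L$, $\muk$ is the dual knot to the $-n$-surgery on $L$.

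\textbf{Step 2: interpret this in $W$.} In Proposition~\ref{prop:W}, $W$ is $B^4$ minus a neighborhood of a slice disk for $K$ (the dotted/0-framed component), with a 2-handle $\D^2\times\D^2$ attached along $L$ with framing $-n$. The boundary of the cocore $\{0\}\times\D^2$ of this 2-handle lies in $\partial W=\dW$. It is exactly the core of the surgery solid torus for $L$, so by Step 1 it is isotopic to $\muk$.

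\textbf{Step 3: produce the disk.} Take the cocore as a smooth disk. Extend it through a collar $\dW\times I$ by the trace of the isotopy from Step 1. The result is a smoothly embedded disk in $W$ with boundary $\muk$, so $\muk$ is slice in $W$.

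The main obstacle is Step 1, namely verifying carefully that the handle slide over $K$ carries the pushoff to a curve isotopic to a meridian of $L$. Two things have to be checked. First, the slam dunk correctly identifies the $1/n$-surgery dual with a meridian of the new unknotted component $L$. Second, the slide over the $0$-framed $K$ removes the linking with $K$ without introducing other knotting.

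To handle this, I would first argue abstractly. In the slam dunk, $L$ is a meridian of $K$, and the solid torus obtained from $-n$-surgery on $L$ glued to $0$-surgery on $K$ is exactly the $1/n$-surgery solid torus on $K$. So its core, which is the dual of $L$, coincides with the $1/n$-dual core. This is the standard proof of the slam dunk, and it makes the identification essentially tautological; the pictured handle slide then only serves as confirmation.
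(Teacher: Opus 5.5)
Your proposal is correct and follows the same route as the paper: identify $\muk$ with the dual knot of the $-n$-framed surgery on $L$, which is the belt circle of that 2-handle and so bounds its cocore in $W$. Your abstract slam-dunk argument (that the core of the $L$-surgery solid torus is the core of the $1/n$-surgery solid torus) supplies the justification the paper leaves to the figures. You also correctly say ``cocore'' where the paper's proof text says ``core.''
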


\begin{proof}
    This can readily be seen using the argument above, since the blue curve is isotopic to the dual knot to $-n$ surgery on $L$ in Figure \ref{fig:slam dunk}. Thus, it is the boundary of the core of the 2-handle attached along $L$, which is a smooth disk in $W$.
\end{proof}

A well known fact is that the Whitehead double of a smoothly slice knot in $\sthree$ is again smoothly slice. Analogously, we achieve the following corollary:

\begin{corollary}\label{cor:dualslice}
    The Whitehead double of the dual knot to $1/n$ surgery along a slice knot $K\subset\sthree$, $\Dmu$, is slice in $W$ as in Proposition \ref{prop:W}.
\end{corollary}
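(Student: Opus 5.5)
We build a slice disk for $\Dmu$ in $W$ directly from the slice disk for $\muk$ supplied by Lemma \ref{lem:dualslice}, copying the classical argument that Whitehead doubles of slice knots are slice.

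\textbf{Step 1: the band move.} The Whitehead double of any knot $J$ has a diagram with a single clasp. Changing that clasp, or equivalently doing one band move across it, turns the pattern $D_{+}$ inside the solid torus into two parallel copies of the core with opposite orientations. Their framing relative to each other is the framing used to form the satellite. Apply this inside a tubular neighborhood $\nu(\muk)\subset\dW$, using the framing that defines $\Dmu$. The band move gives a saddle cobordism in a collar $\dW\times[0,\epsilon]$ from $\Dmu$ to a two-component link $\muk\cup -\muk'$. Here $\muk'$ is the pushoff of $\muk$ along that framing.

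\textbf{Step 2: the annulus.} Let $D\subset W$ be the cocore disk from Lemma \ref{lem:dualslice}; it is the core of the 2-handle attached along $L$, with $\partial D=\muk$. The framing used for the Whitehead double should be the one induced by a normal vector field along $D$. Taking a nonvanishing normal section of $D$ and pushing off gives a disjoint parallel copy $D'$ with $\partial D'=\muk'$. The 2-handle structure makes this explicit: $D'=\{0\}\times\D^2$ pushed to $\{p\}\times\D^2$.

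\textbf{Step 3: capping.} Form $D\cup D'$ and attach the saddle from Step 1 along it in the collar. The result is a disk with one band attached joining two disks, which is again a disk. It is smoothly embedded in $W$ with boundary $\Dmu$.

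\textbf{Main obstacle: matching framings.} The expected difficulty is making sure the framing of $\muk$ used in the satellite construction agrees with the framing induced by $D$. Since $\dW$ is an integral homology sphere, $\muk$ has a canonical Seifert framing, and $D_{+}$ is taken with respect to it. I would argue that any surface in the contractible $W$ bounded by $\muk$ induces the Seifert framing on it. The relative self-intersection of $D$ equals the linking of $\muk$ with its pushoff, and $H_2(W)=0$ forces this to vanish.

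If this argument is shaky, there is a fallback: a pattern with a different twisting is still a twisted Whitehead double, which also becomes two parallel cores after the clasp band move. Those cores can then be capped by parallel disks because the twisting is absorbed into the choice of pushoff of $D$. So sliceness holds for every twist parameter, and the framing issue is harmless either way.
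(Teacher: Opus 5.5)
Your main argument is correct, but it takes a different route from the paper's. The paper punctures the slice disk of $\muk$ to get an annulus (a concordance) to the unknot, takes the Whitehead double of that annulus, and uses $D_{+}(U)=U$. That route works verbatim for any pattern $P$ with $P(U)$ slice. It does, however, quietly need the normal framing of the annulus to restrict to the Seifert framing at both ends; otherwise the far end is a twisted double of the unknot, i.e.\ a twist knot rather than $U$.

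You instead resolve the clasp with a single saddle in a collar, turning $\Dmu$ into $J\cup -J'$, where $J=\muk$ and $J'$ is its framing pushoff. You then cap with the two parallel cocore disks $D=\{0\}\times\D^{2}$ and $D'=\{p\}\times\D^{2}$. This builds the disk explicitly and brings the framing question into the open. Your resolution of it is the right one: since $D\cap D'=\emptyset$ and $H_{2}(W;\mathbb{Q})=0$, one gets $\mathrm{lk}(J,J')=D\cdot D'=0$. So the disk framing is the Seifert framing of $J$ in the homology sphere $\dW$, which is the framing used to define $\Dmu$.

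You should delete the fallback paragraph, because it is false. The normal bundle of a disk has a unique trivialization up to homotopy, so every pushoff of $D$ along a nonvanishing normal section induces the same framing on $J$. There is no freedom in which to ``absorb'' a twist. In fact, by the same linking computation, if $\mathrm{lk}(J,J')=t\neq 0$ then $J\cup -J'$ bounds no pair of disjoint disks in $W$ at all. Twisted doubles of slice knots are in general not slice: the $\pm1$-twisted Whitehead doubles of the unknot are the trefoil and the figure-eight knot. So the framing issue is not harmless; it is exactly $H_{2}(W)=0$ that makes the untwisted case work.

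There are also two minor slips. First, a crossing change at the clasp unknots the pattern rather than producing two parallel cores; only the band move (the oriented smoothing of a clasp crossing) does that. Second, the disk is the cocore, not the core, of the 2-handle attached along $L$. Your formula $\{p\}\times\D^{2}$ is the correct cocore description.
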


\begin{proof}
    By Lemma \ref{lem:dualslice}, the dual knot is itself slice. Given the dual knot is slice we can find an annulus with one boundary component the dual knot and one boundary the unknot, then we can ``Whitehead double'' the annulus in the same fashion as the dual knot itself. Since the Whitehead double of the unknot is again the unknot, we have presented an annulus between the Whitehead double of the dual knot and the unknot.
\end{proof}

\section{Invariants}\label{chap:HF}

The invariants that we use to obstruct shallow sliceness are from Heegaard Floer homology. Since the introduction of Heegaard Floer homology by Ozsv\'{a}th and Szab\'{o} in 2001 \cite{ozsvath2004heegaard}, many versions and refinements have arisen to accommodate various settings in low-dimensional topology and knot theory. While the original definition assigns a graded vector space to a closed, oriented, smooth 3-manifold equipped with a $\spinc$ structure, Lipschitz, Ozsv\'{a}th, and Thurston have since adapted the tool to bordered 3-manifolds in \cite{lipshitz2018bordered}, calling it Bordered Heegaard Floer homology. The power of bordered Heegaard Floer homology is the so-called pairing theorems, which give a way to compute the Heegaard Floer homology groups of closed 3-manifolds obtained by gluing two bordered 3-manifolds along their common boundary in a specified way. Dehn surgery and satellites are two prominent examples of such gluings that arise naturally in the context of a knot in a closed 3-manifold $Y$.

Here, we briefly recall the definition of the hat version of Heegaard Floer homology for both 3-manifolds and knots. For further details, see \cite{ozsvath2004heegaard,ozsvath2004knots}. Let $\F=\Z/2\Z$.
\begin{definition}\label{def:CFinf}
    Given a closed, smooth 3-manifold $Y$ together with a $\spinc$ structure $\s$ and a Heegaard diagram $(\Sg,\alphas,\betas,w)$ for $Y$, the \textit{Heegaard Floer chain complex} of $(Y,\s)$ is freely generated over $\F$ by all intersection points $\x\in\Ta\cap\Tb$ such that $\s_{w}(\x)=\s$, denoted $$\CFhat(Y,\s):=\bigoplus_{\substack{\x\in\Ta\cap\Tb\\ \s_{w}(\x)=\s}}\F,$$ with boundary map defined on generators by $$\partial(\x):=\sum_{y\in \mathbb{T}_{\alpha}\bigcap\mathbb{T}_{\beta}}\sum_{\substack{ {\substack{\phi\in\pi_{2}(x,y)\\ \mu(\phi)=1}} \\ n_{w}(\phi)=0 }}\#\widehat{\mathcal{M}}(\phi)\y.$$
\end{definition}

We denote the homology of the Heegaard Floer chain complex by $\HFhat(Y,\s)$. Note that, when the manifold is $S^{3}$, we have $\HFhat(S^{3})\cong \F$.

\begin{definition}\label{def:CFKUV}
    Given a knot $K$ in $Y$ and a doubly-pointed Heegaard diagram $\cH=(\Sg,\alphas,\betas,w, z)$ compatible with $K,$ the \textit{hat knot Floer complex} of the pair $(Y, K)$ is freely generated over $\F$ by all intersection points $\x\in\Ta\cap\Tb$, denoted $$\CFKhat(Y, K):=\bigoplus_{\x\in\Ta\cap\Tb}\F,$$ with boundary map defined on generators by $$\partial(\x):=\sum_{y\in \mathbb{T}_{\alpha}\bigcap\mathbb{T}_{\beta}}\sum_{\substack{{\substack{\phi\in\pi_{2}(x,y)\\ \mu(\phi)=1}}\\n_{w}(\phi)=0, n_{z}(\phi)=0} }\#\widehat{\mathcal{M}}(\phi)\y.$$
\end{definition}

In addition to the hat flavor of the knot Floer complex, we employ the \textit{minus} flavor in order to gain extra structure, namely a module structure coming from an action by a formal variable $U.$

\begin{definition}\label{def:CFKm}
    Given a knot $K$ in $Y$ and a doubly-pointed Heegaard diagram $\cH=(\Sg,\alphas,\betas,w, z)$ compatible with $K,$ the \textit{minus knot Floer complex} of the pair $(Y, K)$ is freely generated over $\F[U]$ by all intersection points $\x\in\Ta\cap\Tb$, denoted $$\CFKm(Y, K):=\bigoplus_{\x\in\Ta\cap\Tb}\F[U],$$ with boundary map defined on generators by $$\partial^{-}(\x):=\sum_{y\in \mathbb{T}_{\alpha}\bigcap\mathbb{T}_{\beta}}\sum_{\substack{{\substack{\phi\in\pi_{2}(x,y)\\ \mu(\phi)=1}}}}\#\widehat{\mathcal{M}}(\phi)U^{n_{w}(\phi)}\y.$$
\end{definition}

When the manifold in question is $\sthree$, we simply write $\CFKm(K)$ and omit reference to the manifold. For distinction between the complexes $\CFKhat(T_{2,3})$ and $\CFKm(T_{2,3})$, compare Figures \ref{algebraictau} and \ref{fig:immersedexample}.

The relative Alexander grading, induced by a knot $K$, is calculated by taking a holomorphic disk connecting a pair of generators $(\x,\y)$, i.e. $\phi\in\pi_{2}(\x,\y)$, using the relation
\begin{center}
    $A(\x)-A(\y)=n_{z}(\phi)-n_{w}(\phi)$.
\end{center}
One can lift it to an absolute grading using the symmetrized Alexander polynomial. Using the filtration induced by the Alexander grading, Ozsv\'{a}th and Szab\'{o} defined the $\tau$-invariant for a knot $K$ in $S^{3}$ as follows. Let $\cF(K,m)\subset\CFhat(S^{3})$ be the subcomplex generated by intersection points whose filtration level is less than or equal to $m$. We obtain an induced sequence of maps

\begin{center}
    $\iota_{K}^{m}: H_{\ast}(\cF(K,m))\rightarrow H_{\ast}(\CFhat(S^{3}))=\HFhat(S^{3})\cong \F$,
\end{center}
which for sufficiently large $m$ are all isomorphisms. Then
\begin{center}
    $\tau(K)=\mathrm{min}\{m\in\Z|\iota_{K}^{m} \text{ is non-trivial}\}$.
\end{center}
Equivalently, one can use the spectral sequence induced by the Alexander filtration. The first page corresponds to the knot Floer chain complex of the knot, and the infinity page computes the Heegaard Floer homology of the ambient manifold $S^3$, whose homology is rank 1. The $\tau$-invariant can be defined as the Alexander grading of the unique cycle on the infinity page.

\begin{example}
\begin{figure}
    \centering
   \[\begin{tikzcd}
	1 & \bullet &&& 1 & \textcolor{rgb,255:red,214;green,92;blue,92}{\bullet} \\
	\\
	0 & \bullet &&& 0 & \bullet \\
	\\
	{-1} & \bullet &&& {-1} & \bullet \\
	& {\CFKhat(T_{2,3})} &&&& {\CFhat(S^{3})}
	\arrow[from=3-6, to=5-6]
\end{tikzcd}\]
    \caption{The vertical level shows the Alexander grading of the corresponding generator, so $\tau(T_{2,3})=1.$}
    \label{algebraictau}
\end{figure}
 
In Figure \ref{algebraictau}, we have the complex for $T_{2,3}$ and $S^{3}$. The red dot is the one which remains nontrivial on the infinity page. Hence, we have $\tau(T_{2,3})=1$.

\end{example}

The $\tau$-invariant is generalized by Hedden and Raoux for a null-homologus knot $K$ in any closed, oriented three manifold $Y$. For simplicity, we will assume $Y$ to be an integer homology sphere.

\begin{definition}\cite{heddenraoux}
    For a nontrivial class $\alpha$ in $\HFhat(Y)$,
    \begin{center}
        $\tau_{\alpha}(Y,K)=\mathrm{min}\{m\in\Z|\alpha\in\mathrm{Im}(\iota_{K}^{m})\}$.
    \end{center}
\end{definition}

Geometrically, $\tau_{\alpha}(Y,K)$ gives a lower bound for the genus of surfaces with boundary $K$ in a 4-manifold with boundary.

\begin{proposition}[\cite{heddenraoux}]\label{prop:talpha}
    Let $K$ be a null-homologous knot in $Y$. If $\Sigma\subset Y\times I$ is a smoothly embedded oriented surface with boundary $K\subset Y\times \{1\}$, then $$|\tau_{\alpha}(Y,K)|\leq g(\Sigma).$$
\end{proposition}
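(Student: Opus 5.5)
The plan is to follow the argument Ozsváth and Szabó use for $|\tau(K)|\le g_4(K)$ in $S^3$, with $B^4$ replaced by the cobordism $Y\times I$ and the class $\alpha$ transported along it. The first step is to turn the surface $\Sigma$ into a 2-handle cobordism. Take the trace $X_n(K)$ of $n$-surgery on $K\subset Y\times\{1\}$, attached to $Y\times I$. Capping $\Sigma$ with the core of the 2-handle gives a closed surface $\widehat\Sigma$ of genus $g=g(\Sigma)$ and self-intersection $n$. This surface sits in the cobordism $W_n$ from $Y$ to $Y_n(K)$ obtained by attaching that handle to $Y\times I$. Since $Y$ is an integer homology sphere, $H_2(W_n)\cong\Z$ is generated by $[\widehat\Sigma]$, and $\spinc$ structures on $W_n$ are indexed by $\langle c_1(\s),[\widehat\Sigma]\rangle$.

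The second step uses the large surgery formula of Ozsváth–Szabó, which holds verbatim for null-homologous knots in $Y$. For $n\gg0$ and $|m|\le n/2$, it identifies $\CFhat(Y_n(K),\s_m)$ with the quotient/subcomplex $\cF(K,m)$. Under this identification, the map induced by $W_n$ in $\spinc$ structure $\s_m$ becomes, up to the appropriate dual/reverse orientation, the inclusion map $\iota_K^m$. Concretely, I would work with the reversed cobordism $\overline{W}_n$ from $Y_n(K)$ to $Y$. Its component in $\s_m$ is $F_{m}$, and the key identity is
$$F_{m}=\iota^m_K: H_\ast(\cF(K,m))\to\HFhat(Y).$$
In particular, $\alpha$ lies in the image of $F_m$ exactly when $m\ge\tau_\alpha(Y,K)$.

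The third step is the adjunction inequality for cobordism maps. Ozsváth–Szabó show that if $\widehat\Sigma$ has $g\ge1$ and positive square and $|\langle c_1(\s),[\widehat\Sigma]\rangle|+[\widehat\Sigma]^2>2g-2$, then $F_{W,\s}=0$. A more convenient alternative is to blow up to reduce to square $0$ and then apply the vanishing theorem. Converting Chern class evaluations to $m$ via $\langle c_1(\s_m),[\widehat\Sigma]\rangle=2m-n$ shows that $F_m$, hence $\iota^m_K$, vanishes in the range $m<-g$. This gives $\tau_\alpha(Y,K)\ge -g$.

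The fourth step handles the other inequality. Apply the same argument to the mirror $(-Y,\overline K)$ with the dual class $\alpha^\ast\in\HFhat(-Y)$, using the duality $\tau_{\alpha^\ast}(-Y,\overline K)=-\tau_\alpha(Y,K)$. The surface $\Sigma$ reflects to a surface of the same genus in $(-Y)\times I$, so this yields $\tau_\alpha\le g$.

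I expect the main obstacle to be the bookkeeping in the second and third steps. The $\spinc$ labels, orientation reversals and the sign conventions relating $m$ to $c_1$ must all match, so that the vanishing range lands exactly on $m<-g$ rather than being off by a shift. Verifying the duality statement for $\tau_\alpha$ under mirroring also requires care with the pairing between $\HFhat(Y)$ and $\HFhat(-Y)$. The case $g=0$ is handled by stabilizing $\Sigma$ once and noting that the resulting bound must be an integer-valued inequality that holds for all genera, or directly via the square-zero vanishing theorem for spheres.
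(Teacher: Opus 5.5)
The paper gives no proof of this proposition; it is quoted from Hedden--Raoux. Your outline therefore has to stand on its own, and it has a genuine gap at Steps 2--3: the sign of $[\widehat\Sigma]^2$.

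\textbf{The gap.} As a cobordism from $Y_n(K)$ to $Y$, your $\overline W_n$ is $W_n$ with its orientation reversed. So it is negative definite, and there $[\widehat\Sigma]^2=-n$, not $+n$. The Ozsv\'ath--Szab\'o vanishing theorem you invoke needs $[\widehat\Sigma]^2\ge 0$, so it says nothing about $\overline W_n$.

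This is not a bookkeeping issue. In the orientation where $[\widehat\Sigma]^2=+n$ (that is, $W_n$ itself, read either as $Y\to Y_n(K)$ or dually), the inequality gives $|2m-n|+n\ge n>2g-2$ for \emph{every} $m$ once $n>2g-2$. So every $\spinc$ component of that map vanishes. Taken at face value, your Step 3 would force $\iota^m_K=0$ for all $|m|\le n/2$, which is absurd because $\iota^m_K$ is an isomorphism for large $m$. The range $m<-g$ you quote does not come out of the inequality you state.

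Blowing up cannot rescue a negative-square surface. $\overline{\mathbb{CP}^2}$ summands make the square more negative, and $\mathbb{CP}^2$ summands kill every map. The naive inequality is genuinely false for negative squares: for an exceptional sphere $E$ one has $|\langle c_1,E\rangle|+E^2=0>-2$, yet by the blow-up formula those components are nonzero. So the genus of $\Sigma$ never validly enters your argument.

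\textbf{What is needed instead.} You need a different ingredient at this point. One option is to prove a vanishing statement for negative-square surfaces in the hat theory. Another is to bypass closed surfaces, for example with knot cobordism maps (Zemke):
\begin{itemize}
\item Delete a small disk from $\Sigma$ to get a genus-$g$ cobordism in $Y\times I$ from a local unknot $U$ to $K$.
\item Decorate it so that all the genus lies in the $z$-subsurface. Its map then shifts Alexander grading by $g$ and induces the identity on $\HFhat(Y)$ once the filtration is forgotten.
\item This gives $\tau_\alpha(Y,K)\le\tau_\alpha(Y,U)+g=g$.
\item The same surface turned upside down in $Y\times I$ gives $\tau_\alpha(Y,K)\ge -g$. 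No mirroring or passage to $-Y$ is needed.
\end{itemize}

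\textbf{Smaller problems.} Three issues would remain even with a correct vanishing input.
\begin{enumerate}
\item Large surgery identifies $\CFhat(Y_n(K),\s_m)$ with the hook complex $C\{\max(i,j-m)=0\}$, not with $\cF(K,m)$. The cobordism map only factors through $\iota^m_K$. You would still have to prove the nontrivial direction $\alpha\in\mathrm{Im}\,\iota^m_K\Rightarrow\alpha\in\mathrm{Im}\,F_m$ (or at least $F_m\neq0$), and that is exactly the direction your argument uses.
\item For $g=0$, stabilizing $\Sigma$ only yields $|\tau_\alpha|\le 1$.
\item When $\mathrm{rk}\,\HFhat(Y)>1$ there is no canonical dual class $\alpha^*$, so $\tau_{\alpha^*}(-Y,\overline K)=-\tau_\alpha(Y,K)$ is not well posed. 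Step 4 should instead be phrased for all classes at once, via duality of the filtered complexes.
\end{enumerate}
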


\begin{corollary}\label{cor:taualpha}
    If $K\subset Y$ is slice in $Y\times I$, then $\tau_{\alpha}(Y,K)=0$ for all $\alpha\in \HFhat(Y)$.
\end{corollary}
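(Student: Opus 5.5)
This follows directly from Proposition~\ref{prop:talpha}. The plan is to read a slice disk in $Y\times I$ as a genus-zero surface and apply the genus bound.

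Suppose $K\subset Y$ is slice in $Y\times I$. Then there is a smoothly embedded disk $\D\into Y\times I$ with $\del\D=K\subset Y\times\{1\}$. First we check that the hypotheses of Proposition~\ref{prop:talpha} hold.
\begin{itemize}
\item $K$ is null-homologous in $Y$. This is automatic under our standing assumption that $Y$ is an integer homology sphere. More intrinsically, the disk itself shows $[K]=0$ in $H_1(Y\times I)\cong H_1(Y)$.
\item The disk is an oriented surface. It is orientable, and we orient it compatibly with $K$.
\item The disk is placed correctly. If it is given with boundary in $Y\times\{0\}$ instead, we reflect the $I$-factor by $t\mapsto 1-t$, which is a diffeomorphism of $Y\times I$. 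So we may assume $K\subset Y\times\{1\}$ as in the proposition.
\end{itemize}

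Next, apply Proposition~\ref{prop:talpha} with $\Sigma=\D$ and any nontrivial class $\alpha\in\HFhat(Y)$. Since $g(\D)=0$, this gives $|\tau_{\alpha}(Y,K)|\le 0$, hence $\tau_{\alpha}(Y,K)=0$. Here $\alpha$ ranges over nonzero classes, for which $\tau_\alpha$ is defined.

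There is no real obstacle; the only care needed is the orientation and boundary-placement bookkeeping above.
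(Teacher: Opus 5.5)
Your proof is correct and takes the same approach as the paper: both apply Proposition \ref{prop:talpha} to the slice disk, viewed as a genus-zero surface, to get $|\tau_{\alpha}(Y,K)|\le 0$. The paper phrases this as a contradiction and you argue directly; your extra checks (null-homology, orientation, which end of $Y\times I$ carries the boundary, and $\alpha$ ranging over nonzero classes) are sound and harmless.
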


\begin{proof}
    Recall that if $K$ is slice, then it bounds a smooth disk in $Y\times I$. If there exists some $\alpha\in\HFhat(Y)$ with $\tau_{\alpha}(K)\neq 0$, Proposition \ref{prop:talpha} implies that the genus of any such surface with boundary $K$ is nonzero, a contradiction.
\end{proof}

This is a generalization of the fact that the $\tau$-invariant obstructs sliceness in $B^{4}$, which means we can use the $\tau_{\alpha}$-invariants to obstruct shallow sliceness.

\section{Bordered Floer Theory}\label{sec:bordered}

The main tool used to compute the $\tau_\alpha$-invariant of the Whitehead double of the dual knot is \textit{bordered Heegaard Floer homology}. The most consolidated resource on the matter is \cite{lipshitz2018bordered}. Since its advent, bordered Heegaard Floer homology has led to significant results in 3-manifold topology and knot theory. In \cite{hanselman2023bordered,hanselman2022properties,hanselman2023immersed}, Hanselman, Rasmussen, and Watson use the framework of bordered Heegaard Floer homology to construct a very useful description of knot Floer homology using \textit{immersed curves}. Later, Chen and Hanselman developed immersed Heegaard Floer homology in \cite{chenhanselman}, which is a powerful tool to compute the knot Floer homology of satellite knots using immersed curves. Similar to standard Heegaard Floer homology, immersed Heegaard Floer homology involves Lagrangian intersection theory on a certain Heegaard diagram. The only difference is that the $\alpha$ curves may be immersed. For our results the immersed Heegaard diagrams are genus 1, the $\alpha$ curve corresponds to the immersed curve of the companion knot $K$, and the $\beta$ curve corresponds to the Heegaard diagram of the Whitehead pattern. In this section, we first introduce these two sets of curves used in immersed Heegaard Floer homology. Then we prove a suitable version of a pairing theorem to compute the knot Floer homology.

\subsection{Immersed Curves}\label{sec:immersedcurves}

The first piece in an immersed Heegaard diagram is the \textit{immersed multicurve} in the marked boundary of a knot compliment, the \textit{marked torus}. The marked torus is simply the standard torus, modeled as $\R^{2}/\Z^{2}$, where the $x$-axis is the \textit{preferred longitude} and the $y$-axis is the \textit{preferred meridian}, and a basepoint $z$ located at $(1-\epsilon, 1-\epsilon)$ for small $\epsilon>0$. An \textit{immersed multicurve} is a set of immersed curves in the marked torus away from $z$ decorated with \textit{local systems}, which we suppress, since our result does not concern them directly. We often refer to an immersed multicurve as an \textit{immersed curve}.

Immersed curves conveniently package the information of $\CFKm(Y,K)$ in the form of the \textit{bordered invariant}, sometimes denoted $\HFhat(Y\setminus\nu(K))$, which is an invariant of a bordered 3-manifold arising from a bordered Heegaard diagram. For discussions on the immersed curve formulation of bordered Heegaard Floer homology, see \cite{hanselman2023bordered}, \cite{hanselman2022properties}, and \cite{hanselman2023immersed}. Here, we only talk about the algorithm of getting the immersed curves directly from $\CFKm(Y,K)$, following the method in \cite{hanselman2022properties}.

To use this technique a particular basis is required for $\CFKm(Y,K)$, thought of as an $\F[U]$-module. Recall that $\CFKm(Y,K)$ comes with two filtrations, the action of multiplying by $U$ and the Alexander filtration. As in \cite{hanselman2022properties}, we may choose a representative of the chain homotopy type of $\CFKm(Y,K)$ for which the boundary map $\del^{-}$ strictly decreases one of these filtrations. A \textit{filtered basis} for $\CFKm(Y,K)$ is the set $\{v_{i}\}$ such that the equivalence classes $\{[v_{i}]\}$ in the associated graded complex $g\CFKm(Y,K)=\bigoplus \cF_{A(x)}/\cF_{A(x)-1}$ are a basis.

\begin{definition}[\cite{hanselman2022properties}]
    A filtered basis $\{v_{i}\}$ is \textit{vertically simplified} if for each $v_{i}$, either $\del v_{i}\in U\cdot\CFKm(Y,K)$ or $\del v_{i}\sim v_{j}+x$ where $x\in U\cdot \CFKm(Y,K)$. The filtered basis is \textit{horizontally simplified} if for each $v_{i}$ with Alexander filtration level $A(v_{i})=l$, either $A(\del v_{i})<l$ or $A(\del v_{i})=U^{k}v_{j}+x$ where $A(U^{k}v_{j})=l$ and $A(x)<l$.
\end{definition}

Being vertically simplified is equivalent to each basis generator only having one vertical arrow pointing to or away from it, and likewise for horizontally simplified. The complex in Figure \ref{fig:immersedexample} is both vertically and horizontally simplified. For any knot, $\CFKm(K)$ always admits a vertically simplified basis and always admits a horizontally simplified basis. As a warning, it may not be the case that $\CFKm(K)$ admits a basis which is simultaneously vertically and horizontally simplified. Nevertheless, we restrict our attention to when there is a basis which is both, in which case we just call the basis \textit{simplified}.

Following the method in \cite{hanselman2022properties}, we present the immersed curve representing $\CFKm(K)$ as a lift in the cover of the marked torus, $[-1/2,1/2]\times \R$, where the closed interval corresponds to the preferred longitude $\lambda$ and $\{-1/2\}\times \R$ and $\{1/2\}\times \R$ are identified and is a lift of the preferred meridian, $\mu$. We then center the lifts of the basepoint $z$ at $\{0\}\times \{n+1/2\}$ for $n\in\Z$. See Figure \ref{fig:immersedexample} for an example of this setup along with an example of a curve obtained from the procedure below.

\begin{proposition}[\cite{hanselman2022properties}]\label{prop:makethecurve}
    Given a horizontally and vertically simplified basis for \newline $\CFKm(K)$, a lift of the immersed multicurve $\alphas_{im}=\HFhat(S^3\setminus\nu(K))$ in the infinite strip can be obtained by the following procedure:
    \begin{enumerate}
        \item For each basis element $v_{i}$ of $\CFKm(K)$, place a short horizontal segment at \newline $[-1/4,1/4]\times\{t\}$ where $t=A(v_{i})$.
        \item If $\CFKm(K)$ contains a vertical arrow from $v_{i}$ to $v_{j}$, then connect the \textnormal{left} endpoints of the horizontal segments corresponding to $v_{i}$ and $v_{j}$ by a vertical arc.
        \item If $\CFKm(K)$ contains a horizontal arrow from $v_{i}$ to $v_{j}$, then connect the \textnormal{right} endpoints of the horizontal segments corresponding to $v_{i}$ and $v_{j}$ by a vertical arc.
        \item Connect the unique horizontal segment with an unattached left endpoint to $\{-1/2\}\times\{0\}$ and the unique horizontal line segment with an unattached right endpoint to $\{1/2\}\times\{0\}$ each with an arc.
    \end{enumerate}
\end{proposition}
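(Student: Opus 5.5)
This proposition is imported from Hanselman--Rasmussen--Watson, so my plan is to reconstruct why the procedure gives the correct curve, not to prove anything new. The framework is the immersed-curve description of bordered Floer homology. By \cite{hanselman2023bordered}, $\HFhat(S^3\setminus\nu(K))$ is the type D structure $\widehat{CFD}(S^3\setminus\nu(K))$ over the torus algebra, viewed as a train track in the marked torus and then simplified to an immersed multicurve. By the Lipshitz--Ozsv\'ath--Thurston computation of $\widehat{CFD}$ of a knot complement from $\CFKm(K)$ (Theorem 11.26 of \cite{lipshitz2018bordered}), we can write $\widehat{CFD}$ down explicitly. So the plan is to start from a simplified basis $\{v_i\}$, write $\widehat{CFD}$, translate it into curve segments, and check that the translation is exactly the four-step procedure.

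\textbf{Step 1: write down $\widehat{CFD}$.} Its $\iota_0$-part is spanned by the $v_i$. Each vertical arrow $v_i\to v_j$ of length $\ell$ contributes a chain of $\ell$ generators in the $\iota_1$-part, joined by the coefficient maps $D_1$, $D_{23}$, $D_{123}$. Each horizontal arrow of length $\ell$ contributes a chain of $\ell$ generators joined by $D_3$, $D_{23}$, $D_{2}$. One extra unstable chain, governed by the framing, joins the distinguished generator of the vertically simplified basis to the distinguished generator of the horizontally simplified basis. In a simultaneously simplified basis these two distinguished generators are simply two of the $v_i$ (the generators with no vertical, respectively no horizontal, arrow), so the whole structure is read off one basis.

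\textbf{Step 2: translate to curves.} Use the HRW dictionary on the square with the puncture at $z$. An $\iota_0$ generator is an intersection of the curve with $\mu$. In the lift to the strip, the generator $v_i$ sits at height $A(v_i)$, since the Alexander grading records the $\mu$-coordinate through the spin$^c$ decomposition. An $\iota_1$ generator is an intersection with $\lambda$. Each arrow type becomes an arc that turns around the puncture on a fixed side. Concretely, a vertical chain of length $\ell$ becomes an arc leaving the left end of $v_i$'s segment and passing $\ell$ lifts of $z$ on the left, reaching height $A(v_j)=A(v_i)-\ell$. A horizontal chain becomes the analogous arc on the right side. After homotopy in the punctured strip, this gives exactly steps (1)--(3). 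The unstable chain becomes an arc running through the $\lambda$-edge $\{\pm1/2\}$. For $0$-framing it crosses the strip with no net winding, and it closes up through $\{\pm 1/2\}\times\{0\}$, which is step (4).

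\textbf{Main obstacle.} I expect the unstable chain and the absolute height normalization to be the main obstacle. One must check that the two unmatched endpoints really meet the boundary at height $0$. The argument I would try is to use the symmetry of $\CFKm$ together with $\tau$: the unmatched left endpoint sits at height $\tau(K)$ and the unmatched right endpoint at height $-\tau(K)$. The connecting arcs then go to $0$ at the boundary, consistent with Alexander gradings being centered. Finally, one must confirm that the train-track simplification leaves no crossover arrows; this holds automatically because the basis is simultaneously simplified, so the curve is honest and no local systems are needed.
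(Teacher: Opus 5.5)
Your reconstruction is correct, and it is essentially the standard derivation behind this statement. The paper gives no proof of its own and imports the result from Hanselman--Rasmussen--Watson, who obtain it by your route: feed the Lipshitz--Ozsv\'ath--Thurston description of $\widehat{CFD}(S^3\setminus\nu(K))$ into the type-D-structure-to-curve dictionary, where the simultaneously simplified basis makes every generator bivalent, so no crossover arrows or nontrivial local systems arise. The step you flag as the main obstacle is only a normalization: the arc of step (4) crosses the identified sides $\{\pm 1/2\}\times\R$, which contain no lifts of $z$, so its crossing height can be moved freely by homotopy, and the substantive input is just that the unmatched left and right endpoints sit at heights $\tau(K)$ and $-\tau(K)$, as you note.
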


We often denote the curve $\alphas_{im}$ as $\alphas_{K}$ when we wish to emphasize the knot. Given an immersed curve $\alphas_{K}$ coming from Proposition \ref{prop:makethecurve}, we can read off a few invariants. To do this, it is helpful to pull the immersed curve tight, to create a so-called \textit{pegboard diagram} as in Figure \ref{fig:pegboard}. For a knot, the immersed curve pulled tight will be vertical strands in the neighborhood of $\{0\}\times\R$, where the ``pegs'' are located, and one homologically horizontal strand which wraps around the infinite strip.

The genus of the knot, $g(K)$, is simply the difference between the maximum height achieved by $\alphas_{K}$ and the minimum height achieved, rounded to the nearest integer when pulled tight. This is the same as checking how many pegs are encompassed by the curve and dividing by 2. The $\tau$-invariant can be see by starting anywhere on the horizontal strand and tracing to the right (in the positive interval direction) until hitting the vertical strand. The nearest integer height where they meet is $\tau(K)$. Finally, $\epsilon(K)$, defined in \cite{hom2014bordered}, is shown by the behavior of the horizontal line segment after crossing the vertical portion. If the curve has an upward slope, $\epsilon(K)=1$. If downward, $\epsilon(K)=-1$. If it continues straight, which is only possible if $\tau(K)=0$ by Proposition \ref{prop:makethecurve}, then $\epsilon(K)=0$ as well. For example, from Figure \ref{fig:pegboard}, we see that $\tau(T_{2,3})=1$ and $\epsilon(T_{2,3})=1$, since the horizontal curve first intersects above the higher basepoint when traveling to the right (along the arrow indicating $\lambda$).

\begin{figure}
    \centering
    \subfigure[]{
    \begin{tikzpicture}
	\begin{scope}[thin, gray]
		\draw [-] (-3, 0) -- (1, 0);
		\draw [-] (0, -3) -- (0, 3);
	\end{scope}
	\draw[step=1, black!30!white, very thin] (-2.9, -2.9) grid (0.9, 2.9);
	
	\filldraw (0, 1) circle (2pt) node[] (a2){};

	\node [left] at (a2) {$a$};
	
	\filldraw (-1, 0) circle (2pt) node[] (a3){};
	\filldraw (0, 0) circle (2pt) node[] (b3){};
	\filldraw (0, -1) circle (2pt) node[] (c3){};
	\draw [very thick, <-] (a3) -- (b3);
	\draw [very thick, <-] (c3) -- (b3);
	\node [left] at (a3) {$Ua$};
	\node [above] at (b3) {$b$};
	\node [below] at (c3) {$c$};

	\filldraw (-2, -1) circle (2pt) node[] (a4){};
	\filldraw (-1, -1) circle (2pt) node[] (b4){};
	\filldraw (-1, -2) circle (2pt) node[] (c4){};
	\draw [very thick, <-] (a4) -- (b4);
	\draw [very thick, <-] (c4) -- (b4);
	\node [left] at (a4) {$U^2a$};
	\node [above] at (b4) {$Ub$};
	\node [below] at (c4) {$Uc$};
	
	\filldraw (-3, -2) circle (2pt) node[] (a5){};
	\filldraw (-2, -2) circle (2pt) node[] (b5){};
    \filldraw (-2,-3) circle (2pt) node[] (c5){};
	\draw [very thick, <-] (a5) -- (b5);
    \draw [very thick, ->] (b5) -- (c5);
	\node [left] at (a5) {$U^3a$};
	\node [above] at (b5) {$U^2b$};
    \node [below] at (c5) {$U^{2}c$};
	
	\filldraw (-2.6, -2.8) circle (0.8pt);
	\filldraw (-2.7, -2.9) circle (0.8pt);
	\filldraw (-2.8, -3) circle (0.8pt);
    \end{tikzpicture}
    }
    \subfigure[]{
    \includegraphics[width=\linewidth]{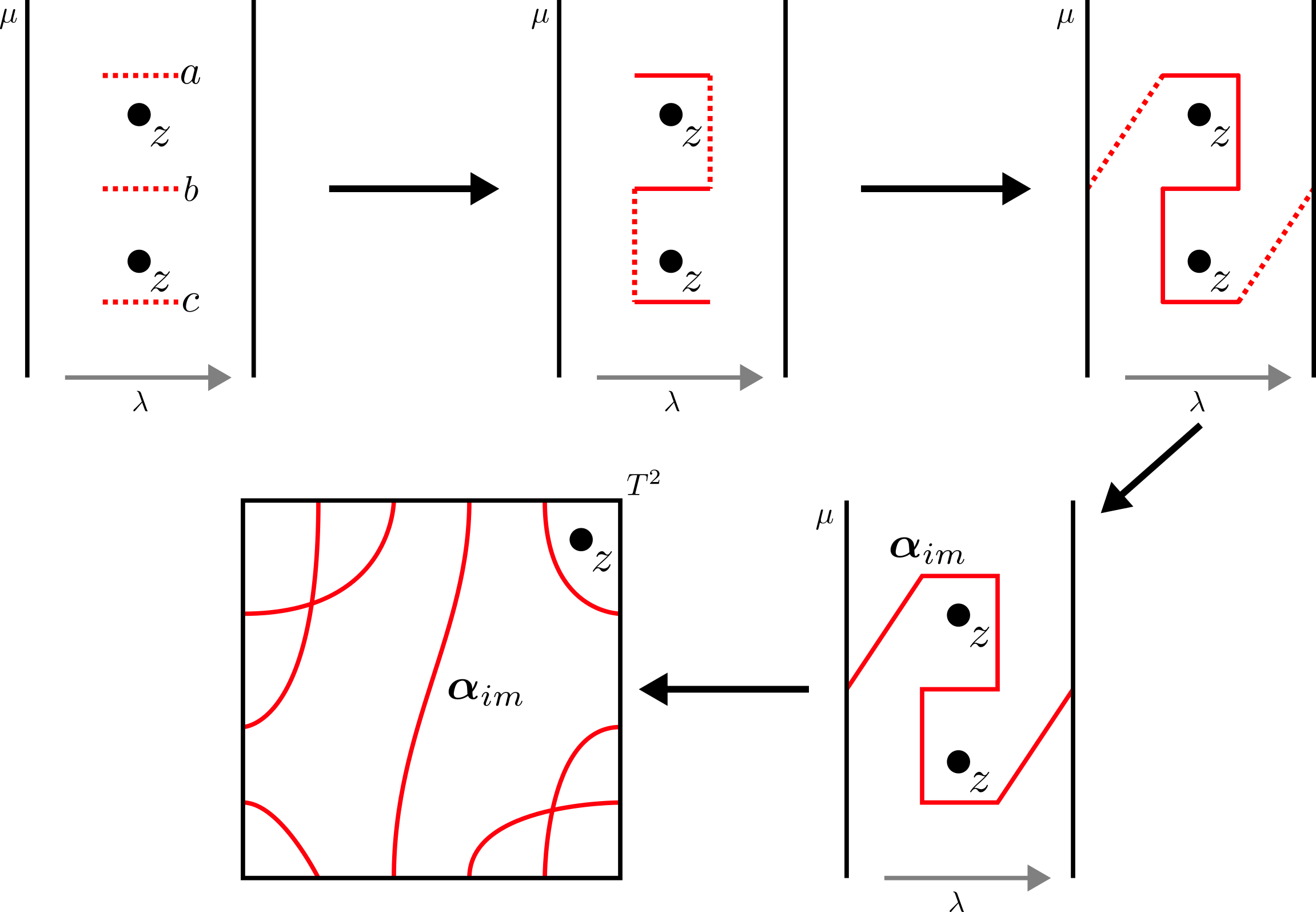}
    }
    \caption{(a) $\CFKm(T_{2,3})$. (b) Each step of the construction in Proposition \ref{prop:makethecurve}, with a projection to the marked torus on the bottom left.}
    \label{fig:immersedexample}
\end{figure}

\subsection{Doubly-pointed Bordered Heegaard Diagrams}

The second piece we need is a doubly-pointed bordered Heegaard diagram. A doubly-pointed bordered Heegaard diagram for a pair $(Y,K)$ is a bordered Heegaard diagram with extra basepoints, so that it describes a knot $K$ in the bordered manifold $Y$. In our case, $Y$ is a solid torus, and $K$ is the pattern knot. We start with the following information to encode a \textit{bordered 3-manifold} $(Y,\cZ,\phi)$, where $\cZ$ and $\phi$ are auxiliary data specifying a parametrization of the boundary of $Y$.

\noindent
\begin{minipage}[r]{\textwidth}
\begin{definition}[Bordered Heegaard Diagram]\label{def:borderedhd}
    A \textit{bordered Heegaard diagram} for a smooth 3-manifold $Y$ with boundary is a tuple $(\Sbar, \alphas, \betas, w)$ where
    \begin{itemize}
        \item $\Sbar$ is a genus $g$ surface with a single boundary component,
        \item $\betas$ is a collection of $g$ pairwise disjoint properly embedded simple closed curves in the interior of $\Sbar$ which are linearly independent in $H_{1}(\Sbar;\Z)$,
        \item $\alphas$ is a collection of $g-k$ pairwise properly disjoint embedded simple closed curves $\alphas^{c}:=\{\alpha^{c}_{1},...,\alpha^{c}_{g-k}\}$ in the interior of $\Sbar$ and $2k$ pairwise disjoint properly embedded arcs $\alphas^{a}:=\{\alpha^{a}_{1},...,\alpha^{a}_{2k}\}$ in $\Sbar$ with transverse intersection with $\del \Sbar$, and
        \item $w$ is a point on $\del\Sbar\setminus (\alphas\cap\del\Sbar)$.
    \end{itemize}
\end{definition}
\end{minipage}
\indent

\begin{figure}
    \centering
    \includegraphics[width=0.6\linewidth]{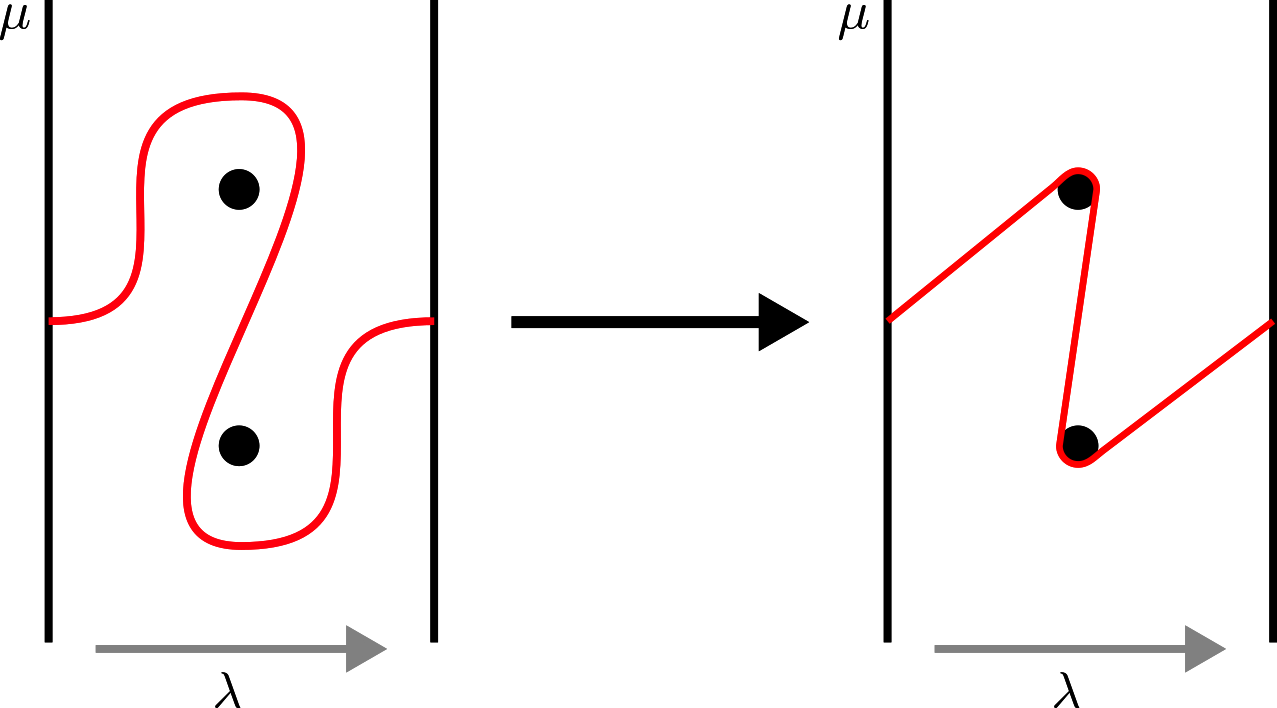}
    \caption{Left: The immersed curve for $T_{2,3}.$ Right: The pegboard for $T_{2,3}.$}
    \label{fig:pegboard}
\end{figure}

Given a bordered Heegaard diagram, we can reconstruct the 3-manifold in the usual way, and recover the parametrization of its boundary using the $\alphas$ arcs. For further details, see \cite{lipshitz2018bordered}.

As we have come to expect, we have the following theorem.

\begin{theorem}[\cite{lipshitz2018bordered}]
    Any bordered 3-manifold can be represented by some bordered Heegaard diagram.
\end{theorem}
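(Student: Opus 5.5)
The statement is the existence result of Lipshitz--Ozsv\'{a}th--Thurston: every bordered 3-manifold $(Y,\cZ,\phi)$ is represented by some bordered Heegaard diagram in the sense of Definition \ref{def:borderedhd}. The plan is to start from a self-indexing Morse function on $Y$ that is compatible with the boundary parametrization and read the diagram off its handle decomposition, mirroring the closed-case proof that Heegaard diagrams exist.

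First, I would fix the boundary data. The parametrization $\phi$ identifies $\del Y$ with the surface $F(\cZ)$ built from the pointed matched circle $\cZ$. Hence $\del Y$ is a disk $D$ together with $k$ one-handles whose cores are determined by the matched pairs, followed by a capping two-handle. Choose a collar $\del Y\times[0,1]\subset Y$. On the collar, define a function whose level sets sweep out $D$, then the $k$ boundary one-handles, then the cap, so that the basepoint $w$ lies on the boundary circle $Z$ of $D$.

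Second, I would extend this to a self-indexing Morse function $f$ on the rest of $Y$. The extension should have no index-$0$ or index-$3$ critical points in the interior, beyond a single index-$0$ point that can be cancelled against the disk $D$; this uses that $Y$ is connected with connected boundary. Let $g-k$ be the number of interior index-$1$ critical points and $g$ the number of index-$2$ critical points; their counts are forced by Euler characteristic. The middle level set $f^{-1}(3/2)$ is then a genus $g$ surface with boundary, $\Sbar$, where the boundary circle comes from $Z$.

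Third, I would define the curves. The $\betas$ curves are the ascending-sphere intersections of the index-$2$ points with $\Sbar$; these are $g$ disjoint simple closed curves. The $\alphas^{c}$ curves are the descending spheres of the $g-k$ interior index-$1$ points. The $2k$ arcs $\alphas^{a}$ are the traces in $\Sbar$ of the cores of the boundary one-handles, which meet $Z$ at the $4k$ points of $\cZ$ in the matched pattern. Disjointness and transversality follow after a generic perturbation of the gradient-like vector field. Linear independence of the $\betas$ in $H_{1}(\Sbar)$ comes from the fact that compressing $\Sbar$ along them yields a sphere with one boundary component. The same argument applies to the $\alphas$ curves together with the arcs, since compressing along them yields the correct handlebody.

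The main obstacle is making the Morse function and gradient-like vector field genuinely compatible with the prescribed parametrization $\phi$. In particular, the $\alpha$-arcs must hit $\del\Sbar$ exactly in the order and matching dictated by $\cZ$, and $w$ must avoid them. I would handle this by building the compact region near $\del Y$ by hand as the standard model $F(\cZ)\times[0,1]$ with its standard handle structure, and only then extending generically over the interior using relative Morse theory. Independence of the result from choices is not needed for existence.
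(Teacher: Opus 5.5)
The paper gives no proof of this statement; it cites Lipshitz--Ozsv\'ath--Thurston, whose argument is the standard Morse-theoretic adaptation of the closed case. Your overall strategy is therefore the right one. Two identifications are wrong as written, though. For a self-indexing $f$ on a $3$-manifold, the $\alphas^{c}$ circles are the intersections of $\Sbar$ with the \emph{ascending} manifolds of the interior index-$1$ points. The $\betas$ circles are the intersections with the \emph{descending} manifolds of the index-$2$ points. The descending sphere of an index-$1$ point, and the ascending sphere of an index-$2$ point, are $0$-spheres lying below (resp.\ above) level $3/2$, so they never meet $\Sbar$. Also, $F(\cZ)$ has $2k$ one-handles, not $k$.

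The substantive problem is at the boundary, which is exactly the step you flag as the crux. If the levels of $f|_{\del Y}$ sweep out $D$ first, then the one-handles, then the cap, then $Z=\partial D$ lies below the boundary index-$1$ levels. There are two possibilities, and neither gives $\alpha$-arcs ending on the points of $\cZ$ on $Z$:
\begin{itemize}
\item If those critical points lie below $3/2$, then $f^{-1}(3/2)\cap\del Y$ is the boundary of the cap rather than $Z$, and the arc endpoints come from cocores, not from the matched pairs of $\cZ$.
\item If they lie above $3/2$, then the traces of the one-handle cores on $\Sbar$ come from descending half-disks and are $\beta$-arcs.
\end{itemize}
So the model must be reversed. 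You also need to specify the normal behaviour of $f$ at the boundary critical points, which a function on a collar ``sweeping out'' $\del Y$ does not do.

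Concretely, take a gradient-like field tangent to $\del Y$ and set $f|_{\del Y}=f_{\cZ}\circ\phi^{-1}$, where $f_{\cZ}$ has:
\begin{itemize}
\item its minimum in the capping disk;
\item $2k$ index-$1$ points at level $1$, whose ascending arcs are the cores of the one-handles attached to $D$;
\item its maximum at the centre of $D$, at level $3$.
\end{itemize}
Arrange that $f$ increases into $Y$ at the boundary minimum and at the boundary index-$1$ points, and decreases into $Y$ at the boundary maximum. These points are then a local minimum, ``half'' index-$1$ points, and a local maximum of $f$.

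Interior index-$0$ and index-$3$ points are cancelled against interior index-$1$ and index-$2$ points by the usual argument using connectedness. Nothing is ``cancelled against the disk $D$'', since cancellation pairs critical points of adjacent index.

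With this model, $\Sbar=f^{-1}(3/2)$ meets $\del Y$ exactly in $Z$. The ascending manifold of each boundary index-$1$ point is a half-disk meeting $\Sbar$ in an $\alpha$-arc whose endpoints are a matched pair of $\cZ$. You can place $w$ on a boundary flowline from the minimum to the maximum that avoids these arcs. Your Euler characteristic count then goes through; note that it only forces $\#\{\text{index }2\}-\#\{\text{interior index }1\}=k$, not both counts. Your genus count and your connectivity arguments for $\Sbar\setminus\alphas$ and $\Sbar\setminus\betas$ also go through.
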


We now incorporate the information of a knot in a bordered manifold by adding an extra basepoint to the bordered diagram.

\begin{definition}[Doubly-pointed bordered Heegaard diagram]
    A \textit{doubly-pointed bordered Heegaard diagram} for $Y$ compatible with a knot $K\into Y$ is a tuple $(\Sbar, \alphas, \betas, w, z)$ where
    \begin{itemize}
        \item $(\Sbar, \alphas, \betas, w)$ is a bordered Heegaard diagram for $Y$ as in Definition \ref{def:borderedhd} and
        \item $z$ is a basepoint along with $w$ in $\Sbar\setminus(\alpha^{c}\cup\betas)$ such that $K$ can be recovered from $w$ and $z$.
    \end{itemize}
\end{definition}

As with the traditional case, every knot in a bordered 3-manifold can be realized by a doubly-pointed bordered Heegaard diagram. Figure \ref{fig:whiteheaddiagram} shows a doubly-pointed Heegaard diagram for the Whitehead pattern.

\begin{figure}
    \centering
    \includegraphics{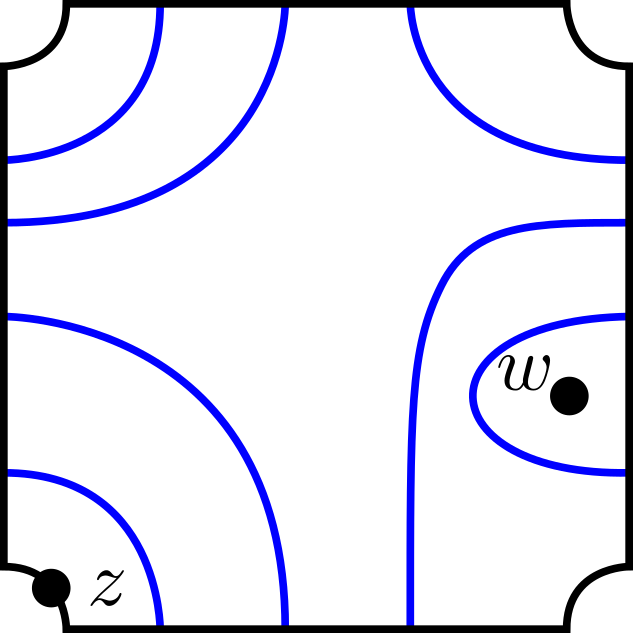}
    \caption{A genus 1 Heegaard diagram for the Whitehead pattern, $D_{+}$. We think of the two arcs $\{\alpha_{1}^{a}, \alpha_{2}^{a}\}$ as $\mu$ and $\lambda$, the two sides of $\Sbar$, the punctured torus ($g=1$). They intersect $\del \Sbar$ as required.}
    \label{fig:whiteheaddiagram}
\end{figure}

\subsection{Immersed Heegaard Floer homology}

We introduce the immersed Heegaard Floer homology from \cite{chenhanselman} to pair the two pieces above to compute the knot Floer homology of the satellite knot.

\begin{definition}\label{def:immersedhd}
    An \textit{immersed doubly-pointed Heegaard diagram} is a tuple $\Hee=(\Sg, \alphas, \betas, w, z)$ where
    \begin{itemize}
        \item $\Sg$ is a closed oriented genus $g$ surface,
        \item $\alphas=\{\alpha_{1},...,\alpha_{g-1}, \alphas_{g}\}$ is a collection of curves in $\Sg$ where $\{\alpha_{1},...,\alpha_{g-1}\}$ are embedded and disjoint, $\alphas_{g}=\{\alpha_{g}^{1},...,\alpha_{g}^{n}\}$ is a collection of immersed curves decorated with local systems for which $\alpha_{g}^{1}$ has the trivial local system, $\{\alpha_{1},...,\alpha_{g-1},\alpha_{g}^{1}\}$ are linearly independent in $H_{1}(\Sg;\Z)$, and each $\alpha_{g}^{i}$ is trivial in $H_{1}(\Sg;\Z)/(\alpha_{1},...,\alpha_{g-1})$ for $i>1$,
        \item $\betas=\{\beta_{1},...,\beta_{g}\}$ is a collection of embedded disjoint curves in $\Sg$ which are linearly independent in $H_{1}(\Sg;\Z)$, and
        \item $w$ and $z$ are basepoints on $\Sg$ lying in the same component of $\Sg\setminus\alphas$ and in the same component of $\Sg\setminus\betas$.
    \end{itemize}
\end{definition}

The local systems we consider in this paper are all trivial. Also, we often denote $\alphas_{g}$ by $\alphas_{im}$ for \emph{immersed} alpha curves, following conventions in \cite{chenhanselman}. As usual, there are admissibility conditions on these kinds of Heegaard diagrams. See \cite{chenhanselman} for details.

\begin{definition}\label{def:CFKbordered}
    Given an immersed doubly-pointed Heegaard diagram $\cH=(\Sg,\alphas,\betas,w, z)$, the \textit{``$UV$ equals zero'' knot Floer complex} of $\cH$ is freely generated over $\cR=\F[U,V]/(UV)$ by all intersection points $\x\in\Ta\cap\Tb$, denoted $$CFK_{\cR}(\cH):=\bigoplus_{\x\in\Ta\cap\Tb}\cR\langle\x\rangle,$$ with boundary map defined on generators by $$\partial_{\cR}(\x):=\sum_{y\in \mathbb{T}_{\alpha}\bigcap\mathbb{T}_{\beta}}\sum_{\substack{\phi\in\pi_{2}(x,y)\\ \mu(\phi)=1}}\#\widehat{\mathcal{M}}(\phi)U^{n_{w}(\phi)}V^{n_{z}(\phi)}\y.$$
\end{definition}

Working over the ring $\cR=\F[U,V]/(UV)$, often called the ``$UV$ equals zero'' ring is useful in eliminating pesky arrows in the chain complex. The complex $\CFKR(\cH)$ supports a bigrading by $(\gru,\grv)$ or by $(\gru, A)$. The following theorems guarantee an invariant chain complex.

\begin{theorem}[\cite{chenhanselman}]
    The complex $(\CFKR(\cH),\del_{\cR})$ is a chain complex, i.e. $\del_{\cR}^{2}=0$.
\end{theorem}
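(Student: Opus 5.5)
We need to show $\partial_{\cR}^{2}=0$ on $\CFKR(\cH)$ for an immersed doubly-pointed diagram. The plan is the usual Gromov compactness argument, adapted to immersed $\alpha$ curves. Fix generators $\x,\y$ and a class $\psi\in\pi_{2}(\x,\y)$ with $\mu(\psi)=2$. The coefficient of $U^{a}V^{b}\y$ in $\partial_{\cR}^{2}\x$ is a sum, over all such $\psi$ with $n_{w}(\psi)=a$ and $n_{z}(\psi)=b$, of the number of broken flowlines $\phi_{1}*\phi_{2}=\psi$ with $\mu(\phi_{i})=1$. Because we work mod $UV$, only classes with $a=0$ or $b=0$ contribute. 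So we may restrict to classes avoiding $w$ or avoiding $z$.

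The argument proceeds in three steps.

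\textbf{Step 1: compactness.} Assuming the admissibility hypothesis from \cite{chenhanselman}, the relevant moduli spaces are finite. For a generic path of almost complex structures, the one-dimensional space $\widehat{\mathcal M}(\psi)$ then admits a compactification.

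\textbf{Step 2: classifying the ends.} The ends of this compactified space are of three kinds:
\begin{itemize}
\item two-story buildings, which contribute to $\partial_{\cR}^{2}$;
\item $\beta$-boundary degenerations;
\item $\alpha$-boundary degenerations.
\end{itemize}
Since the $\beta$ curves are embedded and linearly independent, the standard Ozsváth–Szabó argument applies to the $\beta$-side. A $\beta$-degeneration has domain a component of $\Sg\setminus\betas$. Because $w$ and $z$ lie in the same such component, it covers both basepoints and is killed by $UV=0$. This is exactly why the ring $\cR$ is used.

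\textbf{Step 3: the $\alpha$-side, the main obstacle.} Because $\alphas_{g}$ may be immersed, there are new degenerations: immersed (teardrop-like) disks with boundary on the immersed curve, possibly with a single corner at a self-intersection. Such ends would give the curvature term $\partial^{2}\neq 0$. I would rule them out as follows. Using the hypotheses of Definition~\ref{def:immersedhd}, each component of the immersed multicurve is, up to homotopy, non-nullhomotopic and unobstructed in the punctured torus. For the curves from Proposition~\ref{prop:makethecurve}, after homotopy the curve bounds no immersed monogons or disks avoiding the puncture, and any such domain in $\Sg$ must cover both $w$ and $z$. This follows because $w$ and $z$ lie in one component of $\Sg\setminus\alphas$, and a domain whose boundary lies on $\alphas$ is a union of such components with constant multiplicity over each.

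Any remaining $\alpha$-degeneration therefore carries the factor $U^{\ge1}V^{\ge1}=0$. The one exception is a genuinely zero-area disk, which would contradict the unobstructedness/admissibility of the diagram.

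The delicate part is making the teardrop exclusion rigorous. This requires checking that the multiplicity argument applies to domains with a corner at a self-intersection, and handling nontrivial local systems by a parallel-transport bookkeeping that cancels in pairs.

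Once these degenerations are excluded or shown to vanish in $\cR$, the remaining ends are the two-story buildings. Their count is zero mod $2$ because the compactified one-manifold has an even number of ends. This gives $\partial_{\cR}^{2}=0$.
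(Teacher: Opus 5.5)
The paper gives no proof of this statement; it quotes it from \cite{chenhanselman}. There the result carries an unobstructedness hypothesis and an admissibility hypothesis, which this paper only alludes to (``As usual, there are admissibility conditions\ldots''). Your architecture is the right one: compactness for a generic path, ends of index-two moduli spaces, $\beta$-degenerations covering all of $\Sg$ and so dying in $\cR$, and $\alpha$-teardrops as the real issue. The gap is in Step~3. The unobstructedness you need does not follow from Definition~\ref{def:immersedhd}, and the reason you give for it is not valid.

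Knowing that $w$ and $z$ lie in one component of $\Sg\setminus\alphas$ only gives $n_w(B)=n_z(B)$ for an $\alpha$-bounded domain $B$. It says nothing about this common value being positive. Writing $B$ as a combination of components of $\Sg\setminus\alphas$ does not help, since $B$ can simply avoid the component containing the basepoints.

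Concretely, insert a small fishtail kink in $\alphas_{g}$ far from $w,z$. All conditions of the definition still hold. The loop is a positive one-cornered $\alpha$-bounded domain $T$ with $n_w=n_z=0$. It is not a periodic domain, because its boundary is only an arc of $\alphas_{g}$, so admissibility does not exclude it either. It genuinely obstructs. Suppose $\beta$ crosses the loop at $x_1,x_2$, and the strand entering the kink meets $\beta$ at $y$ so that $y$, $x_1$ and the double point bound a triangle. Then that triangle together with $T$ is an index-two domain between $y$ and $x_1$ with an obtuse corner at $x_1$. Cutting along $\beta$ gives a broken-flow end through $x_2$. Cutting along $\alpha$ ends in a teardrop bubble. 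So the broken flow is unpaired, and $\partial_{\cR}^{2}\neq 0$ in general. Your ``zero-area disk'' exception is therefore not the issue, since the obstructing teardrops have positive area. ``Up to homotopy'' does not help either: the complex is computed from the actual curve, and a homotopy can create such kinks.

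The fix is to \emph{assume} unobstructedness, meaning no positive zero- or one-cornered $\alpha$-bounded domain with $n_w=0$ (equivalently $n_z=0$). Then take a class $\psi$ with $n_w(\psi)=0$ or $n_z(\psi)=0$. Every degeneration appearing as an end has domain dominated by $\psi$, so it misses one basepoint, hence both, hence is excluded. Only broken flows remain, and $\partial_{\cR}^{2}=0$ follows.

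For the pairing diagrams used in the paper, this hypothesis must be verified for the specific representative of $\alphas_K$. For example, if the curve is pulled tight in the punctured torus, every teardrop must contain the puncture, where $w$ sits. It cannot simply be read off from the definition.
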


\begin{theorem}[\cite{chenhanselman}]
    The bigraded chain homotopy type of $(\CFKR(\cH),\del_{\cR})$ is invariant under isotopies of the $\alphas$ curves and $\betas$ curves, handleslides, and (de)stabilization of the Heegaard diagram, $\cH$.
\end{theorem}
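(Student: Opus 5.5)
This statement is a cited result from \cite{chenhanselman}, so the plan is to reduce to the embedded case and then adapt each invariance step to an immersed $\alpha$ curve. Standard invariance arguments in Heegaard Floer theory handle $\betas$ isotopies, handleslides among the $\beta$ curves and the embedded $\alpha_1,\dots,\alpha_{g-1}$, and (de)stabilizations. In each case one constructs continuation maps, or triangle maps into a top generator, and shows these are filtered chain homotopy equivalences respecting $n_w$ and $n_z$. Tracking $n_w$ and $n_z$ through the holomorphic polygons gives compatibility with the powers of $U$ and $V$. It also shows that the bigrading $(\gru,\grv)$ is preserved, since each domain contributes its multiplicities at $w$ and $z$ additively.

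For the immersed component $\alphas_{im}$, I would proceed in three steps.

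\begin{itemize}
\item \textbf{Isotopies of $\alphas_{im}$.} Here I would use the Lagrangian Floer description: regular homotopies of an immersed curve avoiding the basepoints induce continuation maps. This requires unobstructedness, meaning no teardrop or immersed monogon bubbling that avoids $w$ and $z$. I would assume this follows from the admissibility hypotheses in \cite{chenhanselman}; that is exactly why those conditions are imposed.
\item \textbf{Handleslides of $\alphas_{im}$ over the embedded $\alpha_i$.} Here I would use a triangle-counting argument with a small Hamiltonian-perturbed copy of the $\alpha$ tuple. The key input is that the immersed curve paired with its own pushoff has a well-defined top-graded cycle $\Theta$.
\item \textbf{Stabilization.} This happens away from $\alphas_{im}$, so the standard neck-stretching argument applies verbatim.
\end{itemize}

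To show $\partial_{\cR}^{2}=0$ along the way, I would analyze the ends of the one-dimensional moduli spaces of Maslov index two classes. Degenerations into two index-one disks give $\partial_{\cR}^{2}$. The remaining ends are boundary degenerations and teardrops. Working over $\cR=\F[U,V]/(UV)$ kills any such contribution that passes through both basepoints. The others must be ruled out or cancelled in pairs. The admissibility condition is what forces every immersed teardrop component to cover both $w$ and $z$.

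The main obstacle is the immersed curve itself: controlling teardrop bubbling, and showing that the $\Theta$ cycle behaves well for an immersed Lagrangian paired with its perturbation. I would first try to reduce to the genus one setting via the pairing with bordered type D structures. This would make the complex identifiable with a box tensor product, where invariance is known from bordered Floer theory. Realistically, however, the correct course here is to cite \cite{chenhanselman} rather than reprove it.
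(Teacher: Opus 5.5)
The paper gives no argument of its own for this statement; it simply quotes \cite{chenhanselman}. So your closing recommendation to cite agrees with it. As a proof plan, however, your sketch has a genuine gap, at exactly the step you flag as the main obstacle: you expect the admissibility hypotheses to control teardrop bubbling, and they cannot.

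Bi-admissibility is a condition on periodic domains, whose boundaries are integer combinations of \emph{entire} curves. It does force a positive zero-cornered $\alpha$-degeneration to cover both $w$ and $z$. A teardrop, though, is a one-cornered domain with convex corner at a self-intersection of $\alphas_{im}$. It is not a periodic domain, and admissibility says nothing about it. For instance, add a small fishtail kink to $\alphas_{im}$ in a region away from the basepoints. The diagram stays bi-admissible, since every periodic domain extends unchanged outside the kink. But the kink bounds a positive teardrop with $n_w=n_z=0$, which can bubble off at the ends of index-two moduli spaces with nothing to cancel it. So $\partial_{\cR}^{2}$ need not vanish, and continuation maps need not be chain maps.

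Also note that $w$ and $z$ lie in one component of $\Sg\setminus\alphas$, so every domain with boundary on the $\alpha$ curves has $n_w=n_z$. Over $\cR$, any such teardrop through a basepoint therefore dies automatically ($U^{k}V^{k}=0$). The only dangerous teardrops are the basepoint-free ones, which are exactly the ones admissibility cannot detect.

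The missing ingredient is a separate \emph{unobstructedness} hypothesis on $\alphas_{im}$: no positive zero- or one-cornered $\alpha$-bounded domains avoiding the basepoints. This is what \cite{chenhanselman} impose alongside bi-admissibility, and it must persist through the moves. The statement as quoted in the paper tacitly carries these hypotheses; without them even $\partial_{\cR}^{2}=0$ can fail. Every step of your outline depends on this hypothesis:
\begin{itemize}
\item the $\betas$-isotopies and $\betas$-handleslides you treat as standard, because their polygon maps also degenerate by teardrop bubbling along the $\alphas_{im}$ edge;
\item the existence of a closed top generator $\Theta$ for $\alphas_{im}$ against its pushoff, because each self-intersection of $\alphas_{im}$ contributes extra generators and teardrop-type differentials to that complex.
\end{itemize}
Finally, your proposed reduction to a box tensor product only applies to diagrams of pairing form. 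It does not cover an arbitrary immersed diagram or the intermediate diagrams of a handleslide, so it cannot substitute for these arguments.
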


\subsection{Pairing Theorems}

We can pair a doubly-pointed bordered Heegaard diagram $\cH$ as in Definition \ref{def:borderedhd} and an immersed curve $\alphas_{im}$ as described in Section \ref{sec:immersedcurves} by gluing to obtain a doubly-pointed immersed Heegaard diagram $\cH(\alphas_{im})$ as in Definition \ref{def:immersedhd}. While a more detailed description can be found in \cite{chenhanselman}, essentially, the two diagrams are glued along their common boundary, thought of as ``filling in'' the bordered Heegaard diagram with the immersed curve. After performing certain isotopies, the resulting doubly-pointed immersed Heegaard diagram looks like a superposition of the bordered Heegaard diagram and the immersed curve. Figure \ref{fig:pairing} illustrates this process. We can remove immersed points by looking at the curves in various lifts of the torus, such as the infinite strip. To see why this is useful, it is best to introduce the necessary pairing theorem, as it is one of the primary theorems applied to prove Theorem \ref{thm:deepslice}.

\begin{figure}
    \centering
    \includegraphics[scale=0.65]{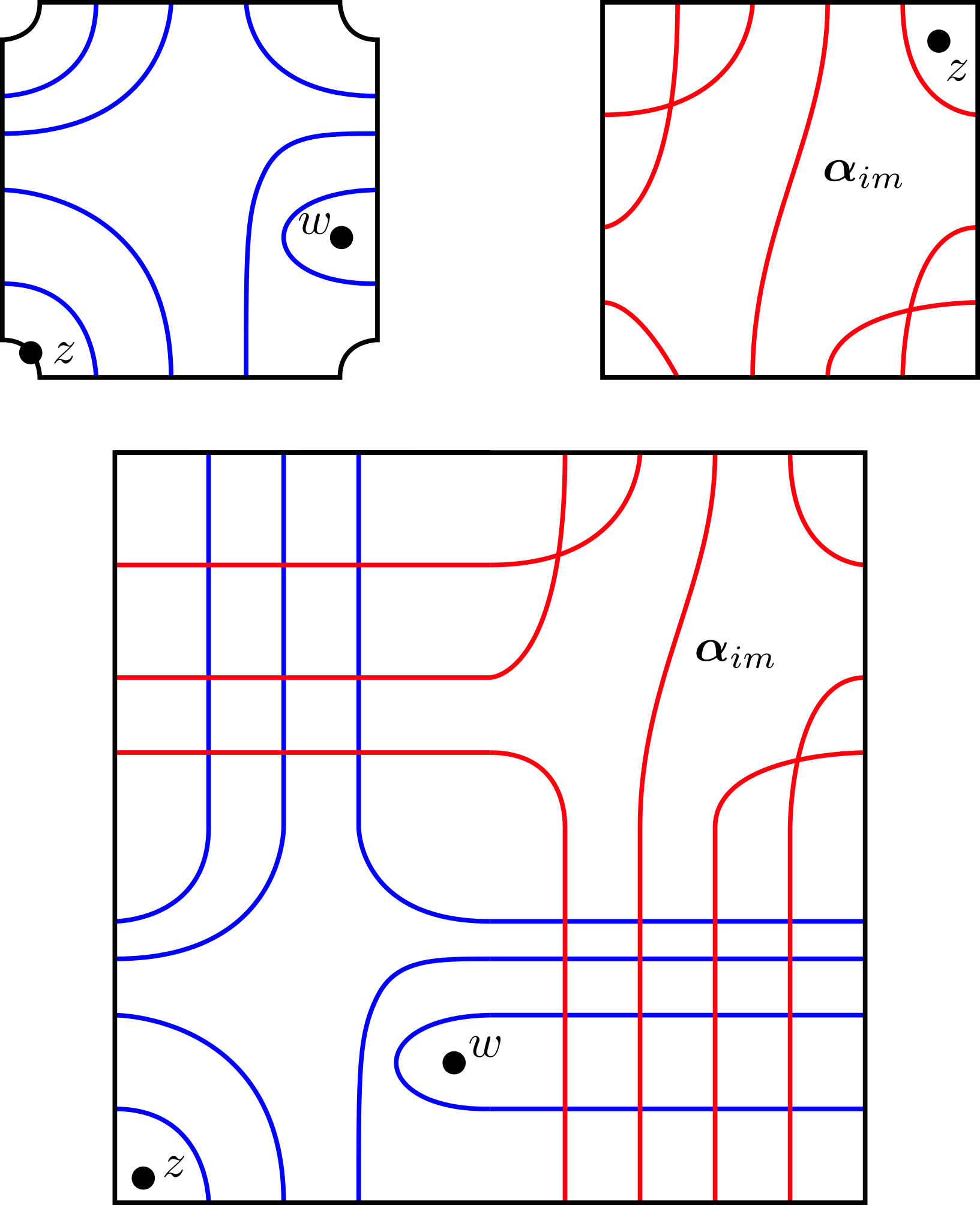}
    \caption{Top left: A bordered Heegaard diagram. Top right: An immersed curve in the marked torus. Bottom: The immersed Heegaard diagram obtained by gluing.}
    \label{fig:pairing}
\end{figure}

\begin{theorem}[\cite{chenhanselman}]\label{thm:pairing1}
    Let $\cH$ be a doubly-pointed bordered Heegaard diagram for a pattern knot $P\subset S^{1}\times \D^{2}$, and let $\alphas_{K}$ be the immersed multicurve associated to a companion knot $K$. Let $\cH(\alphas_{K})$ be the immersed doubly-pointed Heegaard diagram obtained by pairing $\cH$ with $\alphas_{K}$. Then the complex $\CFKR(\cH(\alphas_{K}))$ is bigraded chain homotopy equivalent to the knot Floer complex of the satellite knot $P(K)$ over $\cR$, where $\cR=\F[U,V]/(UV)$.
\end{theorem}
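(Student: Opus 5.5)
The plan follows the strategy of Chen and Hanselman's proof: reduce the immersed pairing to the bordered pairing theorem of Lipshitz--Ozsv\'ath--Thurston, then match the resulting bordered tensor product with the immersed Floer complex.

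\textbf{Step 1: bordered input.} By \cite{hanselman2023bordered}, the immersed multicurve $\alphas_{K}$ encodes the type D structure $\widehat{CFD}(S^3\setminus\nu(K))$. This structure can be extended to the $UV=0$ setting, as in Chen--Hanselman. Next, I would take the doubly-pointed bordered diagram $\cH$ and form its type A module $CFA_{\cR}(\cH)$ over $\cR$, where holomorphic curves crossing $w$ and $z$ are weighted by $U$ and $V$. The bordered pairing theorem then gives a bigraded homotopy equivalence
\[
CFA_{\cR}(\cH)\boxtimes \widehat{CFD}(S^3\setminus\nu(K))\simeq \CFKR(P(K)).
\]
This holds because gluing the solid torus containing $P$ to the knot complement, matching the meridian and the Seifert longitude, is exactly the satellite operation.

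\textbf{Step 2: reduce to the nice case.} Using the invariance of $\CFKR(\cH(\alphas_{K}))$ under isotopies of the $\alphas$ and $\betas$ curves, I would put $\alphas_{K}$ into a standard position near the boundary. This is a neighbourhood of the $\mu$ and $\lambda$ arcs, as in Figure \ref{fig:pairing}, where the curve consists of parallel strands crossing the arcs. In this position, intersection points $\x\in\mathbb{T}_\alpha\cap\mathbb{T}_\beta$ in $\cH(\alphas_{K})$ correspond bijectively to generators $\mathbf{a}\otimes \mathbf{d}$ of the box tensor product. Here $\mathbf{a}$ is a generator of $CFA$ whose $\alpha$-arc occupancy is complementary to the idempotent of the intersection point $\mathbf{d}$ of $\alphas_{K}$ with the $\mu$ or $\lambda$ arc.

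\textbf{Step 3: match differentials.} Index-one holomorphic disks in $\cH(\alphas_{K})$ split, by degeneration along the gluing circle, into two pieces. One is a curve in $\cH$ with Reeb chords on the boundary. The other is a sequence of bigons in the marked torus between $\alphas_{K}$ and the arcs, and these bigons are exactly the type D operations read off from the curve. Counting with the multiplicities $n_w,n_z$ gives the $U^{n_w}V^{n_z}$ weights. I would prove this by a neck-stretching argument, or combinatorially in the genus one case relevant to the paper (the Whitehead pattern), where disks are immersed polygons in the torus. I would also check the bigradings: both $\gru$ and $A$ are computed from domains, and domains add under gluing.

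The main obstacle is Step 3 in general. Immersed $\alpha$ curves can bound teardrops and multiply covered domains, which break $\del^2=0$ and the degeneration analysis. This is why admissibility and the absence of boundary-degenerate disks must be assumed. To handle it, I would first perturb $\alphas_{K}$ so that it has no immersed teardrops or annuli avoiding the basepoints; this is the paper's admissibility condition. I would then invoke the stretching argument only for the remaining embedded-type polygons.
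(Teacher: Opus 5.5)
The paper does not prove this theorem. It quotes it from \cite{chenhanselman}, and its only description of the argument (in the proof of Theorem \ref{thm:pairing3}) is that it goes through an arced bordered diagram for the two-boundary manifold $(S^{1}\times\D^{2})\setminus\nu(P)$.

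Your route uses a single type A module for $\cH$, and it has a genuine gap at Step 1. In a doubly-pointed bordered diagram, one basepoint lies on $\partial\overline{\Sigma}$: it is the one identified with the puncture $z$ of the marked torus, as in Theorem \ref{thm:pairing2}. The Reeb chords of the torus algebra are the arcs of that boundary circle that avoid it. So every curve counted by $CFA^{-}(\cH)$, and every operation of $\widehat{\mathit{CFD}}(S^{3}\setminus\nu(K))$, has multiplicity zero at $z$. The Lipshitz--Ozsv\'ath--Thurston pairing theorem therefore recovers only the complex in which $z$ is blocked. That gives the $U$-weighted differentials of $\CFKR(P(K))$ but none of the $V$-weighted (horizontal) ones.

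A type A module that weights curves through $z$ by $V$ cannot be defined by counting curves in $\cH$ over the ordinary torus algebra. After neck stretching, such curves have east asymptotics running through the basepoint, and $\widehat{\mathit{CFD}}$ has no coefficients for such chords. Your displayed equivalence is therefore not an instance of the bordered pairing theorem; it is essentially the theorem itself. The remark that the structure ``can be extended to the $UV=0$ setting, as in Chen--Hanselman'' defers exactly the hard part. Step 3 has the same defect, since its degeneration analysis only sees disks with $n_{z}=0$.

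A telling symptom is that your argument never uses $UV=0$. Yet $\alphas_{K}$ by itself only encodes $\CFKR(K)$. The disks through both basepoints, including teardrops through basepoints that your perturbation step leaves behind, are exactly where that relation has to enter.

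To close the gap you need one of two ideas.

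\emph{Extended algebra.} Enlarge the torus algebra by a chord through the basepoint. Extract from $\alphas_{K}$ an extended type D structure for the companion; the curve does see what happens at the $z$ corner. Build the matching extended type A module for $\cH$, and prove the neck-stretching pairing in that setting, where the extra terms come with a factor of $UV$.

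\emph{Pattern complement.} Take the route the paper attributes to \cite{chenhanselman}. Replace $P$ by a second boundary torus, giving an arced bordered diagram for $(S^{1}\times\D^{2})\setminus\nu(P)$. The outer torus is parametrized by $(\mu,\lambda)$ and the inner torus by a meridian and longitude of $P$.
\begin{itemize}
\item Gluing to the companion along the outer torus is then the ordinary hat-level bimodule pairing, which never needs curves through a basepoint.
\item Its output is $\widehat{\mathit{CFD}}(S^{3}\setminus\nu(P(K)))$, which records both the $U$-chains and the $V$-chains of $\CFKR(P(K))$ as Reeb chords on the inner torus, and so determines the $UV=0$ complex.
\item What remains is to identify this with $\CFKR(\cH(\alphas_{K}))$.
\end{itemize}
Your Step 2 generator correspondence is fine and would be used in either version.
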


Theorem \ref{thm:pairing1} is a generalization of a theorem in \cite{chen2019pattern}, presented below. In \cite{chenhanselman}, they remark that Theorem \ref{thm:pairing1} is particularly useful when the pattern knot is a \textit{(1,1) pattern}, which means that it has a genus 1 doubly-pointed bordered Heegaard diagram. This is because the resulting immersed Heegaard diagram is also genus 1, so it is easy to extract $\CFKR(\cH(\alphas_{im}))$ even when the curves self-intersect. In fact, the process is entirely combinatorial as we will show in Section \ref{sec:immersedexamples}. The earlier theorem, while less general, is still useful to present here as its proof more carefully specifies the homeomorphism of the pairing of the knot compliment and the pattern torus:

\begin{theorem}[\cite{chen2019pattern}]\label{thm:pairing2}
    Let $P\subset S^{1}\times \D^{2}$ be a $(1,1)$-pattern knot and $K$ in $S^{3}$ a companion. Let $\alphas_{K}\subset (\partial \sthree\setminus\nu(K))$ be the immersed curve for $K$, and let $\cH$ be a genus 1 bordered Heegaard diagram for $P$, thought of as curves and basepoints in $\partial S^{1}\times \D^{2}$. Let $h:\partial (\sthree\setminus\nu(K))\to \partial (S^{1}\times \D^{2})$ be an orientation preserving homeomorphism such that
    \begin{itemize}
        \item $h$ identifies the meridian and Seifert longitude of $K$ with $\mu$ and $\lambda$ respectively;
        \item $h$ maps the $z$ basepoint for $\alphas_{K}$ to the $z$ basepoint for $\cH$;
        \item there is a regular neighborhood $U\subset \partial (S^{1}\times \D^{2})$ of $z$ such that $U\cap(\lambda \cup \mu)=\emptyset$ and $U\cup h(\alphas_{K})=\emptyset$.
    \end{itemize}
    Suppose $\alphas_{K}$ is connected. Then there is a grading-preserving isomorphism of chain complexes $$\CFKhat(\cH(\alphas_{K}))\cong \CFKhat(S^{3}, P(K)).$$
\end{theorem}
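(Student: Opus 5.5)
This is the pairing theorem of Chen for $(1,1)$-patterns, so I would follow the strategy of \cite{chen2019pattern}: realize the satellite complement as a glued bordered manifold, use the bordered pairing theorem of Lipshitz--Ozsv\'ath--Thurston to express $\CFKhat(S^3,P(K))$ as a box tensor product, and then identify that box tensor product with Lagrangian Floer homology in the torus. The starting point is to write $S^3\setminus\nu(P(K))$ as $(\sthree\setminus\nu(K))\cup_h (S^1\times\D^2\setminus\nu(P))$. Because of the first bullet, $h$ identifies the Seifert framing with the standard framing of the solid torus, so the glued manifold, with $P$ inside, is exactly $(S^3,P(K))$. The pairing theorem of \cite{lipshitz2018bordered} then gives a homotopy equivalence
\[
\CFKhat(S^3,P(K))\simeq \widehat{CFA}(S^1\times\D^2,P)\boxtimes\widehat{CFD}(\sthree\setminus\nu(K)),
\]
where the type A module is computed from the genus 1 doubly-pointed bordered diagram $\cH$, with both basepoints used so that only the knot-Floer differential survives.

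Next I would replace $\widehat{CFD}(\sthree\setminus\nu(K))$ by its immersed curve $\alphas_K$, using \cite{hanselman2023bordered}: the type D structure corresponds to $\alphas_K$ in the marked torus once it is put in standard position relative to the arcs $\mu$, $\lambda$ and the basepoint. Since $\alphas_K$ is connected with trivial local system, $\widehat{CFD}$ is recovered directly from the intersections of $\alphas_K$ with $\mu\cup\lambda$ together with the bigons it cobounds with these arcs that avoid $z$. The third bullet keeps $h(\alphas_K)$ and $\lambda\cup\mu$ away from a neighborhood $U$ of $z$, and the second bullet places $z$ correctly, so after a homotopy supported away from $U$ we may assume $h(\alphas_K)$ lies in a thin neighborhood of $\mu\cup\lambda$ in $\partial(S^1\times\D^2)$.

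The central step, and the main obstacle, is a local isomorphism. After gluing the punctured torus $\Sbar$ of $\cH$ to the torus carrying $\alphas_K$, the pairing diagram $\cH(\alphas_K)$ is genus 1, with $\beta$ the pattern curve and the immersed $\alpha$ given by $h(\alphas_K)$ pushed into $\Sbar$. Generators of $\widehat{CFA}\boxtimes\widehat{CFD}$ are pairs $(x,y)$ with $x\in\beta\cap\alpha^a_i$ and $y$ a generator of $\widehat{CFD}$ of matching idempotent. These correspond bijectively to points of $\beta\cap h(\alphas_K)$, since each point where $\alphas_K$ crosses $\mu$ or $\lambda$ contributes one parallel strand near the corresponding arc, and that strand meets $\beta$ exactly where $\alpha^a_i$ does.

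For the differentials, I would match each term $m_{k+1}(x,\rho_{I_1},\dots,\rho_{I_k})\otimes y_k$ in the box tensor product with an immersed bigon in $\Sigma_1$ with boundary on $\beta\cup h(\alphas_K)$ and avoiding $w,z$. Such a bigon splits into a provincial domain in $\cH$, carrying Reeb chords $\rho_I$ at the boundary, and a collection of bigons between $\alphas_K$ and the arcs near the boundary, which encode the coefficients of $\delta^1$. In genus one, holomorphic disk counts reduce to counting immersed bigons by the Riemann mapping theorem. The work is to check that this correspondence is a bijection that respects the basepoint conditions; connectedness of $\alphas_K$ and the fact that $U$ avoids all curves are what make the basepoint multiplicities agree. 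Admissibility, which guarantees finiteness, is inherited from the pattern diagram.

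Finally, I would check the gradings. Alexander gradings are relative differences $n_z-n_w$ of domains, and these are preserved under the domain correspondence. Maslov gradings are compared with the same domain splitting using the bordered grading conventions. The absolute gradings are then fixed by symmetry of the Alexander grading and by the fact that total homology is $\HFhat(S^3)\cong\F$. This gives the grading-preserving isomorphism $\CFKhat(\cH(\alphas_K))\cong\CFKhat(S^3,P(K))$.
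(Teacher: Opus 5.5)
The paper gives no proof of this statement (it is quoted from \cite{chen2019pattern}). Your outline is essentially the argument of that source: LOT pairing, replacing $\widehat{CFD}(\sthree\setminus\nu(K))$ by the Hanselman--Rasmussen--Watson curve, pushing $h(\alphas_K)$ into a thin neighborhood of $\mu\cup\lambda$, and matching generators and genus-one immersed bigons with the terms of the box tensor product, with the type A counts made combinatorial by the Riemann mapping theorem. Two small corrections: the absolute Maslov normalization cannot come from the homology of $\CFKhat$ (which is $\HFKhat(P(K))$, not $\HFhat(\sthree)$), so either run the same domain correspondence for the complex allowing disks through one basepoint or match only relative gradings; and your step ``after a homotopy we may assume'' relies on homotopy invariance of the immersed Floer complex, so for a general position of $h(\alphas_K)$ satisfying the bullets you obtain a homotopy equivalence, with the literal isomorphism to the box-tensor model holding only in the special position.
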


The goal is to adapt this theorem to manifolds other than $\sthree$, and extend the chain homotopy equivalence to the $UV=0$ complex $\CFKR$ rather than simply $\CFKhat$.

\begin{theorem}\label{thm:pairing3}
    Let $\cH$ be a doubly-pointed bordered Heegaard diagram for a pattern knot $P\subset S^{1}\times \D^{2}$, and let $\alphas_{K}$ be the immersed multicurve associated to a companion knot $K$. Let $\cH(\alphas_{K})$ be the immersed doubly-pointed Heegaard diagram obtained by pairing $\cH$ with $\alphas_{K}$ using a framing change in accordance with $1/n$ surgery on $K$. Then the knot Floer complex $\CFKR(\cH(\alphas_{K}))$ is bigraded chain homotopy equivalent to the knot Floer complex of the satellite knot $P(\muk)$ in $\dW$ over $\cR=\F[U,V]/(UV)$.
\end{theorem}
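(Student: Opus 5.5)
The plan is to reduce Theorem \ref{thm:pairing3} to Theorem \ref{thm:pairing1} by reinterpreting the knot complement. The key topological observation is that the complement of the dual knot $\muk$ in $\dW$ is homeomorphic to the complement $\sthree\setminus\nu(K)$. The only difference between the two is the parametrization of the common boundary torus. For $\muk$, the meridian is the surgery slope $\mu+n\lambda$, since the meridian of the surgery solid torus is glued to $p\mu+q\lambda$ with $(p,q)=(1,n)$. For the longitude we may take $\lambda$, the Seifert longitude of $K$. Indeed, $\lambda$ meets $\mu+n\lambda$ once, and it bounds the Seifert surface of $K$ in the complement, so it is null-homologous in $\dW\setminus\nu(\muk)$. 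Thus $\muk$ is null-homologous and $\lambda$ is its preferred longitude.

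First I would make this explicit. The satellite $P(\muk)\subset\dW$ is obtained by gluing $S^{1}\times\D^{2}\supset P$ to $\sthree\setminus\nu(K)$ by a homeomorphism $h'$. This $h'$ sends the meridian of the pattern torus to $\mu+n\lambda$ and its longitude to $\lambda$. In other words, $h'=h\circ \tau^{n}$, where $\tau$ is the appropriate Dehn twist of the boundary torus and $h$ is the standard satellite gluing of Theorem \ref{thm:pairing2}.

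Second, I would invoke the bordered machinery. By \cite{hanselman2023bordered}, the immersed multicurve of a bordered manifold is natural under reparametrization of the boundary. That is, the invariant of $(\sthree\setminus\nu(K), h')$ is the image of $\alphas_{K}$ under the induced linear map of $\R^{2}/\Z^{2}$, up to homotopy in the marked torus. This is precisely the ``framing change in accordance with $1/n$ surgery'' in the statement. So the immersed curve for the bordered complement of $\muk$, in its own meridian and longitude, is this twisted curve $\alphas_{K}'$.

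Third, I would apply Theorem \ref{thm:pairing1}, or rather its proof in \cite{chenhanselman}, to the companion $\muk\subset\dW$. That proof only uses the following ingredients. First, the bordered type D structure of the companion complement in the $UV=0$ (or collapsed) setting, together with its immersed-curve representative. Second, the doubly-pointed bordered diagram of the pattern. Third, a box-tensor or Lagrangian-intersection pairing argument. None of these uses that the ambient closed manifold is $\sthree$, only that the companion is null-homologous so that the Alexander grading is defined.

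I expect the main obstacle to be this step. Specifically, I must check two things. One is that the correspondence between $\CFKR(Y,K)$ and the immersed curve holds for the dual knot: for a knot in a homology sphere, the bordered invariant of the complement still determines, and is determined by, $\CFKm$ over $\F[U]$ via \cite{lipshitz2018bordered}, with the Alexander grading fixed using $\lambda$. The other is that the pairing in \cite{chenhanselman} is a statement about bordered manifolds rather than $\sthree$. I would handle the first by the general LOT $\CFKm$-from-$\widehat{CFD}$ theorem for null-homologous knots in rational homology spheres.

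Finally, I would verify the gradings. The absolute Alexander grading on the paired complex agrees with that of $P(\muk)$, because both are symmetrized: $\dW$ is an integral homology sphere and $\muk$ is null-homologous. The Maslov gradings $\gru,\grv$ are pinned down by the fact that $\HFhat(\dW)$ has the same total rank and $d$-invariant normalization. Here I would shift by a constant if needed, and check that the shift is zero using $\CFKR$ of the unknot pattern, whose satellite is $\muk$ itself.
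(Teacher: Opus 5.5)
Your proposal is correct and follows essentially the same route as the paper: both change the gluing between the pattern solid torus and $\sthree\setminus\nu(K)$ by the Dehn twist encoding the $1/n$-surgery framing (meridian $\mu+n\lambda$, longitude $\pm\lambda$), and then run the proof of Theorem~\ref{thm:pairing1} from \cite{chenhanselman} unchanged. The only difference is cosmetic: you twist the companion's curve using naturality of immersed curves under reparametrization, while the paper twists the pattern diagram $\cH$ to slope $-1/n$ (and notes that either choice works); you also spell out the identification of the dual-knot exterior and the grading normalization, which the paper leaves implicit.
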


\begin{proof}
    The proof of Theorem \ref{thm:pairing1} uses an \textit{arced bordered Heegaard diagram}, which is a version of a bordered Heegaard diagram for a manifold with two boundary components. The manifold in question is $S^{1}\times\D^{2}\setminus\nu(P)$, the complement of the pattern knot in the solid torus. In the proof of Theorem \ref{thm:pairing1}, the parametrization of the outer boundary is the usual meridian-longitude parametrization and the inner boundary is parametrized by the meridian of $P$ and a longitude of $P$. Here, we parametrize the outer boundary instead with a framing change given by the $1/n$ surgery performed along $K$. That is, a homeomorphism $\Phi_{n}:\del_{\mathrm{outer}} \big((S^{1}\times\D^{2})\setminus\nu(P)\big)\to \del (\sthree\setminus\nu(K))$, given by how it acts on the homology, $$\Phi_{n}^{*}=\begin{bmatrix}
        1 & 0\\
        n & -1
    \end{bmatrix}.$$ Now, when pairing $\cH$ with $\alphas_{K}$, we simply take this map into account by adding Dehn twists in the torus for $\cH$ to skew the diagram to slope $-1/n$ (we can skew either diagram using Dehn twists, but we prefer to look at covering spaces which maintain the basis already in place for $\alphas_{K}$ rather than $\cH=(\Sbar, \alphas, \betas, w, z)$, so the linear map on $H_{1}(\Sbar)=<\mu, \lambda>$ is the inverse of $\Phi_{n}^{*}$). The remainder of the proof is unchanged from that of Theorem \ref{thm:pairing1}.
\end{proof}

\subsection{Computing $\tau_{\alpha}$ from Immersed Curves}
\cite{chen2019pattern} introduces a nice way to read the Alexander gradings in an immersed Heegaard diagram of genus 1 and determine which generator survives to the infinity page. This allows for easy computation of $\tau(K)$. The calculus introduces to the diagram \textit{A-buoys} attached to the $\betas$ curves. These A-buoys are small arrows which record the change in Alexander filtration level between two generators as we isotope away disks only crossing the $z$ basepoint. Isotoping disks away is akin to canceling components of the differential on the filtered complex. The difference in Alexander filtration level between two intersection points indicates the lengths of differentials that we cancel, which corresponds to ``turning the page'' on the spectral sequence converging to the Heegaard Floer homology of the underlying 3-manifold, as described in Section \ref{chap:HF}.

So, in practice, we perform isotopies to cancel differentials of filtration length one until we no longer can, recording the filtration change using A-buoys. We can then iterate this process for length two, or three, to see further pages in the spectral sequence. For a knot in $\sthree$, there will eventually be only one remaining intersection point, whose Alexander grading is $\tau(K)$. In general, for a knot in a 3-manifold other than $\sthree$, such as a 3-manifold obtained by surgery, more than one intersection point may remain, since the 3-manifold might have $\mathrm{rk}\HFhat(Y)>1$. Using the Alexander gradings of these remaining generators we can compute $\tau_{\alpha}$.

If there exists a pair of remaining generators, after the above process, with different Alexander grading, then at least one of them is nonzero. Corollary \ref{cor:taualpha} implies that obstructs shallow sliceness of the corresponding knot.

\subsection{Examples}\label{sec:immersedexamples}
Now that the theory is introduced, we move to illuminating examples. We return our attention to the $(1,1)$ pattern knot of focus, the Whitehead double, $D_{+}$, depicted in Figure \ref{fig:satelliteop}. A doubly-pointed bordered Heegaard diagram for $D_{+}$, denoted simply by $\cH$ in this section, can be seen in Figure \ref{fig:whiteheaddiagram}. 

\subsubsection{Example 1: $D_{+}(T_{2,3})$}\label{sec:Trefoil}

We begin with the right handed trefoil, whose knot Floer complex is relatively simple. Its knot Floer complex over $\cR$ is in Figure \ref{fig:immersedexample}. Following the procedure in Proposition \ref{prop:makethecurve}, we obtain the immersed curve for the trefoil shown also in Figure \ref{fig:immersedexample}.

\begin{figure}
    \centering
    \subfigure{
    \begin{tikzpicture}
	\draw[step=1, black!30!white, very thin] (-1.9, -2.9) grid (0.9, 1.9);
	\filldraw (-1, 0) circle (2pt) node[] (a1){};
	\filldraw (0, 0) circle (2pt) node[] (b1){};
	\filldraw (0, -1) circle (2pt) node[] (c1){};
	\draw [very thick, <-] (a1) -- (b1);
	\draw [very thick, <-] (c1) -- (b1);
	\node [left] at (a1) {$Ua$};
	\node [above] at (b1) {$b$};
	\node [below] at (c1) {$Vc$};
    \end{tikzpicture}
    }
    \hspace{1.2in}
    \subfigure{
    \includegraphics{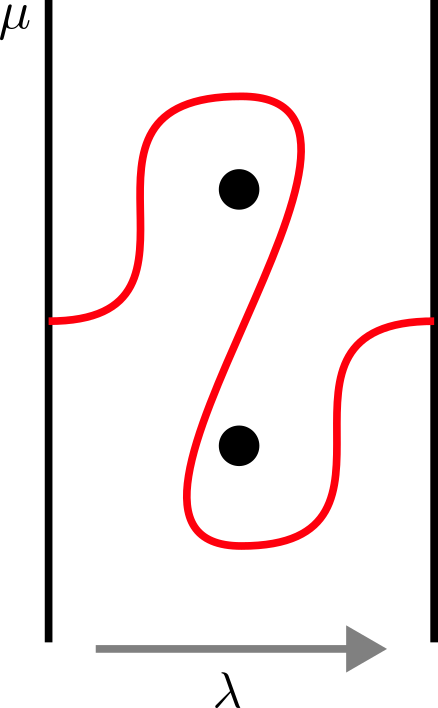}
    }
    \caption{Left: A shorthand representation of the complex $\CFKR(T_{2,3})$, the right handed trefoil. Right: The immersed curve arising from the complex on the left.}
    \label{fig:trefoilimmersedcurve}
\end{figure}

We establish $\tau(D_{+}(T_{2,3}))$, by passing to an immersed Heegaard diagram by pairing $\alphas_{T_{2,3}}$ and $\cH$, as in Figure \ref{fig:WhiteheadTrefoil}. To determine $\tau(D_{+}(T_{2,3}))$ from this immersed Heegaard diagram, we employ two steps. First, the Alexander grading of each intersection point in the diagram are determined by looking at their relative Alexander grading given by Whitney disks between them. Second, we symmetrize the gradings so that the top-most and bottom-most are only opposite in sign. This establishes the absolute Alexander grading of each generator of $\CFKR(D_{+}(T_{2,3}))$. The gradings are as follows: \begin{align*}
    \x && A(\x)\\
    4,7,11,14 && 1\\
    1,3,6,8,10,13,15 && 0\\
    2,5,9,12 && -1
\end{align*} Then $\tau(D_{+}(T_{2,3}))$ is found by following the A-buoy calculus in \cite{chen2019pattern}, canceling differentials of length one, then length two, and so on, until a single generator remains, corresponding to the single generator of $\HFhat(\sthree)$ whose Alexander grading is $\tau(D_{+}(T_{2,3}))$. This manipulation of the immersed Heegaard diagram is shown in Figure \ref{fig:D(T23)Abuoys}. We see that the only generator surviving after pulling $\betas$ straight is $x_{7}$, which has $A(x_{7})=1.$ Thus, $\tau(D_{+}(T_{2,3}))=1$. Consequently, $D_{+}(T_{2,3})$ is not slice.

\begin{figure}
    \centering
    \includegraphics{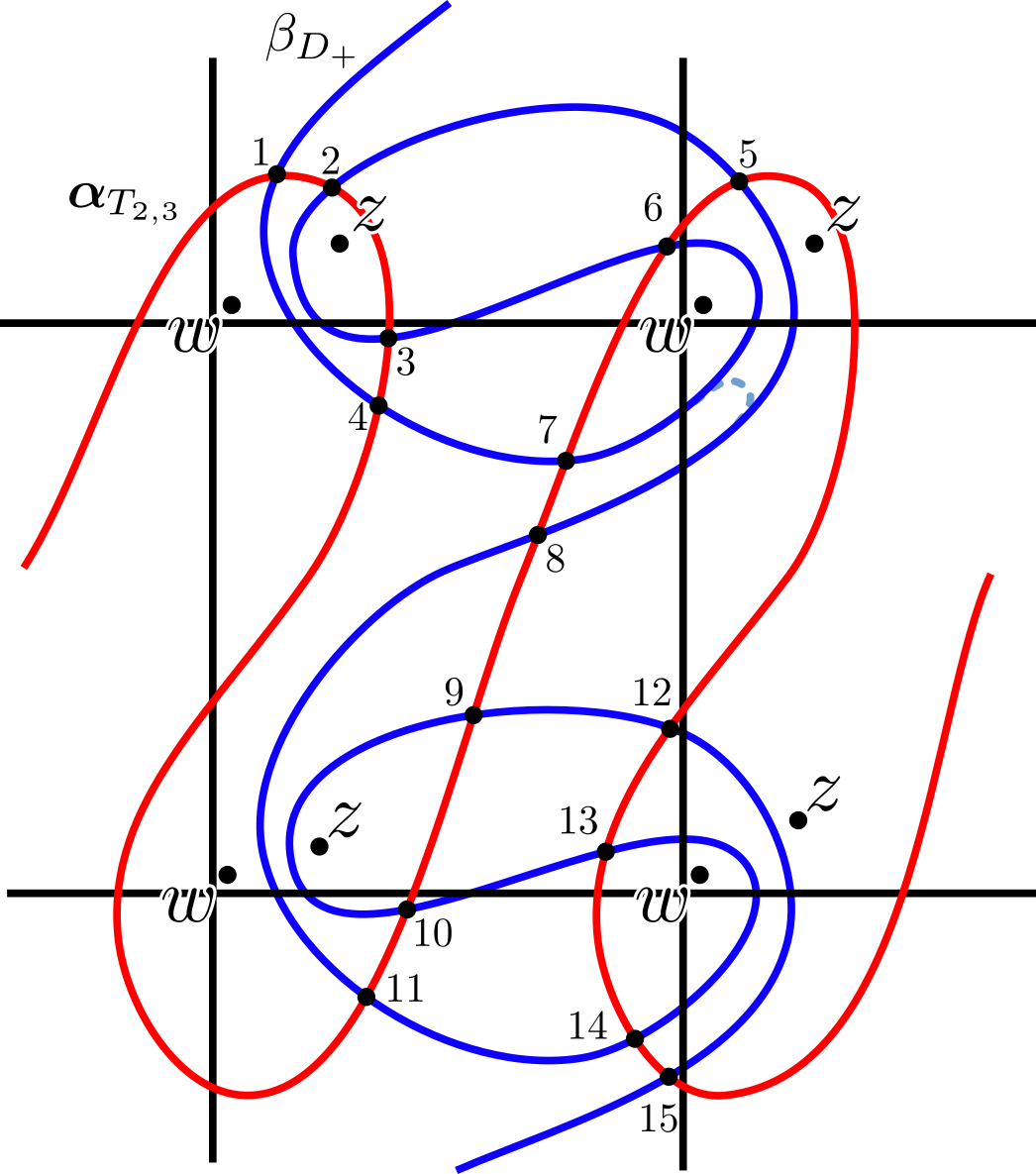}
    \caption{A lift of an immersed Heegaard diagram for the Whitehead double of the right handed trefoil in $\sthree$.}
    \label{fig:WhiteheadTrefoil}
\end{figure}

\begin{figure}
    \centering
    \includegraphics[width=\linewidth]{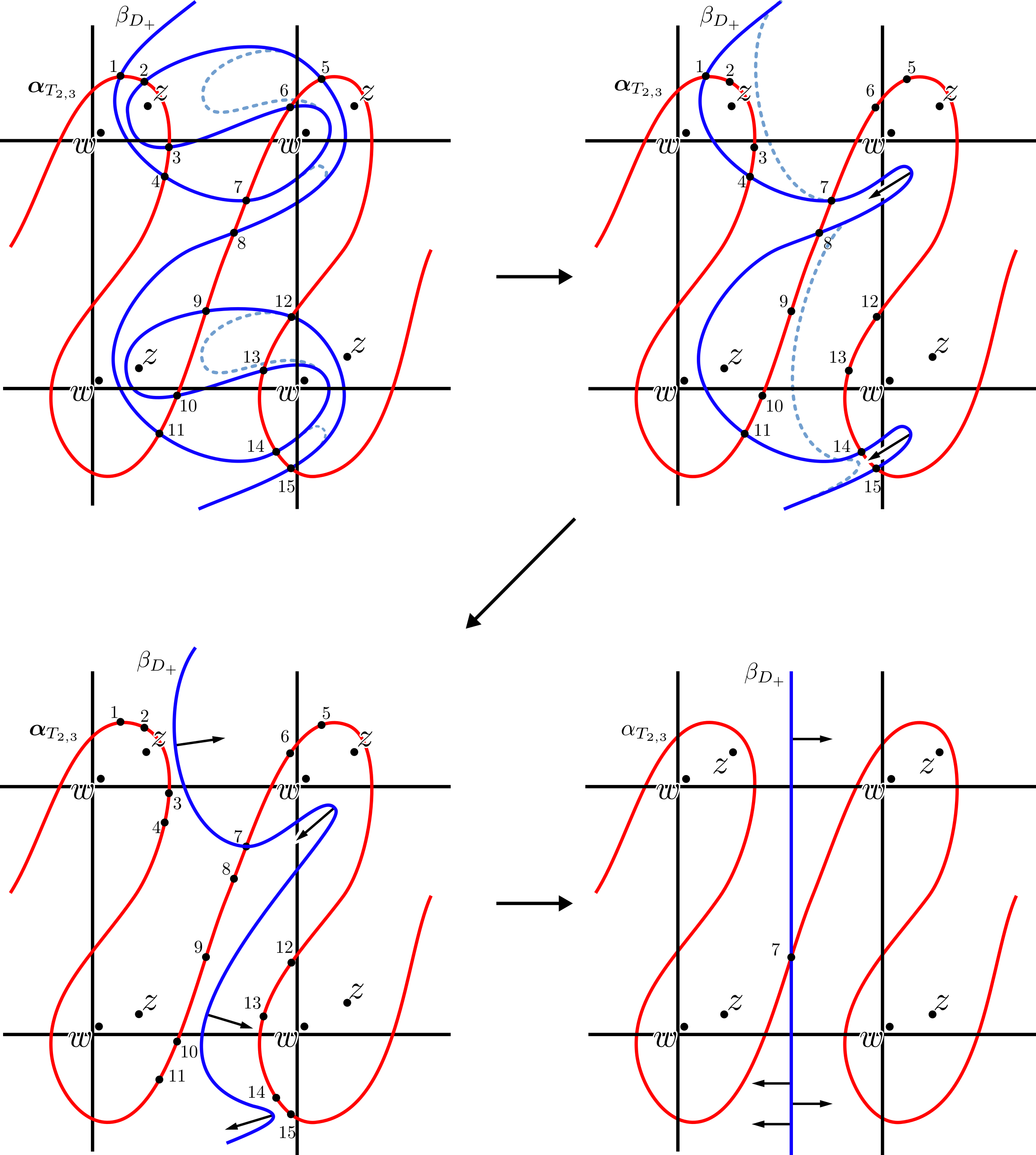}
    \caption{Cancellation of length one differentials. $\beta_{d_{+}}$ is essentially pulled tight across the $z$ basepoints.}
    \label{fig:D(T23)Abuoys}
\end{figure}

\subsubsection{Example 2: $D_{+}\big((4_{1})^{*}_{1/2}\big)$}\label{sec:figureeight}

We now turn our attention to an example of the Whitehead double of the dual knot to $1/n$ surgery for the figure eight knot, $4_{1},$ and $n=2$. Since the surgered 3-manifold is no longer $\sthree$, the expectation is that there will be more than one generator $\alpha\in\HFhat(S^{3}_{1/2}(4_{1}))$ with which to compute $\tau_{\alpha}$. The immersed curve $\alphas_{4_{1}}$ is shown in Figure \ref{fig:figureeightimmersedcurve}. Combining $\alpha_{4_{1}}$ with $\cH$ as in Figure \ref{fig:whiteheaddiagram} using the map $\Phi_{2}^{*}$ as in the proof of Theorem \ref{thm:pairing3}, we arrive at the immersed Heegaard diagram in Figure \ref{fig:final}. In Figure \ref{fig:final}, a convenient lift to a suitable covering space was chosen to remove as many immersed points as possible. This way, it is significantly easier to execute the combinatorics of counting disks and to see the generators. From Figure \ref{fig:final}, we can compute the complex $\CFKR(S^{3}_{1/2}(4_{1}), (4_{1})^{*}_{1/2})$ directly, or simply apply the A-buoy calculus to see the Alexander gradings of the surviving generators after pulling the $\betas$ curve straight, allowing isotopy over $z$ basepoints. In this example, only length one differentials need to be canceled before arriving at Figure \ref{fig:D(41)pulledtight}, where no more isotopies can be made. It is clear that only generators $7,8,9,10,$ and $11$ survive. From the A-buoys and the relative Alexander grading formula, we see that $A(x_{10})\neq A(x_{11})$, so $\tau_{x_{10}}\big(D_{+}((4_{1})^{*}_{1/2})\big)\neq \tau_{x_{11}}\big(D_{+}((4_{1})^{*}_{1/2})\big)$, and, in particular, one of them is nonzero. By Corollary \ref{cor:taualpha}, $D_{+}\big((4_{1})^{*}_{1/2}\big)$ cannot be slice in $S^{3}_{1/2}(4_{1})\times I$. Remarkably, if $4_{1}$ were slice (it is not), then we could conclude that $D_{+}\big((4_{1})^{*}_{1/2}\big)$ is deeply slice in the 4-manifold described in Section \ref{section:topology}, by applying Corollary \ref{cor:dualslice}.

\begin{figure}
    \centering
    \subfigure{
    \begin{tikzpicture}
	\draw[step=1, black!30!white, very thin] (-1.9, -2.9) grid (0.9, 1.9);
	\filldraw (-1, 0) circle (2pt) node[] (a1){};
	\filldraw (0, 0) circle (2pt) node[] (b1){};
	\filldraw (0, -1) circle (2pt) node[] (c1){};
    \filldraw (-1,-1) circle (2pt) node[] (d1){};
    \filldraw (0.2, 0.2) circle (2pt) node[] (e1){}; 
	\draw [very thick, <-] (a1) -- (b1);
	\draw [very thick, <-] (c1) -- (b1);
    \draw [very thick, <-] (d1) -- (c1);
    \draw [very thick, <-] (d1) -- (a1);
	\node [above left] at (a1) {$Ua$};
	\node [below left] at (b1) {$b$};
	\node [below right] at (c1) {$Vc$};
    \node [below left] at (d1) {$UVd$};
    \node [above right] at (e1) {$e$};
    \end{tikzpicture}
    }
    \hspace{1.2in}
    \subfigure{
    \includegraphics{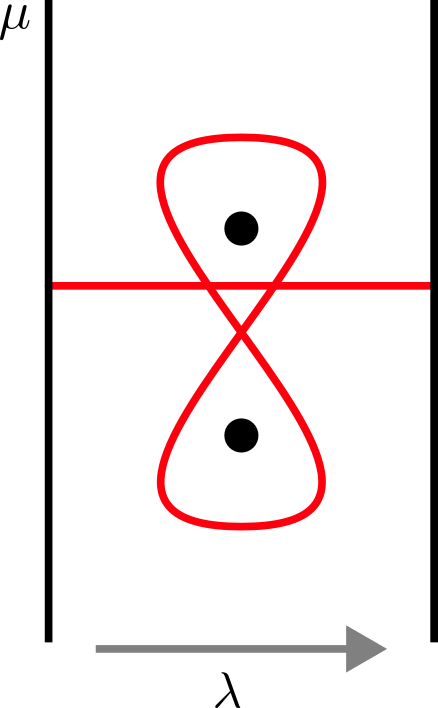}
    }
    \caption{Left: A shorthand representation of the complex $\CFKR(4_{1})$, the figure eight knot. Right: The immersed curve arising from the complex on the left.}
    \label{fig:figureeightimmersedcurve}
\end{figure}

\begin{figure}
    \centering
    \includegraphics{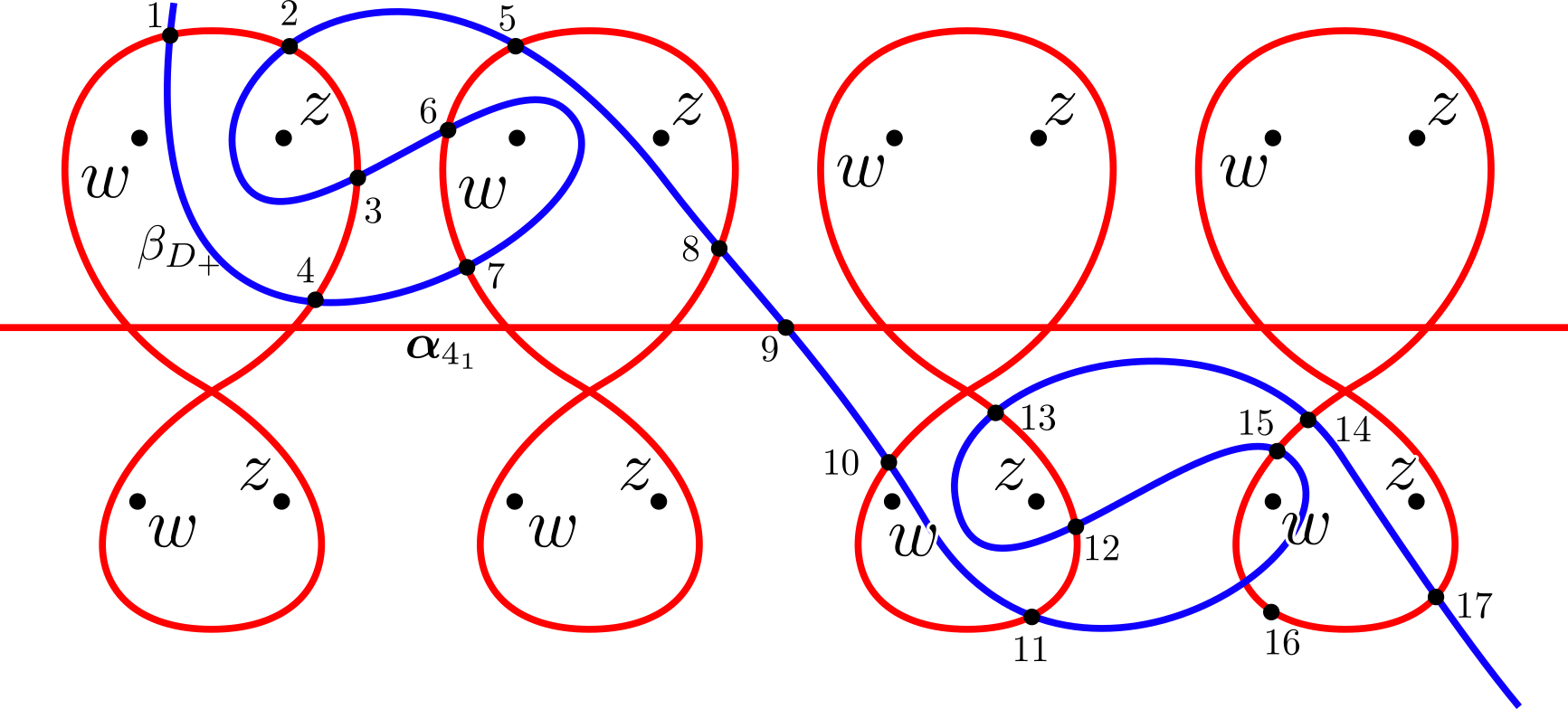}
    \caption{A lift of the immersed Heegaard diagram for the pairing of the Whitehead double pattern with the dual knot to $1/2$ surgery along the figure eight knot, $4_{1}$. Notice the change of framing for the Whitehead pattern corresponding to the $1/2$ surgery on $4_{1}$, giving the $\betas$ curve a $-1/2$ slope.}
    \label{fig:final}
\end{figure}

\begin{figure}
    \centering
    \includegraphics{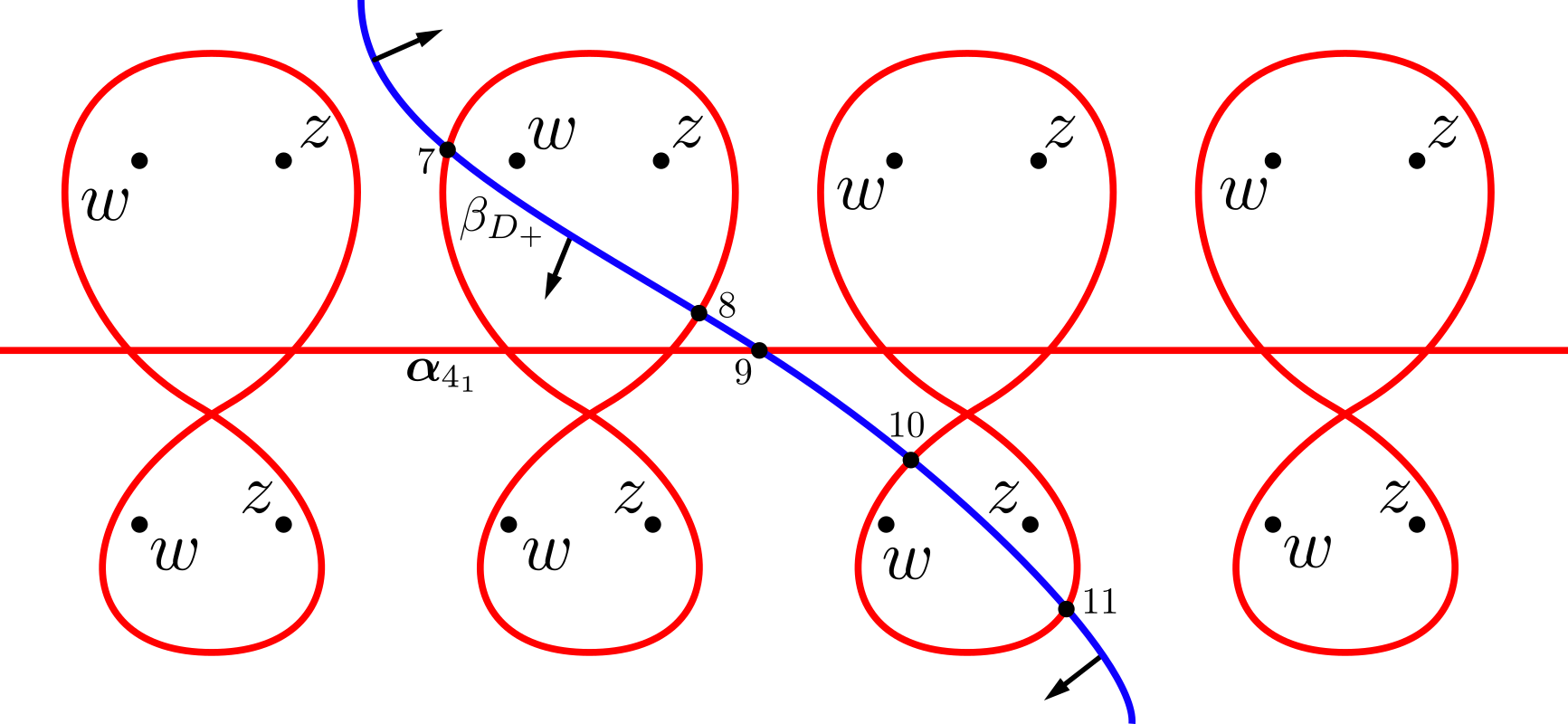}
    \caption{The immersed Heegaard diagram arising from canceling all the length one differentials.}
    \label{fig:D(41)pulledtight}
\end{figure}

\section{Proof of the Main Theorem}\label{sec:maintheorem}

In this section we prove the following theorem.

\begin{theorem}\label{thm:deepslice}
    If a non-trivial knot $K$ is slice in the 4-ball and has an acyclic summand in its knot Floer complex over $\F[U,V]/(UV)$ admitting a simplified basis, then the Whitehead double of the dual knot to $1/n$ surgery along $K$, $\Dmu$, is deeply slice in $W$ (as in Proposition \ref{prop:W}), for $n\in\Z_{>0}$.
\end{theorem}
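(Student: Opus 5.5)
The proof has two halves: sliceness in $W$ and non-sliceness in the collar. The first half is already in hand. Since $K$ is slice, Proposition \ref{prop:W} gives the smooth contractible $W$ with $\partial W=\dW$. Corollary \ref{cor:dualslice} then shows that $\Dmu$ is slice in $W$. The knot is also null-homotopic in $\dW$: the Whitehead pattern has winding number zero and is null-homotopic in the solid torus, so its image is null-homotopic in the ambient manifold. By Corollary \ref{cor:taualpha}, it therefore suffices to find classes $\alpha,\alpha'\in\HFhat(\dW)$ with $\tau_{\alpha}(\dW,\Dmu)\neq\tau_{\alpha'}(\dW,\Dmu)$. At least one of these values is then nonzero, which rules out a slice disk in $\dW\times I$.

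To compute these invariants I will use Theorem \ref{thm:pairing3}. I pair the genus one doubly pointed bordered diagram $\cH$ for $D_+$ (Figure \ref{fig:whiteheaddiagram}) with the immersed curve $\alphas_K$, skewing $\betas$ to slope $-1/n$. This identifies $\CFKR(\cH(\alphas_K))$ with the complex of $\Dmu$ over $\cR$. The $\tau_\alpha$ values are determined by the filtered complex, and they can be read off from the $V=0$ (hat) specialisation. I will then run the A-buoy calculus of \cite{chen2019pattern}: isotope $\betas$ across $z$ basepoints to cancel differentials of increasing filtration length, and record the Alexander gradings of the surviving generators. The survivors represent a basis of $\HFhat(\dW)$.

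Next I decompose the curve. Because $K$ is slice, $\tau(K)=\epsilon(K)=0$, so the essential component of $\alphas_K$ is the horizontal line at height $0$. Any other summand of $\CFKR(K)$ is acyclic, and by hypothesis one such summand, with a simplified basis, exists. Proposition \ref{prop:makethecurve} turns an acyclic summand into closed components of $\alphas_K$ that are homologically trivial in the torus and encircle pegs, like the figure-eight component in Figure \ref{fig:figureeightimmersedcurve}. I will treat the essential component and one closed component $\gamma$ separately.

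First, the horizontal component paired with the slope $-1/n$ $\betas$ curve behaves like the unknot companion, giving $n$-fold intersection patterns. I will show that the A-buoys cancel everything except survivors at a common Alexander grading, say $0$.

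Second, $\gamma$ contributes survivors, and at least one of them is displaced vertically by the Whitehead clasp region. Concretely, I will pick an extremal peg enclosed by $\gamma$, for instance the top of its staircase, where simplicity of the basis guarantees a corner configuration like $Ua\leftarrow b\to Vc$. Near that corner, the $\betas$ arc of the clasp must cross $\gamma$ on both sides of a $z$ basepoint. This forces two survivors whose relative Alexander grading, computed from a bigon through exactly one $z$, differs by one. This mirrors the $x_{10},x_{11}$ phenomenon in Example \ref{sec:figureeight}, with the relevant local picture depending only on the corner.

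The main obstacle is this local analysis. I must show that near the chosen corner the pulled-tight $\betas$ cannot cancel the two relevant generators against each other or against others, uniformly in $n$ and for an arbitrary acyclic summand. My first attempt will be to isolate the region of the infinite strip lift containing a single twisted copy of the clasp and the top peg of $\gamma$, and to argue that no Whitney disk avoiding $w$ connects the two survivors there. Cancellations elsewhere only involve other lifts, so they do not affect this pair. This gives two surviving classes in different Alexander gradings, hence distinct $\tau_\alpha$ values, which completes the proof.
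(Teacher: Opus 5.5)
Your outline matches the paper's up to the decisive step. Sliceness in $W$ comes from Corollary \ref{cor:dualslice}. Then Corollary \ref{cor:taualpha} is applied to the survivors of the A-buoy calculus in the pairing diagram of Theorem \ref{thm:pairing3}, localized at an extremal peg of a closed component coming from the acyclic summand. But that step, which is the whole content of the proof, is only announced, and the mechanism you propose for it cannot work. Two generators joined by a bigon containing exactly one $z$ and no $w$ are joined by a length-one differential of the filtered complex $\CFhat(\dW)$. This is exactly the kind of bigon that the isotopy of $\betas$ across $z$ removes, together with both of its corners, so those two generators cannot both survive. What you need is the opposite situation, which is how the paper argues for $n>1$. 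There, the two survivors are separated by a domain that crosses a $w$ basepoint but no $z$ basepoint. Such a domain is not a differential in $\CFhat(\dW)$ and cannot be isotoped away, while $A(x)-A(y)=n_z-n_w\neq 0$.

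Your claim that the local picture depends only on the chosen corner, uniformly in $n$, is also unjustified, and it fails for a natural choice of corner. The $\betas$ curve has slope $-1/n$, so which intersection points appear near the peg changes with $n$. At the lowest-Alexander-grading generator of the acyclic summand, the paper's two candidate survivors have distinct gradings when $n>1$ but equal gradings when $n=1$. For $n=1$ the paper must therefore move to the highest-grading generator. It performs the one available cancellation across a $z$ basepoint there, and checks that the two remaining generators differ by an odd amount. Your plan has no such case split.

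Relatedly, the corner $Ua\leftarrow b\to Vc$ you invoke, with two outgoing arrows, is a staircase corner, not an extremal generator of an acyclic summand. By Lemma \ref{lem:acyclicbox} and symmetry, the lowest generator has one incoming vertical and one outgoing horizontal arrow. The highest generator's local curve is the $180^\circ$ rotation of that picture. So the local model you propose to analyze is not the one that actually occurs.
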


Note that \cite{klug2021deep} provides a related theorem about the existence of deeply slice knots.

\begin{theorem}\cite[Theorem 3.1]{klug2021deep}\label{theorem:integercase}
Every 2-handlebody $X$ contains a null-homotopic deeply slice knot in its boundary.
\end{theorem}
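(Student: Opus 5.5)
The plan is to produce the knot from the handle structure of $X$, in the same spirit as Section \ref{section:topology}. I will assume $X$ has at least one $2$-handle. If it has none, $\del X=\sthree$ and the $3$-ball $\del B^{4}$, which contains no $2$-handle, cannot host a deeply slice knot: a slice disk in the $4$-ball can be isotoped off a small ball, hence into a collar. Fix a $2$-handle $h$ attached along a framed knot $L$, and let $\mu\subset Y:=\del X$ be its belt circle, i.e.\ the boundary of the cocore of $h$. As observed before Theorem \ref{thm:contractible}, $\mu$ is slice in $X$ via the cocore. However, $\mu$ need not be null-homotopic in $Y$. I therefore pass to a satellite $P(\mu)$ whose pattern $P\subset S^{1}\times\D^{2}$ has winding number zero and is null-homotopic in the solid torus. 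My first choice is the Whitehead pattern $D_{+}$, with its clasp framing taken relative to the normal framing of the cocore. Then $D_{+}(\mu)$ is null-homotopic in $Y$, since it is already null-homotopic in a tubular neighbourhood of $\mu$.

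The argument of Corollary \ref{cor:dualslice} applies verbatim. Doubling the cocore disk, using the framing it induces on $\mu$, gives an annulus from $D_{+}(\mu)$ to $D_{+}(\text{unknot})=$ unknot. Capping this with a disk shows that $D_{+}(\mu)$ is slice in $X$. This settles everything except the \emph{shallow} half.

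For that half, I will obstruct sliceness of $D_{+}(\mu)$ in $Y\times I$ via Corollary \ref{cor:taualpha}. It suffices to find two classes $\alpha,\alpha'\in\HFhat(Y)$ with $\tau_{\alpha}(Y,D_{+}(\mu))\neq\tau_{\alpha'}(Y,D_{+}(\mu))$. In particular, I will first check that $\HFhat(Y)$ has rank greater than one, or else pass to a suitable twist. The complex of $\mu$ is accessible because $\mu$ is the dual knot to surgery on $L$ in the boundary of the handlebody with $h$ removed. A dual-knot surgery formula (as in \cite{hedden2019dual}, or its immersed-curve version) would give its immersed curve. The pairing argument of Theorem \ref{thm:pairing3} would then identify $\CFKR(Y,D_{+}(\mu))$, with the framing change now given by the framing of $h$ rather than by $1/n$. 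The A-buoy calculus would then locate surviving generators with distinct Alexander gradings, exactly as in the $(4_{1})^{*}_{1/2}$ example.

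The main obstacle is this last step, because $X$ is an \emph{arbitrary} $2$-handlebody, so the Floer homology of $Y$ and of $\mu$ is not under control. If a direct computation is out of reach, I would instead exploit the freedom in choosing the knot. I would replace $D_{+}$ by a winding-number-zero pattern still admitting the doubled-cocore slice disk, for instance iterated Whitehead doubles, or $D_{+}$ composed with a local knot $J\#\overline{J}$. I would then aim for a general inequality forcing $\tau_{\alpha}\neq 0$ for some $\alpha$, such as a satellite lower bound of the form $\tau_{\alpha}(Y,D_{+}(\mu))\geq 1$ whenever $\mu$ is not slice in $Y\times I$. That reduces the question to showing that the belt circle itself is not shallow slice. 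For this I would try a homological or intersection-form argument: a disk for $\mu$ in $Y\times I$ glued to the cocore produces a sphere in $X$ meeting the core of $h$ once, and I would then argue that this contradicts the framing or intersection form of $X$.
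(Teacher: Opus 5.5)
Your slice half is fine: the doubled cocore is exactly the argument of Corollary~\ref{cor:dualslice}. The shallow half, however, is never proved, and for an arbitrary $2$-handle no invariant can prove it, because the knot itself can be shallow slice. Take $X=S^2\times S^2\setminus B^4$, drawn as the $0$-framed Hopf link. Then $\del X=\sthree$, and each belt circle $\mu$ is an unknot whose cocore framing equals the Seifert framing: the cocore together with a flat disk in the complementary $4$-ball is $\{pt\}\times S^2$, which has square $0$. Hence $D_{+}(\mu)$ is the unknot. Likewise $P(\mu)=P(U)$ is slice in $B^4$ for every pattern on your fallback list, since the doubled-cocore disk requires $P(U)$ to be slice. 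Boundary-summing this $X$ with any other $2$-handlebody gives examples with $\del X\neq\sthree$ in which a badly chosen handle produces a knot in a ball.

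Your obstruction routes also fail on their own terms:
\begin{itemize}
\item Theorem~\ref{thm:pairing3} only covers duals of $1/n$ surgery on knots in $\sthree$. Also, $\HFhat(Y)$ may have rank one (e.g.\ $\sthree$ or the Poincar\'e sphere), so there need be no pair of classes to compare.
\item The bound $\tau_\alpha(Y,D_{+}(\mu))\geq 1$ whenever $\mu$ is not slice in $Y\times I$ is false already in $\sthree$. By Hedden's formula, $\tau(D_{+}(K))=0$ whenever $\tau(K)\leq 0$, e.g.\ for $K=4_1$.
\item The intersection-form argument cannot show that $\mu$ is not shallow slice. In $\mathbb{C}P^2\setminus B^4$ the belt circle is an unknot, and the sphere you build simply represents the generator of $H_2$.
\end{itemize}

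The missing idea is the case split that the paper attributes to \cite{klug2021deep}: whether $\pi_1(\del X)$ is trivial. The input is the topology of $X$, not Floer homology. Belt circles are isotopic to meridians of the attaching link, and these normally generate $\pi_1(\del X)$. So if $\del X\neq\sthree$, some belt circle $\mu$ is homotopically essential, and you must choose that handle. Your argument then makes $D_{+}(\mu)$ null-homotopic and slice in $X$.

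One way to obstruct a slice disk in the collar is through $\pi_1$. Push the clasp disk of the Whitehead pattern into an adjacent collar $Y\times[1,2]$. This gives an immersed disk with one double point, whose group element is $[\mu]^{\pm1}\neq 1$. Gluing it to a putative slice disk in $Y\times[0,1]$ gives an immersed sphere with Wall self-intersection number $\pm[\mu]$. This is nonzero even modulo the trivial element, inversion and conjugation. But by the sphere theorem every class of $\pi_2(Y\times I)\cong\pi_2(Y)$ is represented by an embedded sphere (tube together copies placed at different heights). So every immersed sphere in $Y\times I$ has vanishing self-intersection number modulo the trivial element, a contradiction.

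When $\del X=\sthree$, sliceness in the collar is just sliceness in $B^4$, and you need a separate argument producing a knot slice in $X$ but not in $B^4$. Satellites of belt circles need not do this. For instance, in $S^2\times S^2\setminus B^4$ every knot is slice by Norman's theorem, so a trefoil works there.
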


The proof of Theorem \ref{theorem:integercase} requires two cases depending on whether the boundary has nontrivial $\pi_1$ or not (i.e. if it is or is not $S^{3}$). In our case, since the three manifolds are $1/n$ surgeries on a non-trivial knot, they have non-trivial $\pi_1$, which falls into the second case. In \cite{klug2021deep} they use the topology of the 4 manifold $X$ for the proof. Our proof of Theorem \ref{thm:deepslice} only relies on the computation of the $\tau_\alpha$-invariants.

One noteworthy feature of the computation in Example \ref{sec:figureeight} is that the distinct $\tau_{\alpha}$-invariants came from generators on the closed component of the immersed curve for the knot. For Theorem \ref{thm:deepslice}, we make use of the fact that its immersed curve has a curve corresponding to a box complex, manifesting as a ``figure 8'' shape in the immersed curve. Since Theorem \ref{thm:deepslice} was inspired by a computation for $1/2$ surgery on the knot $6_{1}$, we provide an example following identical steps to those in Section \ref{sec:immersedexamples}.

\begin{example}\label{ex:6_1}
    First, note that $6_{1}\subset \sthree$ is an alternating, slice knot in $B^{4}$. Therefore, $\HFKhat(6_{1})$ is determined by its Alexander polynomial, $\Delta_{6_{1}}(t)=-2t^{-1}+5-2t$, and its signature, $\sigma(6_{1})=0$. By \cite[Theorem 3.1]{ozsvath2003alternating}, $$\HFKhat(\sthree, 6_{1})=\F_{-1}^{2}\oplus\F^{5}_{0}\oplus\F_{1}^{2},$$ where the subscript denotes the Alexander grading. Using the spectral sequence from $\HFKhat(6_{1})$ to $\HFhat(\sthree)$, we can reconstruct the vertical arrows present in the $UV=0$ knot Floer complex, as in Figure \ref{fig:rebuild61}. Now, using the symmetry granted by swapping the roles of $U$ and $V$, we reconstruct horizontal arrows in $\CFKR(6_{1})$ and plot more of the generators in the $(\gru, \grv)$ plane, as in Figure \ref{fig:61complex}.

    \begin{figure}
        \centering
        \subfigure{
    \begin{tikzpicture}[scale=1.2]
	\draw[step=1, black!30!white, very thin] (-0.9, -1.9) grid (0.9, 1.9);
	\filldraw (-0.2,1) circle (2pt) node[] (a){};
    \filldraw (0.2,1) circle (2pt) node[] (b){};
    \filldraw (-0.2, 0.2) circle (2pt) node[] (c){};
    \filldraw (0.2, 0.2) circle (2pt) node[] (d){};
    \filldraw (0,0) circle (2pt) node[] (e){};
    \filldraw (-0.2, -0.2) circle (2pt) node[] (f){};
    \filldraw (0.2, -0.2) circle (2pt) node[] (g){};
    \filldraw (-0.2, -1) circle (2pt) node[] (h){};
    \filldraw (0.2, -1) circle (2pt) node[] (j){};
	\node [above left] at (a) {$a$};
	\node [above right] at (b) {$b$};
	\node [above left] at (c) {$c$};
    \node [above right] at (d) {$d$};
    \node [right] at (e) {$e$};
    \node [below left] at (f) {$f$};
    \node [below right] at (g) {$g$};
    \node [below left] at (h) {$h$};
    \node [below right] at (j) {$j$};    
    \end{tikzpicture}
    }
    \hspace{1.2in}
        \subfigure{
    \begin{tikzpicture}[scale=1.2]
	\draw[step=1, black!30!white, very thin] (-0.9, -1.9) grid (0.9, 1.9);
	\filldraw (-0.2,1) circle (2pt) node[] (a){};
    \filldraw (0.2,1) circle (2pt) node[] (b){};
    \filldraw (-0.2, 0.2) circle (2pt) node[] (c){};
    \filldraw (0.2, 0.2) circle (2pt) node[] (d){};
    \filldraw (0,0) circle (2pt) node[] (e){};
    \filldraw (-0.2, -0.2) circle (2pt) node[] (f){};
    \filldraw (0.2, -0.2) circle (2pt) node[] (g){};
    \filldraw (-0.2, -1) circle (2pt) node[] (h){};
    \filldraw (0.2, -1) circle (2pt) node[] (j){};
	\draw [very thick, <-] (c) -- (a);
	\draw [very thick, <-] (d) -- (b);
    \draw [very thick, <-] (h) -- (f);
    \draw [very thick, <-] (j) -- (g);
	\node [left] at (a) {$a$};
	\node [right] at (b) {$b$};
	\node [left] at (c) {$c$};
    \node [right] at (d) {$d$};
    \node [right] at (e) {$e$};
    \node [left] at (f) {$f$};
    \node [right] at (g) {$g$};
    \node [left] at (h) {$h$};
    \node [right] at (j) {$j$};
    \end{tikzpicture}
        }
        \caption{Left: $\HFKhat(6_{1})$ arranged by Alexander grading. Right: the location of the vertical arrows in $\CFKR(6_{1})$ so there is only one generator for $\HFhat(\sthree)$.}
        \label{fig:rebuild61}
    \end{figure}

    \begin{figure}
        \centering
        \begin{tikzpicture}[scale=2]
	\begin{scope}[thin, gray]
		\draw [-] (-1, 0) -- (1, 0);
		\draw [-] (0, -1) -- (0, 1);
	\end{scope}
	\draw[step=1, black!30!white, very thin] (-1.9, -1.9) grid (0.9, 0.9);
	\filldraw (-1.1,0.1) circle (1pt) node[] (a){};
    \filldraw (-0.9,-0.1) circle (1pt) node[] (b){};
    \filldraw (-1.1, -0.9) circle (1pt) node[] (c){};
    \filldraw (-0.9, -1.1) circle (1pt) node[] (d){};
    \filldraw (0.1,0.1) circle (1pt) node[] (e){};
    \filldraw (-0.1, 0.1) circle (1pt) node[] (f){};
    \filldraw (0.1, -0.1) circle (1pt) node[] (g){};
    \filldraw (-0.1, -0.9) circle (1pt) node[] (h){};
    \filldraw (0.1, -1.1) circle (1pt) node[] (j){};
	\draw [very thick, <-] (c) -- (a);
	\draw [very thick, <-] (d) -- (b);
    \draw [very thick, <-] (h) -- (f);
    \draw [very thick, <-] (j) -- (g);
    \draw [very thick, <-] (d) -- (j);
    \draw [very thick, <-] (c) -- (h);
    \draw [very thick, <-] (a) -- (f);
    \draw [very thick, <-] (b) -- (g);
	\node [above left] at (a) {$Ua$};
	\node [below right] at (b) {$Ub$};
	\node [left] at (c) {$Uc$};
    \node [below] at (d) {$Ud$};
    \node [above right] at (e) {$e$};
    \node [above] at (f) {$f$};
    \node [right] at (g) {$g$};
    \node [above left] at (h) {$h$};
    \node [below right] at (j) {$j$};
    \end{tikzpicture}
        \caption{Adding horizontal arrows in the plane using symmetry along $\gru=\grv$.}
        \label{fig:61complex}
    \end{figure}

    \begin{figure}
        \centering
        \includegraphics{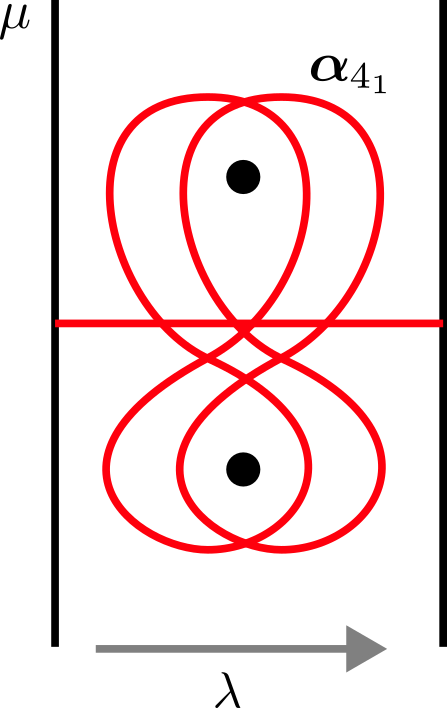}
        \caption{The immersed curve  $\alphas_{6_{1}}$ for $6_{1}$.}
        \label{fig:61immersedcurve}
    \end{figure}

    Now that we have the full information $\CFKR(6_{1})$ in a horizontally and vertically simplified basis, we can construct its immersed curve using the method in Proposition \ref{prop:makethecurve}, shown in Figure \ref{fig:61immersedcurve}. Notably, the curve resembles the curve for $4_{1}$ as in Figure \ref{fig:figureeightimmersedcurve}, with another closed component overlapping the first. In Example \ref{sec:figureeight} only the closed component is necessary to see differing $\tau_{\alpha}$-invariants, therefore we only need to keep track of one ``figure 8'' shape in the pairing diagram with the bordered Heegaard diagram for the Whitehead double, $\cH$. In this case, the local picture near generators $x_{10}$ and $x_{11}$ in Figure \ref{fig:final} looks identical (after isotoping away the other curve components) to the local picture for the pairing of $\cH$ and $\alphas_{6_{1}}$. The same two generators, then, still have $\tau_{x_{10}}\big(D_{+}((6_{1})^{*}_{1/2})\big)\neq \tau_{x_{11}}\big(D_{+}((6_{1})^{*}_{1/2})\big)$, so one is nonzero. By Corollary \ref{cor:taualpha}, $D_{+}\big((6_{1})^{*}_{1/2}\big)$ cannot be slice in $S^{3}_{1/2}(6_{1})\times I$. However, since $6_{1}$ is itself slice, Corollary \ref{cor:dualslice} implies that $D_{+}\big((6_{1})^{*}_{1/2}\big)$ is deeply slice in $W$, the contractible 4-manifold of Proposition \ref{prop:W}.
\end{example}

In finding the desired obstruction, $\tau_\alpha$, we only needed to keep track of the acyclic components of the knot's immersed curve. To prove Theorem \ref{thm:deepslice}, we generalize this concept to see the obstruction by examining how the immersed curve components can interact with the pattern curve once it is ``pulled" tight. To do so, some setup is in order.

\begin{lemma}\label{lem:acyclicbox}
    Given a slice knot with a simplified basis for its knot Floer complex, each generator in the acyclic summand of the complex has exactly one horizontal arrow and exactly one vertical arrow pointing to or away from it.
\end{lemma}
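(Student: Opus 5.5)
The plan is to work entirely with the structure of a simplified basis for $\CFKR(K)$ and use only two inputs: the definitions of vertically and horizontally simplified bases, and the fact that the total homology in each direction is rank one. First, recall that in a simplified basis each generator has \emph{at most} one vertical arrow and \emph{at most} one horizontal arrow incident to it (this is the reformulation of vertically/horizontally simplified stated after the definition). So the content of the lemma is that, inside the acyclic summand, neither of these can be absent.

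Next, I would use the homology. Setting $U=0$, i.e. looking only at vertical arrows in $\CFKR(K)$ with $V$ inverted, computes $\HFhat(\sthree)\cong\F$; in a vertically simplified basis the vertical arrows pair generators off, and exactly one generator is left unpaired. That unique generator is the one whose horizontal segment has an unattached left endpoint in Proposition \ref{prop:makethecurve}, and it lies on the non-closed (homologically essential) component of the immersed curve, not in the acyclic summand. Symmetrically, the horizontal arrows pair off all but one generator, again lying in the distinguished summand, since the acyclic summand contributes nothing to either the horizontal or vertical homology. Hence every generator $v$ of the acyclic summand is paired by a vertical arrow, and likewise by a horizontal arrow, giving exactly one of each.

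The step needing care is justifying that the acyclic summand really is acyclic in both the vertical and horizontal directions, i.e. that its vertical homology and horizontal homology vanish separately, so the unpaired generators cannot lie there. I would argue this via the splitting: a simplified basis decomposes $\CFKR(K)$ as a direct sum of the summand carrying $\tau$ (a staircase, which for slice $K$ with $\tau=\epsilon$-type considerations may be trivial, a single generator) and the remaining summands. Each vertical homology is the sum of vertical homologies of the summands, totalling rank one, and the distinguished summand already contributes rank one; hence every other summand, in particular the acyclic one, has zero vertical homology, and the same for horizontal. Sliceness is used only to know $\tau=0$ and that the distinguished summand is as simple as possible, so that no ambiguity arises about which generator carries the homology; I would note that the argument in fact only needs the summand to be a direct summand complementary to the one supporting $\HFhat(\sthree)$.
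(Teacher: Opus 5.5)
Your argument is correct and uses the same core idea as the paper. In a simplified basis each generator has at most one arrow of each type, and a generator of the acyclic summand missing a vertical (resp.\ horizontal) arrow would survive to a copy of $\HFhat(\sthree)$, contradicting acyclicity. The only difference is in the horizontal direction: you handle it directly via the rank-one horizontal homology and the summand decomposition, whereas the paper reduces it to the vertical case through the $U\leftrightarrow V$ symmetry of the complex. Your version is slightly cleaner, since it never needs that symmetry to carry the chosen simplified basis to itself.
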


\begin{proof}
    Suppose that there exists a generator $U^kb$ in an acyclic summand with only a horizontal arrow pointing to it or away from it and no other arrows attached to it. Then $b$ is a generator of $\CFhat(\sthree)$ with no arrows pointing to or away from. Hence, it is a generator of $\HFhat(\sthree)$. This is a contradiction since we already restricted $b$ to an acyclic summand of $\CFKm(Y,K)$.

    Suppose instead that $U^kb$ had a singular vertical arrow pointing to it or away from it and no other arrows attached to it. Using the symmetry of $\CFKm(Y,K)$ along the $A=\gru$ axis means there is also a corresponding generator with only a horizontal arrow pointing to it. Then we are again in the above case and arrive at the same contradiction.
\end{proof}

\begin{figure}
     \centering
     \includegraphics[width=0.95\linewidth]{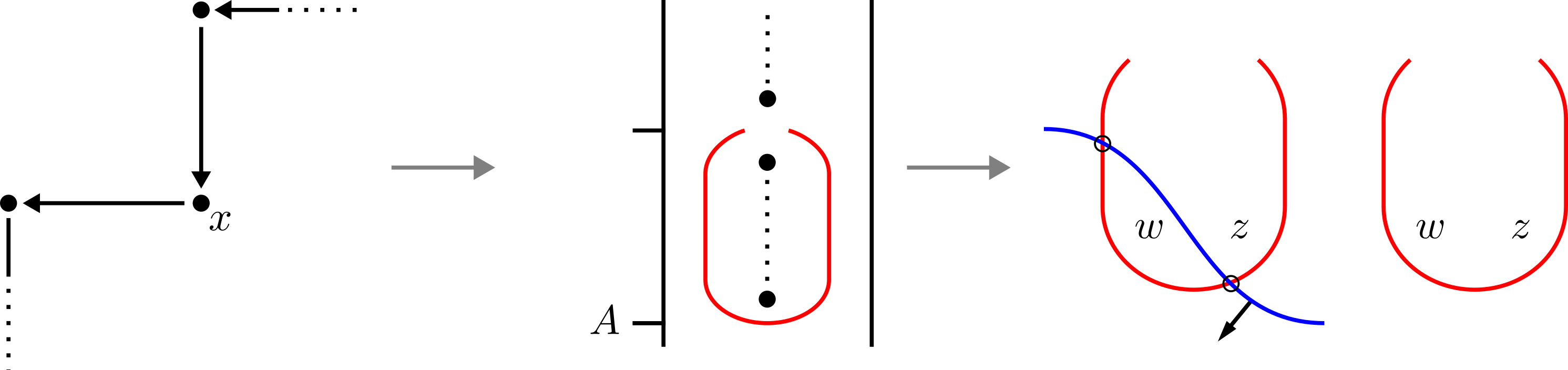}
     \caption{Left: local picture of an acyclic summand near the lowest Alexander grading. Right: corresponding local picture of the immersed Heegaard diagram in a lift of the torus.}
    \label{fig:case1}
\end{figure}

We are free to consider any acyclic summand and any lowest Alexander grading generator in that summand. From Lemma \ref{lem:acyclicbox}, we see that this generator will always have one horizontal arrow pointing away from it, and one vertical arrow pointing to it, as in Figure \ref{fig:case1} (on the left). Since it has lowest Alexander grading, it appears at the bottom of the corresponding immersed curve, and both arrows transfer to segments pointing upwards, which gives the local picture in Figure \ref{fig:case1} (on the right). By symmetry of the knot Floer complex, we can also choose a highest Alexander grading generator corresponding to the lowest one we chose, which has a horizontal arrow pointing to it, and a vertical arrow pointing out of it. The corresponding local immersed curve for the highest Alexander grading generator is obtained from that of the lowest one by a 180-degree rotation around the origin.

\begin{proof}[Proof of Theorem \ref{thm:deepslice}]
    First, note that Corollary \ref{cor:dualslice} guarantees that $\Dmu$ is slice in $W$. It remains to obstruct sliceness in a neighborhood of the boundary of $W$, $\dWxI.$ Doing so involves a process like that of Example \ref{ex:6_1}, where we use an immersed Heegaard diagram to identify generators with differing $\tau_{\alpha}$ invariants. We prove it in different cases, depending on the surgery coefficient.\\
    
    \textbf{Case 1: $n>1$.}\\
    For this case, the generators we use, which survive in the infinite page of the spectral sequence are the same as the ones we use in Example \ref{ex:6_1}. Notice, that before canceling any disks, the intersection points in the diagram on the far right of Figure \ref{fig:case1} are of differing Alexander grading. After pulling tight, the two highlighted points remain, separated by a disk which crosses a $w$ basepoint but not a $z$ basepoint, so it cannot be canceled. Therefore, they have different $\tau_{\alpha}$ invariants (namely, their Alexander gradings). \\
    
    \textbf{Case 2: $n=1$.}
    \begin{figure}
        \centering
        \includegraphics[scale=0.8]{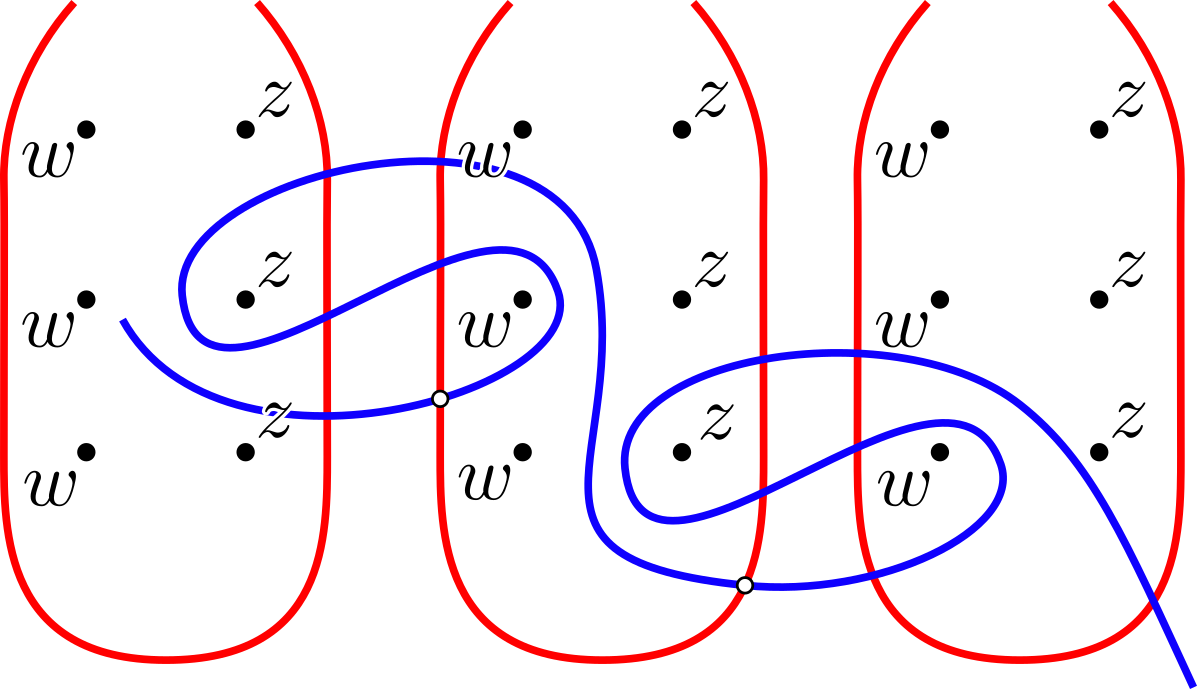}
        \caption{The immersed curve for $n=1$ case.}
        \label{fig:n=1lowest}
    \end{figure}
    
    In Figure \ref{fig:n=1lowest} we see the generators used in the previous case marked by hollow dots, however, in this case these two generators have the same Alexander grading. Instead, we will use the pair of generators from the top Alexander grading generator.

    \begin{figure}
        \centering
        \includegraphics[scale=0.8]{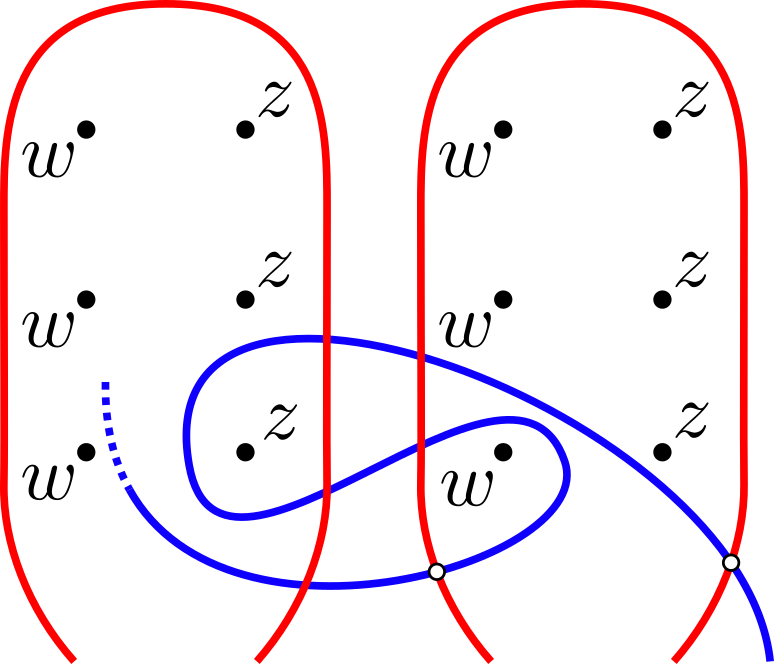}
        \caption{The local immersed curve for the top Alexander grading generator.}
        \label{fig:topAlexander}
    \end{figure}

    In Figure \ref{fig:topAlexander}, we indicate the generators of interest by hollow dots. Given there is an immersion occurring immediately below the local picture in Figure \ref{fig:topAlexander}, we apply the only isotopy available to us, which is sliding the blue curve over the $z$ basepoint enclosed by a Whitney disk to cancel the corresponding length one differential given by that disk. This leaves only the two generators marked with hollow dots. The Alexander gradings of these two generators differ by an odd number, and hence are distinct.
    
    Since we have found pairs of generators with distinct Alexander gradings in each case, it follows that the knot $\Dmu$ is deeply slice in the 4-manifold $W$ constructed in Section \ref{section:topology}.
    \end{proof}

\newpage
\clearpage

\bibliographystyle{amsalpha}
\bibliography{bib}

\end{document}